\documentclass[12pt, a4paper, reqno]{amsart}
\usepackage{amsmath}
\usepackage{amsfonts}
\usepackage{amssymb}
\usepackage{color}
\usepackage{comment,graphicx}
\usepackage[T1]{fontenc}
\usepackage{url}
\usepackage{ae}

\def\phi{\varphi}
\def\rho{\varrho}
\def\epsilon{\varepsilon}
\numberwithin{equation}{section}
\theoremstyle{plain}
\newtheorem{theorem}[equation]{Theorem}
\newtheorem{lemma}[equation]{Lemma}

\theoremstyle{definition}
\newtheorem{definition}[equation]{Definition}

\theoremstyle{remark}
\newtheorem{remark}[equation]{Remark}
\newtheorem{example}[equation]{Example}

\renewcommand{\le}{\leqslant}
\renewcommand{\ge}{\geqslant}
\renewcommand{\leq}{\leqslant}
\renewcommand{\geq}{\geqslant}

\begin{document}
	\title[Embeddings in Herz-type Sobolev spaces on domains]{Embedding theorems for Herz-Type Sobolev spaces on domains}
	\author{Douadi Drihem}
	\address{Douadi Drihem\\
		Laboratory of Functional Analysis and Geometry of Spaces, Faculty of
		Mathematics and Informatics, Department of Mathematics,\\
		M'sila University, PO Box 166 Ichebelia, M'sila 28000, Algeria}
	\email{douadidr@yahoo.fr, douadi.drihem@univ-msila.dz}
	\date{\today }
	\maketitle

	\begin{abstract}
		In this paper, we further investigate the properties of Herz-type Sobolev spaces on domains, which unify and generalize the classical Sobolev spaces. In particular, we establish several Sobolev-type embedding theorems for these spaces. Moreover, by means of a suitable example, we show that some of the assumptions imposed on the parameters are optimal.
		
		\textit{MSC 2010\/}: 46E35.
		
		\textit{Key Words and Phrases}: Herz spaces, Sobolev spaces, Embeddings.
	\end{abstract}
\section{Introduction}

Sobolev spaces are among the most important function spaces in modern
analysis, with fundamental applications to harmonic analysis and partial
differential equations. We refer the reader to the monographs \cite%
{AdamsFournier03}, \cite{Burenkov}, \cite{Ma85} and \cite{Tartar} for
comprehensive accounts, historical remarks, and further references
concerning Sobolev spaces.

Herz spaces, introduced in \cite{Herz68}, also play an important role in harmonic analysis, particularly in the study of singular integral operators and Fourier multipliers. Interest in these spaces stems not only from their theoretical significance but also from their broad range of applications in analysis; see, for example, \cite{BS85}, \cite{Dr-Herz-Heat}, \cite{Fi-We08}, \cite{Rag09}, and \cite{Tu11}. For recent developments and further important results concerning Herz spaces, we refer the reader to \cite{RS}, \cite{ZYZ}, and the monograph \cite{LYH}.

Motivated by Sobolev and Herz spaces, the author in \cite{Dr-Sobolev} introduced the so-called Herz-type Sobolev spaces on domains and established several fundamental properties of these spaces, including Sobolev embedding theorems. Similar function spaces over $\mathbb{R}^{n}$ were previously introduced by Lu and Yang in \cite{LuYang97}, where applications to partial differential equations were also provided.

In this paper, we present further results extending those obtained in \cite{Dr-Sobolev}. In particular, the Sobolev embedding theorems established in \cite{Dr-Sobolev} were proved under certain additional assumptions. We improve these embedding results and discuss the optimality of the corresponding conditions. Furthermore, we establish several new embedding theorems for Herz-type Sobolev spaces.

The paper is organized as follows. In Section 2, we present the necessary preliminaries, fix the notation, and recall some basic facts concerning Herz spaces, including interpolation inequalities and density results.

In Section 3, we establish the fundamental properties of Herz-type Sobolev spaces, in analogy with the classical Sobolev spaces. In Section 4, we develop the main technical tools required for the proofs of our principal results. Finally, in Section 5, we prove Sobolev embedding theorems for Herz-type Sobolev spaces. We also present a suitable example showing that some of the assumptions on the parameters are optimal.

We believe that the theory of Herz-type Sobolev spaces is still at an early stage of development. We hope that further properties and applications of these spaces will be investigated in future work.

The results established in this paper can be naturally extended to Lorentz-Herz-type Sobolev spaces by replacing the Lebesgue spaces in the definition of Herz spaces with the corresponding Lorentz spaces. However, to avoid unnecessary technical complications and for the convenience of the reader, we restrict our exposition to Herz-type Sobolev spaces.

\subsection{Notation and conventions}

As usual, $\mathbb{R}^{n}$ denotes the $n$-dimensional Euclidean space, $%
\mathbb{N}$ the set of all natural numbers, and $\mathbb{N}_{0}=\mathbb{N}%
\cup \{0\}$. The symbol $\mathbb{Z}$ stands for the set of all integers. For 
$u>0$, we define 
$
R(u)=\left\{ x\in \mathbb{R}^{n}:\frac{u}{2}\leq |x|<u\right\}.$
For $x\in \mathbb{R}^{n}$ and $r>0$, we denote by $B(x,r)$ the open ball in $%
\mathbb{R}^{n}$ centered at $x$ with radius $r$. 
If $1\leq p<\infty$ and 
$\frac{1}{p}+\frac{1}{p^{\prime }}=1,$
then $p^{\prime }$ is called the conjugate exponent of $p$.

\medskip

We denote by $|\Omega|$ the $n$-dimensional Lebesgue measure of a measurable
set $\Omega \subseteq \mathbb{R}^{n}$. For any measurable set $\Omega
\subseteq \mathbb{R}^{n}$, the Lebesgue space $L^{p}(\Omega)$, $0<p\leq
\infty$, consists of all measurable functions $f$ such that 
\begin{equation*}
\|f\|_{L^{p}(\Omega)} =\left( \int_{\Omega}|f(x)|^{p}\,dx
\right)^{1/p}<\infty, \qquad 0<p<\infty,
\end{equation*}
and 
\begin{equation*}
\big\Vert f\big\Vert_{L^{\infty }(\Omega )}=\underset{x\in \Omega }{\text{%
		ess-sup}}\left\vert f(x)\right\vert <\infty .
\end{equation*}
If $\Omega=\mathbb{R}^{n}$, we simply write 
\begin{equation*}
\|f\|_{L^{p}(\mathbb{R}^{n})}=\|f\|_{p}. 
\end{equation*}

\medskip

If $G\subset \mathbb{R}^{n}$ is nonempty, we denote by $\overline{G}$ the
closure of $G$ in $\mathbb{R}^{n}$. Let $\Omega \subseteq \mathbb{R}^{n}$ be
open. We write $G\Subset \Omega$ if $\overline{G}\subset \Omega$ and $G$ is
compact in $\mathbb{R}^{n}$. For any nonnegative integer $m$, let $%
C^{m}(\Omega)$ denote the vector space of all functions $f$ such that $f$
and all their partial derivatives $D^{\beta}f$ of order $|\beta|\leq m$ are
continuous on $\Omega$. We set 
\begin{equation*}
C^{0}(\Omega)=C(\Omega), \qquad C^{\infty}(\Omega)=\bigcap_{m\geq 0}
C^{m}(\Omega).
\end{equation*}
Finally, we denote by $C_{c}(\Omega)$ the space of all continuous functions
on $\Omega$ with compact support in $\Omega$.

\section{Herz spaces}

In this section, we present some fundamental properties of Herz spaces. Let $%
k\in \mathbb{Z}$. For convenience, we set 
\begin{equation*}
B_{k}=B(0,2^{k}), \qquad R_{k}=B_{k}\setminus B_{k-1}.
\end{equation*}
We denote by $\chi_{k}$ the characteristic function of the set $R_{k}$.

\begin{definition}
	\label{def-inh-Herz} Let $\alpha \in \mathbb{R}$ and $1\leq p,q\leq \infty$.
	
	\medskip
	
	\noindent \textnormal{(i)} The homogeneous Herz space $\dot{K}%
	_{p}^{\alpha,q}(\mathbb{R}^{n})$ is defined as the set of all functions $%
	f\in L_{\mathrm{loc}}^{p}(\mathbb{R}^{n}\setminus\{0\})$ such that 
	\begin{equation*}
	\|f\|_{\dot{K}_{p}^{\alpha,q}(\mathbb{R}^{n})} = \left(
	\sum_{k=-\infty}^{\infty} 2^{k\alpha q} \|f\chi_{k}\|_{p}^{q} \right)^{1/q}
	<\infty,
	\end{equation*}
	with the usual modifications when $p=\infty$ and/or $q=\infty$.
	
	\medskip
	
	\noindent \textnormal{(ii)} The nonhomogeneous Herz space $K_{p}^{\alpha,q}(%
	\mathbb{R}^{n})$ is defined as the set of all functions $f\in L_{\mathrm{loc}%
	}^{p}(\mathbb{R}^{n})$ such that 
	\begin{equation*}
	\|f\|_{K_{p}^{\alpha,q}(\mathbb{R}^{n})} = \|f\chi_{B_{0}}\|_{p} + \left(
	\sum_{k=1}^{\infty} 2^{k\alpha q} \|f\chi_{k}\|_{p}^{q} \right)^{1/q}
	<\infty,
	\end{equation*}
	with the usual modifications when $p=\infty$ and/or $q=\infty$.
\end{definition}

\begin{remark}
	The spaces $K_{p}^{\alpha,q}(\mathbb{R}^{n})$ and $\dot{K}_{p}^{\alpha,q}(%
	\mathbb{R}^{n})$ are Banach spaces. If $\alpha=0$ and $1\leq p=q\leq \infty$%
	, then both $\dot{K}_{p}^{0,p}(\mathbb{R}^{n})$ and $K_{p}^{0,p}(\mathbb{R}%
	^{n})$ coincide with the Lebesgue space $L^{p}(\mathbb{R}^{n})$. Moreover, 
	\begin{equation*}
	\dot{K}_{p}^{\alpha,p}(\mathbb{R}^{n}) = L^{p}(\mathbb{R}^{n},|\cdot|^{%
		\alpha p}),
	\end{equation*}
	that is, the weighted Lebesgue space endowed with the norm 
	\begin{equation*}
	\|f\|_{L^{p}(\mathbb{R}^{n},|\cdot|^{\alpha p})} = \left( \int_{\mathbb{R}%
		^{n}} |f(x)|^{p}|x|^{\alpha p}\,dx \right)^{1/p}.
	\end{equation*}
	
	Let $0<p,q\leq \infty$ and $\alpha>0$. Then 
	\begin{equation*}
	\dot{K}_{p}^{\alpha,q}(\mathbb{R}^{n})\cap L^{p}(\mathbb{R}^{n}) =
	K_{p}^{\alpha,q}(\mathbb{R}^{n}),
	\end{equation*}
	in the sense of equivalent quasi-norms. Furthermore, 
	\begin{equation*}
	\|f\|_{K_{p}^{\alpha,q}(\mathbb{R}^{n})} \approx \left( \sum_{k=0}^{\infty}
	2^{k\alpha q} \|f\chi_{\widehat{R}_{k}}\|_{p}^{q} \right)^{1/q},
	\end{equation*}
	where $\widehat{R}_{k}=R_{k}$ for $k\in \mathbb{N}$ and $\widehat{R}%
	_{0}=B_{0}$.
	
	Let $0<p\leq \infty$, $0<q_{1}\leq q_{2}\leq \infty$, and $\alpha\in\mathbb{R}$.
	Then the following continuous embeddings hold:
	\begin{equation}
	\dot{K}_{p}^{\alpha,q_{1}}(\mathbb{R}^{n})
	\hookrightarrow
	\dot{K}_{p}^{\alpha,q_{2}}(\mathbb{R}^{n}),
	\qquad
	K_{p}^{\alpha,q_{1}}(\mathbb{R}^{n})
	\hookrightarrow
	K_{p}^{\alpha,q_{2}}(\mathbb{R}^{n}).
	\label{herz-emb}
	\end{equation}
	
	Moreover, if $\alpha_{2}\leq \alpha_{1}$ and $0<q\leq \infty$, then
	\begin{equation}
	K_{p}^{\alpha_{1},q}(\mathbb{R}^{n})
	\hookrightarrow
	K_{p}^{\alpha_{2},q}(\mathbb{R}^{n}).
	\label{herz-emb1}
	\end{equation}
\end{remark}

A detailed discussion of the properties of these spaces may be found in the
monograph \cite{LYH08}, the papers \cite{LuYang1.95}, \cite{LuYang2.95}, 
\cite{Hernandez1998}, and the references therein.

If $\Omega \subset \mathbb{R}^{n}$ is open and $f:\Omega \rightarrow \mathbb{%
	R}$ is measurable, then we write $f\in \dot{K}_{p}^{\alpha,q}(\Omega)$
whenever $f\chi_{\Omega}\in \dot{K}_{p}^{\alpha,q}(\mathbb{R}^{n})$, and we
define 
\begin{equation*}
\|f\|_{\dot{K}_{p}^{\alpha,q}(\Omega)} = \|f\chi_{\Omega}\|_{\dot{K}%
	_{p}^{\alpha,q}(\mathbb{R}^{n})}.
\end{equation*}
Similarly, we define 
\begin{equation*}
\|f\|_{K_{p}^{\alpha,q}(\Omega)} = \|f\chi_{\Omega}\|_{K_{p}^{\alpha,q}(%
	\mathbb{R}^{n})}.
\end{equation*}

Various important results have been established in the spaces $K_{p}^{\alpha
	,q}(\mathbb{R}^{n})$ and $\dot{K}_{p}^{\alpha ,q}(\mathbb{R}^{n})$ under
suitable assumptions on $\alpha $, $p$, and $q$. The conditions 
\begin{equation*}
-\frac{n}{p}<\alpha <n\left( 1-\frac{1}{p}\right) ,\qquad 1<p<\infty,
\end{equation*}%
play a crucial role in the study of the boundedness of classical operators
on $K_{p}^{\alpha ,q}(\mathbb{R}^{n})$ and $\dot{K}_{p}^{\alpha ,q}(\mathbb{R%
}^{n})$. This fact was first observed by Li and Yang \cite{LiYang96} in
their proof of the boundedness of the Hardy--Littlewood maximal operator. As
usual, we define 
\begin{equation*}
\mathcal{M}(f)(x)=\sup_{Q}\frac{1}{|Q|}\int_{Q}|f(y)|\,dy,\qquad f\in L_{%
	\mathrm{loc}}^{1}(\mathbb{R}^{n}),
\end{equation*}%
where the supremum is taken over all cubes $Q$ with sides parallel to the
coordinate axes and containing $x$. We also set 
\begin{equation*}
\mathcal{M}_{t}(f)=\big(\mathcal{M}(|f|^{t})\big)^{1/t},\qquad 0<t<\infty .
\end{equation*}

\begin{lemma}
	\label{Maximal-Inq} Let $1<p<\infty$ and $1\leq q\leq \infty$. If $f\in L_{%
		\mathrm{loc}}^{1}(\mathbb{R}^{n})$ and 
	\begin{equation*}
	-\frac{n}{p}<\alpha<n\left(1-\frac{1}{p}\right),
	\end{equation*}
	then 
	\begin{equation*}
	\|\mathcal{M}(f)\|_{\dot{K}_{p}^{\alpha,q}(\mathbb{R}^{n})} \leq c\|f\|_{%
		\dot{K}_{p}^{\alpha,q}(\mathbb{R}^{n})},
	\end{equation*}
	and 
	\begin{equation*}
	\|\mathcal{M}(f)\|_{K_{p}^{\alpha,q}(\mathbb{R}^{n})} \leq
	c\|f\|_{K_{p}^{\alpha,q}(\mathbb{R}^{n})}.
	\end{equation*}
\end{lemma}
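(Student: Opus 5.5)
We prove the homogeneous estimate; the nonhomogeneous one follows by the same argument with only the rings $k\ge 1$ and the ball $B_{0}$ (or by the equivalent norm with $\widehat{R}_{k}$). Fix $f$ and write $f=\sum_{j\in\mathbb{Z}} f_{j}$ with $f_{j}=f\chi_{j}$. By sublinearity, for each $k$,
\begin{equation*}
\|\mathcal{M}(f)\chi_{k}\|_{p}\le \sum_{j\le k-2}\|\mathcal{M}(f_{j})\chi_{k}\|_{p}+\sum_{|j-k|\le 1}\|\mathcal{M}(f_{j})\chi_{k}\|_{p}+\sum_{j\ge k+2}\|\mathcal{M}(f_{j})\chi_{k}\|_{p}.
\end{equation*}
The middle sum is bounded by $c\sum_{|j-k|\le1}\|f_{j}\|_{p}$ using the $L^{p}$-boundedness of $\mathcal{M}$ (here $1<p<\infty$ is used). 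After multiplying by $2^{k\alpha}$ and taking the $\ell^{q}$ norm, it contributes $c\|f\|_{\dot{K}_{p}^{\alpha,q}}$.

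For the far terms we use pointwise kernel-type bounds. If $j\le k-2$, $x\in R_{k}$ and $y\in R_{j}$, then any cube containing $x$ and meeting $R_{j}$ has side $\gtrsim 2^{k}$. Hence $\mathcal{M}(f_{j})(x)\le c2^{-kn}\|f_{j}\|_{1}\le c2^{-kn}2^{jn/p'}\|f_{j}\|_{p}$ by H\"older, and so
\begin{equation*}
\|\mathcal{M}(f_{j})\chi_{k}\|_{p}\le c\,2^{(j-k)n/p'}\|f_{j}\|_{p}.
\end{equation*}
If $j\ge k+2$, then similarly $\mathcal{M}(f_{j})(x)\le c2^{-jn}\|f_{j}\|_{1}\le c2^{-jn/p}\|f_{j}\|_{p}$, whence
\begin{equation*}
\|\mathcal{M}(f_{j})\chi_{k}\|_{p}\le c\,2^{(k-j)n/p}\|f_{j}\|_{p}.
\end{equation*}
The main geometric check is that a cube containing $x\in R_k$ and a point of $R_j$ has measure $\gtrsim 2^{\max(j,k)n}$. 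This is elementary, since the distance between the two points is $\gtrsim 2^{\max(j,k)}$.

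Multiplying by $2^{k\alpha}$, the two far contributions become
\begin{equation*}
\sum_{j\le k-2}2^{(j-k)(n/p'-\alpha)}\,2^{j\alpha}\|f_{j}\|_{p}
\quad\text{and}\quad
\sum_{j\ge k+2}2^{(k-j)(n/p+\alpha)}\,2^{j\alpha}\|f_{j}\|_{p}.
\end{equation*}
These are discrete convolutions of the sequence $(2^{j\alpha}\|f_{j}\|_{p})_{j}$ with geometric kernels. The kernels are summable exactly when $\alpha<n(1-1/p)$ and $\alpha>-n/p$, respectively; this is the crucial use of the hypotheses. Young's inequality on $\mathbb{Z}$ (or, for $1\le q\le\infty$, Minkowski's inequality in $\ell^{q}$ applied to the shifted sequences) then bounds the $\ell^{q}$ norm of each by $c\|f\|_{\dot{K}_{p}^{\alpha,q}}$.

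For the nonhomogeneous case, the ball $B_{0}$ is treated as one block $\widehat{R}_{0}$. The same three-way splitting works, with $\|f\chi_{B_{0}}\|_{1}\le c\|f\chi_{B_{0}}\|_{p}$. The only care needed is the estimate of $\mathcal{M}(f\chi_{B_0})$ on $R_k$ for $k\ge 2$, which gives the factor $2^{-kn}$ and hence $2^{k(\alpha-n)}$. This is summable since $\alpha<n/p'<n$.

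I expect no step to be a genuine obstacle. The place requiring care is bookkeeping the exponents so that the convergence conditions match exactly $-n/p<\alpha<n(1-1/p)$, together with the endpoint $q=\infty$, which is handled by taking suprema instead of sums.
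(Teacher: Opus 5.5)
Your proof is correct and is essentially the standard Li--Yang argument, which the paper cites rather than reproves: split into near and far rings, use $L^{p}$-boundedness of $\mathcal{M}$ on the near rings, the pointwise bound $\mathcal{M}(f_{j})(x)\lesssim 2^{-\max(j,k)n}\|f_{j}\|_{1}$ on the far rings, and Young's inequality on $\mathbb{Z}$. One small bookkeeping slip: in the nonhomogeneous case the contribution of $f\chi_{B_{0}}$ on $R_{k}$, $k\ge 2$, is $2^{k\alpha}\|\mathcal{M}(f\chi_{B_{0}})\chi_{k}\|_{p}\lesssim 2^{k(\alpha-n/p')}\|f\chi_{B_{0}}\|_{p}$, not $2^{k(\alpha-n)}\|f\chi_{B_{0}}\|_{p}$, because taking the $L^{p}(R_{k})$-norm contributes $|R_{k}|^{1/p}\approx 2^{kn/p}$; summability therefore uses exactly the hypothesis $\alpha<n/p'$, and this is just your $j\le k-2$ estimate with $j=0$.
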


Let $V_{\alpha,p,q}$ denote the set of all $(\alpha,p,q)\in \mathbb{R}\times
[1,\infty]^{2}$ such that:

\begin{itemize}
	\item $\alpha<n-\dfrac{n}{p}$, $1\leq p\leq \infty$, and $1\leq q\leq \infty$%
	;
	
	\item $\alpha=n-\dfrac{n}{p}$, $1\leq p\leq \infty$, and $q=1$.
\end{itemize}

The next lemma gives a necessary and sufficient condition on the parameters $%
\alpha$, $p$, and $q$ ensuring that 
\begin{equation*}
\langle T_{f},\varphi\rangle = \int_{\Omega}f(x)\varphi(x)\,dx, \qquad
\varphi\in \mathcal{D}(\Omega),
\end{equation*}
defines a regular distribution $T_{f}\in \mathcal{D}^{\prime }(\Omega)$ for
every $f\in \dot{K}_{p}^{\alpha,q}(\Omega)$.

\begin{lemma}
	\label{L1loc} Let $\Omega \subset \mathbb{R}^{n}$ be open with $0\in \Omega$%
	, and let $1\leq p,q\leq \infty$ and $\alpha\in \mathbb{R}$. Then 
	\begin{equation*}
	\dot{K}_{p}^{\alpha,q}(\Omega) \hookrightarrow L_{\mathrm{loc}}^{1}(\Omega)
	\end{equation*}
	if and only if $(\alpha,p,q)\in V_{\alpha,p,q}$.
\end{lemma}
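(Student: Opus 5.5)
We prove sufficiency by a dyadic Hölder argument and necessity by explicit counterexamples concentrated near the origin. The only point of $\Omega$ where membership in $\dot{K}_{p}^{\alpha,q}(\Omega)$ fails to give local $L^p$ control is $0$. Away from the origin, a compact set $K\Subset\Omega\setminus\{0\}$ meets only finitely many annuli $R_k$. Hence $\|f\|_{L^1(K)}\le \sum_{k}|R_k|^{1/p'}\|f\chi_k\|_p$, a finite sum, and each term is bounded by $2^{-k\alpha}\|f\|_{\dot{K}_{p}^{\alpha,q}(\Omega)}$. So everything reduces to integrability on a small ball $B_{k_0}\subset\Omega$ around $0$, and it suffices to study $\sum_{k\le k_0}\|f\chi_k\|_1$.

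\emph{Sufficiency.} By Hölder on each annulus, $\|f\chi_k\|_1\le c\,2^{kn(1-1/p)}\|f\chi_k\|_p$. Therefore
\begin{equation*}
\sum_{k\le k_0}\|f\chi_k\|_1\le c\sum_{k\le k_0}2^{k(n-n/p-\alpha)}\,2^{k\alpha}\|f\chi_k\|_p .
\end{equation*}
If $\alpha<n-n/p$, the weights $2^{k(n-n/p-\alpha)}$ are summable over $k\le k_0$, so Hölder in $k$ (the $\ell^{q'}$ norm of the weights times the $\ell^q$ norm) bounds the sum by $c\|f\|_{\dot{K}_{p}^{\alpha,q}}$ for every $q\in[1,\infty]$. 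If $\alpha=n-n/p$ and $q=1$, the weights equal $1$ and the sum is directly dominated by the $\dot{K}_{p}^{\alpha,1}$ norm. This gives the embedding, with constants depending on the compact set.

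\emph{Necessity.} Assume $(\alpha,p,q)\notin V_{\alpha,p,q}$. We build $f\in\dot{K}_{p}^{\alpha,q}(\Omega)$ supported in $B_{k_0}$ with $f\notin L^1(B_{k_0})$. Take $f=\sum_{k\le k_0}a_k\chi_k$ with $a_k\ge 0$ constant on each annulus; then equality holds in Hölder. Indeed $\|f\chi_k\|_p\approx a_k2^{kn/p}$ and $\|f\chi_k\|_1\approx a_k 2^{kn}$. Write $b_k=a_k2^{k(n/p+\alpha)}$, so that $\|f\|_{\dot K}\approx\|(b_k)\|_{\ell^q}$ and $\|f\|_{L^1}\approx\sum_k b_k2^{k(n-n/p-\alpha)}$.
\begin{itemize}
\item If $\alpha>n-n/p$, the factor $2^{k(n-n/p-\alpha)}$ blows up as $k\to-\infty$. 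Taking $b_k=2^{-k\epsilon}$ with a small $\epsilon>0$ (or simply $b_k=1$ when $q=\infty$) and choosing the decay suitably yields an $\ell^q$ sequence with divergent weighted sum. For instance, $f=|x|^{-n}$ near $0$ has $b_k\approx 2^{k(\alpha-n+n/p)}\to0$ geometrically, so $f\in\dot{K}^{\alpha,q}_p$ for every $q$, yet $f\notin L^1$.
\item If $\alpha=n-n/p$ and $q>1$, we need $(b_k)\in\ell^q\setminus\ell^1$, for example $b_k=1/|k|$ for $k\le -2$.
\end{itemize}
The case $p=\infty$ is handled identically, with $n/p=0$.

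\emph{Main obstacle.} The delicate case is the endpoint $\alpha=n-n/p$, where necessity relies on the duality $\ell^q\not\subset\ell^1$ for $q>1$. To make this rigorous we should also check that the failure of $L^1_{\mathrm{loc}}$ is genuine, that is, that $T_f$ cannot be defined. Since $f\ge0$, testing against a $\varphi\ge0$ with $\varphi=1$ near $0$ gives $\int f\varphi=\infty$, which settles this.
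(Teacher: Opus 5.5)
Your proof is correct and follows the standard route; the paper itself gives no proof and only cites \cite{Dr-Sobolev}. For sufficiency you use H\"older on each annulus $R_k$ and then H\"older in $k$, and for necessity you use the radial test functions $|x|^{-n}\chi_{B_{k_0}}$ and $\sum_{k\le k_0}|k|^{-1}2^{-kn}\chi_k$, with $B_{k_0}\subset\Omega$ and $k_0\le -2$.

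There is one harmless slip. Since $k\to-\infty$, the sequence you want when $\alpha>n-\frac{n}{p}$ is $b_k=2^{k\epsilon}$ with $0<\epsilon\le\alpha-n+\frac{n}{p}$, not $b_k=2^{-k\epsilon}$, which is unbounded; your explicit example $|x|^{-n}$ already settles that case anyway.
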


\begin{remark}
	\label{L1loc1}
	For the proof of Lemma \ref{L1loc}, see \cite{Dr-Sobolev}. Let $\Omega
	\subset \mathbb{R}^{n}$ be open, $1\leq p,q\leq \infty$, and $\alpha\in 
	\mathbb{R}$. Then clearly 
	\begin{equation*}
	K_{p}^{\alpha,q}(\Omega) \hookrightarrow L_{\mathrm{loc}}^{1}(\Omega).
	\end{equation*}
\end{remark}

\begin{remark}
	In general, if $0\notin \Omega$, then the set $V_{\alpha,p,q}$ is not
	optimal. Consequently, Lemma \ref{L1loc} and Remark 	\ref{L1loc1} justify the definition of weak derivatives for functions belonging to Herz spaces.
\end{remark}

\begin{theorem}
	\label{dense} Let $\Omega \subset \mathbb{R}^{n}$ be an open set, $1\leq
	p<\infty$, and $1\leq q<\infty$.
	
	\medskip
	
	\noindent \textnormal{(i)} If $\alpha>-\dfrac{n}{p}$, then $C_{c}(\Omega)$
	is dense in $\dot{K}_{p}^{\alpha,q}(\Omega)$.
	
	\medskip
	
	\noindent \textnormal{(ii)} If $\alpha\in \mathbb{R}$, then $C_{c}(\Omega)$
	is dense in $K_{p}^{\alpha,q}(\Omega)$.
\end{theorem}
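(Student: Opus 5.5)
We prove density of $C_{c}(\Omega)$ in two reductions: first truncate $f$ to finitely many dyadic annuli (using $q<\infty$), then approximate the truncated function by continuous compactly supported functions in $L^{p}$ (using $p<\infty$). Extending by zero, we identify $f\in \dot{K}_{p}^{\alpha,q}(\Omega)$ with $f\chi_{\Omega}\in \dot{K}_{p}^{\alpha,q}(\mathbb{R}^{n})$. All approximants will be supported in compact subsets of $\Omega\setminus\{0\}$ (resp. of $\Omega$), so they belong to $C_{c}(\Omega)$.

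\textbf{Step 1 (truncation).} For $N\in\mathbb{N}$ set
\begin{equation*}
f_{N}=f\chi_{\Omega}\chi_{\{2^{-N}\leq |x|<2^{N}\}}=\sum_{k=-N+1}^{N}f\chi_{\Omega}\chi_{k}.
\end{equation*}
Then
\begin{equation*}
\|f-f_{N}\|_{\dot{K}_{p}^{\alpha,q}(\Omega)}^{q}=\sum_{|k|\text{ large}}2^{k\alpha q}\|f\chi_{\Omega}\chi_{k}\|_{p}^{q},
\end{equation*}
which is the tail of a convergent series since $q<\infty$, so it tends to $0$. In case (ii) we instead set $f_{N}=f\chi_{\Omega}\chi_{B_{N}}$. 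The norm contains $\|f\chi_{B_{0}}\|_{p}$, so nothing near the origin needs to be removed, and the same tail argument applies for every $\alpha\in\mathbb{R}$.

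\textbf{Step 2 (localizing inside $\Omega$).} The function $f_{N}$ lies in $L^{p}$ and is supported in the bounded set $\Omega\cap\{2^{-N}\leq|x|<2^{N}\}$ (resp. $\Omega\cap B_{N}$). Take compact sets $K_{j}\uparrow \Omega$. By dominated convergence in $L^{p}$ ($p<\infty$), $\|f_{N}-f_{N}\chi_{K_{j}}\|_{p}\to 0$. Now use Lusin's theorem, or the density of $C_{c}(U)$ in $L^{p}(U)$ for the open set $U=\Omega\cap\{2^{-N-1}<|x|<2^{N+1}\}$ (resp. $U=\Omega\cap B_{N+1}$). This gives $g\in C_{c}(U)\subset C_{c}(\Omega)$ with $\|f_{N}-g\|_{p}<\varepsilon$.

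\textbf{Step 3 (passing from $L^{p}$ to the Herz norm).} Because $g$ and $f_{N}$ are supported in $U$, only the annuli $R_{k}$ with $-N\leq k\leq N+1$ (resp. $k\leq N+1$, together with $B_{0}$) contribute to the Herz norm of $f_{N}-g$. Hence
\begin{equation*}
\|f_{N}-g\|_{\dot{K}_{p}^{\alpha,q}}\leq \Big(\sum_{k=-N}^{N+1}2^{k\alpha q}\Big)^{1/q}\|f_{N}-g\|_{p}\leq C_{N,\alpha}\,\varepsilon.
\end{equation*}
The inhomogeneous norm satisfies the analogous bound $\|f_{N}-g\|_{K_{p}^{\alpha,q}}\leq C_{N,\alpha}\,\varepsilon$. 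Choosing $N$ first and then $\varepsilon$ completes the proof.

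The main subtlety is the role of $\alpha>-n/p$ in (i). Our construction does not use this hypothesis for approximation; it is needed so that $C_{c}(\Omega)$ is actually contained in $\dot{K}_{p}^{\alpha,q}(\Omega)$ when $0\in\overline{\Omega}$. Indeed, for $g\in C_{c}(\Omega)$ we have $\|g\chi_{k}\|_{p}\lesssim 2^{kn/p}$, so the sum over $k\to-\infty$ converges exactly when $\alpha+n/p>0$. We will verify this containment first, so that the statement makes sense, and then run the truncation argument above.
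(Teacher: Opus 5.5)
Your proof is correct and complete. There is no in-paper argument to compare it against: for (i) the paper only refers to the author's earlier work on Herz-type Sobolev spaces, and for (ii) it says the proof is similar. Your argument therefore works as a self-contained proof of both parts. It runs as follows: truncate to finitely many annuli using $q<\infty$, approximate in $L^{p}(U)$ using $p<\infty$, and bound the Herz norm of $f_{N}-g$ by $\bigl(\sum_{k=-N}^{N+1}2^{k\alpha q}\bigr)^{1/q}\|f_{N}-g\|_{p}$.

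Because you choose $U$ to avoid a neighbourhood of the origin, your approximants lie in $C_{c}(\Omega\setminus\{0\})$ and the approximation step works for every $\alpha\in\mathbb{R}$. So you actually prove the slightly stronger fact that $C_{c}(\Omega\setminus\{0\})$ is dense in $\dot{K}_{p}^{\alpha,q}(\Omega)$ for all $\alpha$. The hypothesis $\alpha>-\frac{n}{p}$ enters only through the inclusion $C_{c}(\Omega)\subset\dot{K}_{p}^{\alpha,q}(\Omega)$, exactly as you say.

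One small correction to your closing remark: that inclusion can fail only when $0\in\Omega$, not merely when $0\in\overline{\Omega}$. If $0\in\partial\Omega\setminus\Omega$, the support of any $g\in C_{c}(\Omega)$ is a compact set not containing $0$. Hence $g$ vanishes near the origin and belongs to $\dot{K}_{p}^{\alpha,q}(\Omega)$ for every $\alpha$. Likewise, your ``exactly when'' requires a $g$ with $g(0)\neq 0$, which needs $0\in\Omega$.

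Finally, the detour through compact sets $K_{j}\uparrow\Omega$ in Step 2 is unnecessary, since the density of $C_{c}(U)$ in $L^{p}(U)$ already does the job.
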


\begin{proof}
	The proof of \textnormal{(i)} is given in \cite{Dr-Sobolev}. Assertion 
	\textnormal{(ii)} can be proved similarly.
\end{proof}

We now present an interpolation inequality for Herz spaces.

\begin{lemma}
	\label{interpolation2} Let $\Omega \subset \mathbb{R}^{n}$ be an open set, $%
	0<p_{0},p_{1},q_{0},q_{1}\leq \infty$, and $\alpha_{0},\alpha_{1}\in \mathbb{%
		R}$. Define 
	\begin{equation*}
	\alpha = (1-\theta)\alpha_{0} + \theta\alpha_{1}, \qquad \frac{1}{p} = \frac{%
		1-\theta}{p_{0}} + \frac{\theta}{p_{1}}, \qquad \frac{1}{q} = \frac{1-\theta%
	}{q_{0}} + \frac{\theta}{q_{1}}.
	\end{equation*}
	Then the interpolation inequalities 
	\begin{equation}
	\|f\|_{\dot{K}_{p}^{\alpha,q}(\Omega)} \leq \|f\|_{\dot{K}%
		_{p_{0}}^{\alpha_{0},q_{0}}(\Omega)}^{1-\theta} \|f\|_{\dot{K}%
		_{p_{1}}^{\alpha_{1},q_{1}}(\Omega)}^{\theta}  \label{Interpolation}
	\end{equation}
	hold for all $f\in \dot{K}_{p_{0}}^{\alpha_{0},q_{0}}(\Omega) \cap \dot{K}%
	_{p_{1}}^{\alpha_{1},q_{1}}(\Omega)$, and 
	\begin{equation}
	\|f\|_{K_{p}^{\alpha,q}(\Omega)} \leq
	\|f\|_{K_{p_{0}}^{\alpha_{0},q_{0}}(\Omega)}^{1-\theta}
	\|f\|_{K_{p_{1}}^{\alpha_{1},q_{1}}(\Omega)}^{\theta}  \label{Interpolation1}
	\end{equation}
	hold for all $f\in K_{p_{0}}^{\alpha_{0},q_{0}}(\Omega) \cap
	K_{p_{1}}^{\alpha_{1},q_{1}}(\Omega)$.
\end{lemma}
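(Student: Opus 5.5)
The plan is to apply Hölder's inequality twice: once inside each dyadic annulus, in the Lebesgue norm, and once more for the sum over $k$, in the sequence norm. Put $g=f\chi_{\Omega}$. For each $k\in\mathbb{Z}$, the classical Lebesgue interpolation inequality (Hölder with exponents $\frac{p_0}{(1-\theta)p}$ and $\frac{p_1}{\theta p}$, applied to $|g\chi_k|^{(1-\theta)p}|g\chi_k|^{\theta p}$) gives
\begin{equation*}
\|g\chi_{k}\|_{p}\leq \|g\chi_{k}\|_{p_{0}}^{1-\theta}\|g\chi_{k}\|_{p_{1}}^{\theta}.
\end{equation*}
This holds for all $0<p_0,p_1\leq\infty$, with the obvious reading when some exponent is infinite. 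We may assume $0<\theta<1$, since the endpoint cases are trivial.

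Next we insert the weights. Because $\alpha=(1-\theta)\alpha_0+\theta\alpha_1$, we have $2^{k\alpha}=(2^{k\alpha_0})^{1-\theta}(2^{k\alpha_1})^{\theta}$. Setting $a_k=2^{k\alpha_0}\|g\chi_k\|_{p_0}$ and $b_k=2^{k\alpha_1}\|g\chi_k\|_{p_1}$, we obtain
\begin{equation*}
2^{k\alpha}\|g\chi_k\|_p\leq a_k^{1-\theta}b_k^{\theta}.
\end{equation*}
Hölder's inequality in $\ell^q$, with the same exponents $\frac{q_0}{(1-\theta)q}$ and $\frac{q_1}{\theta q}$, then yields $\|(a_k^{1-\theta}b_k^{\theta})\|_{\ell^q}\leq\|(a_k)\|_{\ell^{q_0}}^{1-\theta}\|(b_k)\|_{\ell^{q_1}}^{\theta}$. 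This is exactly \eqref{Interpolation}. The cases $q_0=\infty$ or $q_1=\infty$ follow by pulling out the supremum; for instance, if $q_1=\infty$ then $q=q_0/(1-\theta)$ and the bound is immediate. The quasi-Banach range ($p$ or $q$ below $1$) causes no trouble, since Hölder's inequality holds for all positive exponents.

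For \eqref{Interpolation1} the naive approach fails. The norm $\|g\chi_{B_0}\|_p+(\sum_{k\ge1}\cdots)^{1/q}$ is a sum of two terms, and $x_0^{1-\theta}y_0^{\theta}+X^{1-\theta}Y^{\theta}\leq (x_0+X)^{1-\theta}(y_0+Y)^{\theta}$ does hold, but by Hölder for two-term sums. Concretely, with $x=(x_0,X)$ and $y=(y_0,Y)$, Hölder in $\ell^1$ of two terms with exponents $\frac{1}{1-\theta}$ and $\frac1\theta$ applied to $x_0^{1-\theta}y_0^\theta$ and $X^{1-\theta}Y^\theta$ gives exactly this inequality. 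So the plan is as follows. First, bound the local term, $\|g\chi_{B_0}\|_p\leq\|g\chi_{B_0}\|_{p_0}^{1-\theta}\|g\chi_{B_0}\|_{p_1}^{\theta}$, by Lebesgue interpolation. Second, bound the tail sum over $k\geq1$ as in the homogeneous case. Third, combine the two bounds with the two-term Hölder inequality above.

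The only point I expect to need care is this last combination step together with the bookkeeping of the infinite exponents. Everything else is a direct application of Hölder's inequality. No constant appears, which matches the statement.
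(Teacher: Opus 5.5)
Your proof is correct and is the standard double-H\"older argument: H\"older in $L^{p}$ on each annulus, then H\"older in $\ell^{q}$ over $k$. The paper states this lemma without proof, so this is evidently the intended route, and your two-term H\"older step $x_0^{1-\theta}y_0^{\theta}+X^{1-\theta}Y^{\theta}\le (x_0+X)^{1-\theta}(y_0+Y)^{\theta}$ is exactly what keeps the constant equal to $1$ for the nonhomogeneous norm as defined (local term plus tail); so, despite your phrasing, nothing actually fails there.
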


The aim of the remainder of this section is to present equivalent norms for
Herz spaces; see \cite{Fle74}.

\begin{theorem}
	\label{equivalent1} Let $0<p,q\leq \infty$ and $\alpha\in \mathbb{R}$. Then 
	\begin{equation*}
	\|f\|_{\dot{K}_{p}^{\alpha,q}}^{\bullet} = \left( \int_{0}^{\infty}
	t^{\alpha q} \left( \int_{R(2t)}|f(x)|^{p}\,dx \right)^{q/p} \frac{dt%
	}{t} \right)^{1/q}
	\end{equation*}
	defines an equivalent quasi-norm on $\dot{K}_{p}^{\alpha,q}$, with the usual
	modifications when $p=\infty$ and/or $q=\infty$.
	
	Moreover, 
	\begin{equation*}
	\|f\|_{K_{p}^{\alpha,q}}^{\bullet} = \|f\chi_{B_{0}}\|_{p} + \left(
	\int_{1}^{\infty} t^{\alpha q} \left( \int_{R(2t)}|f(x)|^{p}\,dx
	\right)^{q/p} \frac{dt}{t} \right)^{1/q}
	\end{equation*}
	defines an equivalent quasi-norm on $K_{p}^{\alpha,q}$, again with the usual
	modifications when $p=\infty$ and/or $q=\infty$.
\end{theorem}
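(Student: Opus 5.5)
The plan is to compare the integral over $t$ with the dyadic sum by splitting $(0,\infty)$ into dyadic intervals $[2^{k-1},2^{k})$, $k\in\mathbb{Z}$. For $t$ in such an interval, the annulus $R(2t)=\{t\le |x|<2t\}$ always lies in $R_{k}\cup R_{k+1}$, since $2^{k-1}\le t$ and $2t<2^{k+1}$. In the other direction, $R_{k}=\{2^{k-1}\le|x|<2^{k}\}$ is covered by the union of $R(2t)$ over $t\in[2^{k-2},2^{k-1})$: if $2^{k-1}\le|x|<2^k$, then $t=2^{k-2}$ gives $[2^{k-2},2^{k-1})$, which misses $x$. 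It is cleaner to observe that two fixed values suffice, namely $R(2^{k})\cup R(2^{k+1})\supset R_{k}$ after adjusting indices. For a continuum-of-$t$ statement, however, I will use monotonicity in the following form.

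\textbf{Upper bound for the integral.} For $t\in[2^{k-1},2^k)$ we have
\begin{equation*}
\int_{R(2t)}|f|^p\,dx\le \|f\chi_k\|_p^p+\|f\chi_{k+1}\|_p^p ,
\end{equation*}
and $t^{\alpha q}\approx 2^{k\alpha q}$ with constants depending only on $\alpha,q$. The interval has $dt/t$-measure $\log 2$. Using the quasi-triangle inequality $(a+b)^{q/p}\le c(a^{q/p}+b^{q/p})$, summing over $k$, and shifting the index in the second term gives $\|f\|^{\bullet}\le c\|f\|_{\dot K_p^{\alpha,q}}$.

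\textbf{Lower bound.} Fix $k$. For every $t\in[2^{k-1},2^{k-1}\cdot\sqrt2)$ and every $s$ with $s\in[2^{k-2}\sqrt2,2^{k-1})$, the union $R(2t)\cup R(2s)=[t,2t)\cup[s,2s)$ (in $|x|$) covers $[2^{k-1},2^k)$. This holds because $s<2^{k-1}\le t<2s$ and $2t\ge 2^k$. Hence
\begin{equation*}
\|f\chi_k\|_p^p\le \int_{R(2t)}|f|^p+\int_{R(2s)}|f|^p .
\end{equation*}
Raise this to $q/p$ and average over $t$ and $s$ in those two intervals, each of $dt/t$-measure $\tfrac12\log2$. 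The result is that $2^{k\alpha q}\|f\chi_k\|_p^q$ is bounded by $c$ times the integral of $t^{\alpha q}(\int_{R(2t)}|f|^p)^{q/p}\,dt/t$ over $[2^{k-2}\sqrt2,2^{k-1}\sqrt2)$. These intervals are disjoint in $k$, so summing gives $\|f\|_{\dot K_p^{\alpha,q}}\le c\|f\|^{\bullet}$.

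\textbf{Remaining cases.} When $q=\infty$, replace the averages by essential suprema; when $p=\infty$, replace $L^p$ integrals by ess-sups. The argument is unchanged. The nonhomogeneous case follows the same way by restricting to $k\ge1$ and $t\ge1$. The lowest pieces, namely $R(2t)$ for $t\in[1,2)$, which touches $R_1,R_2$, and the term $R_1$, which needs $t$ slightly below $1$, are controlled by $\|f\chi_{B_0}\|_p$ together with neighbouring terms, with constants depending on $\alpha$.

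The only delicate point is this covering step: the bookkeeping near $t=1$ in the nonhomogeneous case, where the ranges of $t$ used to cover $R_1$ must stay inside $[1,\infty)$. That requires choosing the averaging intervals for $R_1$ as $t\in[1,\sqrt2)$ and $s$ replaced by the ball term $\|f\chi_{B_0}\|_p$.
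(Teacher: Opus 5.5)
Your homogeneous argument is correct. For $t\in[2^{k-1},2^k)$ one has $R(2t)\subset R_k\cup R_{k+1}$. Averaging over $t\in[2^{k-1},2^{k-1}\sqrt2)$ and $s\in[2^{k-2}\sqrt2,2^{k-1})$ then gives the reverse bound with pairwise disjoint $t$-ranges. The $p=\infty$ and $q=\infty$ modifications are routine. The paper itself gives no proof of this theorem, only a pointer to Flett's work, so there is no argument of the paper to compare against. The false starts in your first paragraph should be deleted; note that $R(2^k)=R_k$ exactly.

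The genuine gap is the nonhomogeneous lower bound at $k=1$. For $t\in[1,\sqrt2)$ the annulus $R(2t)=\{t\le|x|<2t\}$ misses the shell $\{1\le|x|<t\}$. That shell lies in $R_1$ and \emph{outside} $B_0=\{|x|<1\}$, so $\|f\chi_{B_0}\|_p$ cannot replace the $s$-term. Every other admissible $R(2s)$ with $s\ge1$ misses $\{1\le|x|<s\}$ as well.

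This cannot be repaired, because the claimed equivalence is false when $q<\infty$. Take $f=\chi_{A_\varepsilon}$ with $A_\varepsilon=\{1\le|x|<1+\varepsilon\}$ and $0<\varepsilon<1$. Then $f\chi_{B_0}=0$ and $\|f\|_{K_p^{\alpha,q}}=2^{\alpha}|A_\varepsilon|^{1/p}\approx\varepsilon^{1/p}$. On the other hand, $\int_{R(2t)}|f|^p\,dx\le|A_\varepsilon|\lesssim\varepsilon$ for $t\in[1,1+\varepsilon)$, and it vanishes for $t\ge1+\varepsilon$. Hence $\|f\|^{\bullet}_{K_p^{\alpha,q}}\lesssim\varepsilon^{1/p+1/q}$, and the ratio of the two norms blows up like $\varepsilon^{-1/q}$. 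The same happens for $p=\infty$ with the obvious reading.

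So the nonhomogeneous statement has to be amended before it can be proved. One option is to integrate over $t\ge\frac12$. Then your intervals $[1,\sqrt2)$ and $[1/\sqrt2,1)$ are admissible for $k=1$, and $R(2t)\subset B_0\cup R_1$ for $t\in[\frac12,1)$ handles the upper bound. Another option is to replace $\|f\chi_{B_0}\|_p$ by $\|f\chi_{B_1}\|_p$. With either change your argument goes through verbatim. Only for $q=\infty$ is the original form correct, by letting $t\downarrow1$.
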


This theorem plays an essential role in establishing the optimality of some
assumptions in the embeddings between Herz--Sobolev spaces and $%
L^{\infty}(\Omega)$.

\section{Herz-type Sobolev spaces}

In this section, we establish the basic properties of Herz-type Sobolev
spaces in analogy with the classical Sobolev spaces.

\begin{definition}
	Let $\Omega \subset \mathbb{R}^{n}$ be open, $\alpha \in \mathbb{R}$, $1\leq
	p,q\leq \infty$, and $m\in \mathbb{N}_{0}$.
	
	\medskip
	
	\noindent \textnormal{(i)} Let $(\alpha,p,q)\in V_{\alpha,p,q}$. The
	homogeneous Herz-type Sobolev space $\dot{K}_{p,m}^{\alpha,q}(\Omega)$ is
	defined as the set of all functions $f\in \dot{K}_{p}^{\alpha,q}(\Omega)$
	whose weak derivatives $D^{\beta}f$ belong to $\dot{K}_{p}^{\alpha,q}(%
	\Omega) $ for all multi-indices $\beta$ satisfying $|\beta|\leq m$.
	
	If $1\leq p,q<\infty$, we define the norm by 
	\begin{equation*}
	\|f\|_{\dot{K}_{p,m}^{\alpha,q}(\Omega)} = \left( \sum_{k=-\infty}^{\infty}
	2^{k\alpha q} \left( \sum_{|\beta|\leq m} \|(D^{\beta}f)\chi_{R_{k}\cap
		\Omega}\|_{p}^{p} \right)^{q/p} \right)^{1/q},
	\end{equation*}
	while for $q=\infty$ we set 
	\begin{equation*}
	\|f\|_{\dot{K}_{p,m}^{\alpha,\infty}(\Omega)} = \sup_{k\in \mathbb{Z}}
	2^{k\alpha} \left( \sum_{|\beta|\leq m} \|(D^{\beta}f)\chi_{R_{k}\cap
		\Omega}\|_{p}^{p} \right)^{1/p}.
	\end{equation*}
	
	\medskip
	
	\noindent \textnormal{(ii)} The nonhomogeneous Herz-type Sobolev space $%
	K_{p,m}^{\alpha,q}(\Omega)$ is defined as the set of all functions $f\in
	K_{p}^{\alpha,q}(\Omega)$ whose weak derivatives $D^{\beta}f$ belong to $%
	K_{p}^{\alpha,q}(\Omega)$ for all multi-indices $\beta$ satisfying $%
	|\beta|\leq m$.
	
	If $1\leq p,q<\infty$, we define the norm by 
	\begin{equation*}
	\|f\|_{K_{p,m}^{\alpha,q}(\Omega)} = \left( \sum_{k=0}^{\infty} 2^{k\alpha
		q} \left( \sum_{|\beta|\leq m} \|(D^{\beta}f)\chi_{\widehat{R}_{k}\cap
		\Omega}\|_{p}^{p} \right)^{q/p} \right)^{1/q},
	\end{equation*}
	while for $q=\infty$ we set 
	\begin{equation*}
	\|f\|_{K_{p,m}^{\alpha,\infty}(\Omega)} = \sup_{k\in \mathbb{N}_{0}}
	2^{k\alpha} \left( \sum_{|\beta|\leq m} \|(D^{\beta}f)\chi_{\widehat{R}%
		_{k}\cap \Omega}\|_{p}^{p} \right)^{1/p}.
	\end{equation*}
\end{definition}

\begin{remark}
	It is immediate that if $p=q$ and $\alpha=0$, then 
	\begin{equation*}
	\dot{K}_{p,m}^{0,p}(\Omega) = K_{p,m}^{0,p}(\Omega) = W_{p}^{m}(\Omega).
	\end{equation*}
\end{remark}

Let $J\in \mathcal{D}(\mathbb{R}^{n})$ be a nonnegative real-valued function
such that 
\begin{equation*}
J(x)\geq 0\quad \text{for all }x\in \mathbb{R}^{n},\qquad \text{\textrm{supp }%
}J\subset \overline{B(0,1)},\qquad \int_{\mathbb{R}^{n}}J(x)\,dx=1.
\end{equation*}%
For $\varepsilon >0$, we define 
\begin{equation*}
J_{\varepsilon }(x)=\varepsilon ^{-n}J\left( \frac{x}{\varepsilon }\right)
,\qquad x\in \mathbb{R}^{n}.
\end{equation*}

\begin{theorem}
	\label{approximation1} Let $\Omega \subset \mathbb{R}^{n}$ be open. Let $%
	1\leq p,q<\infty$ and 
	\begin{equation*}
	-\frac{n}{p}<\alpha<n-\frac{n}{p},
	\end{equation*}
	or 
	\begin{equation*}
	1\leq p<\infty, \qquad \alpha=n-\frac{n}{p}, \qquad q=1.
	\end{equation*}
	Assume that $f\in \dot{K}_{p}^{\alpha,q}(\Omega)$ is extended by zero
	outside $\Omega$. Then 
	\begin{equation}
	\lim_{\varepsilon\to 0^{+}} \|J_{\varepsilon}*f-f\|_{\dot{K}%
		_{p}^{\alpha,q}(\Omega)} = 0.  \label{whatweneed}
	\end{equation}
\end{theorem}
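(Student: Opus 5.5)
The plan is the standard density-plus-uniform-boundedness argument. First I would show that the family of operators $f\mapsto J_{\varepsilon}*f$ is uniformly bounded on $\dot{K}_{p}^{\alpha,q}(\mathbb{R}^{n})$ for $0<\varepsilon\le 1$. Then I would approximate $f$ by $g\in C_{c}(\Omega)$ using Theorem \ref{dense}(i), which applies since $\alpha>-\frac{n}{p}$ in both cases listed in the statement. The limit then follows from the three-term estimate
\begin{equation*}
\|J_{\varepsilon}*f-f\|_{\dot{K}_{p}^{\alpha,q}}\le \|J_{\varepsilon}*(f-g)\|_{\dot{K}_{p}^{\alpha,q}}+\|J_{\varepsilon}*g-g\|_{\dot{K}_{p}^{\alpha,q}}+\|g-f\|_{\dot{K}_{p}^{\alpha,q}}.
\end{equation*}
The middle term is easy. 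For $\varepsilon\le 1$, the functions $J_{\varepsilon}*g-g$ are supported in a fixed compact set $K$, converge uniformly to $0$, and satisfy $\|J_\varepsilon*g-g\|_{\infty}\to0$. So only finitely many large annuli $R_k$ contribute, and for them $\|(J_\varepsilon*g-g)\chi_k\|_p\le \|J_\varepsilon*g-g\|_\infty |R_k|^{1/p}$. The small annuli near the origin give the factor
\begin{equation*}
\sum_{k\le k_0}2^{k(\alpha+n/p)q}<\infty,
\end{equation*}
which converges precisely because $\alpha>-n/p$. Hence the middle term is at most $C\|J_\varepsilon*g-g\|_\infty\to0$. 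Note that $J_\varepsilon*f$ restricted to $\Omega$ is all that enters the $\dot K^{\alpha,q}_p(\Omega)$ norm, which is dominated by the norm on $\mathbb{R}^n$.

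The main step is the uniform bound $\|J_{\varepsilon}*h\|_{\dot{K}_{p}^{\alpha,q}}\le c\|h\|_{\dot{K}_{p}^{\alpha,q}}$. In the range $-\frac{n}{p}<\alpha<n-\frac{n}{p}$ with $1<p<\infty$, this follows immediately from the pointwise bound $|J_{\varepsilon}*h|\le \|J\|_{1}\mathcal{M}(h)$, which holds since $J$ is bounded with compact support, together with Lemma \ref{Maximal-Inq}. The cases $p=1$ and the endpoint $\alpha=n-\frac{n}{p}$, $q=1$ are not covered by the maximal lemma, so I would prove them directly by decomposing $h=\sum_j h\chi_j$. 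Using Young's inequality,
\begin{equation*}
\|(J_\varepsilon*(h\chi_j))\chi_k\|_p\le \|h\chi_j\|_p.
\end{equation*}
The support of $J_\varepsilon*(h\chi_j)$ lies in $B_j+B(0,\varepsilon)$, so for $|k-j|\ge2$ the term is nonzero only when $2^{k}\lesssim\varepsilon$ or the annuli are within distance $\varepsilon$.

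For the remote interactions, with $j\le k-2$ and $2^k\le 4\varepsilon$, I would instead use
\begin{equation*}
\|J_\varepsilon*(h\chi_j)\|_{\infty}\le \varepsilon^{-n}\|J\|_\infty\|h\chi_j\|_1\le c\,\varepsilon^{-n}2^{jn/p'}\|h\chi_j\|_p .
\end{equation*}
This gives
\begin{equation*}
2^{k\alpha}\|(\cdot)\chi_k\|_p\lesssim \sum_{j\le k}2^{(k-j)(\alpha+n/p-n)}(2^{k}/\varepsilon)^{n}\,2^{j\alpha}\|h\chi_j\|_p .
\end{equation*}
The factor $2^{(k-j)(\alpha-n+n/p)}$ is summable in $j$ when $\alpha<n-n/p$, and is equal to $1$ at the endpoint, where the hypothesis $q=1$ lets one sum via Fubini. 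The factor $(2^k/\varepsilon)^n$ is summable over $k$ with $2^k\lesssim\varepsilon$. The nearby interactions, with $|k-j|\le 1$, are trivially controlled.

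Interactions with $j\ge k+2$ occur only when $2^{j}\lesssim \varepsilon$. They are handled in the same way, using $\|J_\varepsilon*(h\chi_j)\chi_k\|_p\le \varepsilon^{-n}\|h\chi_j\|_1|R_k|^{1/p}$. This yields the factor $2^{(k-j)(\alpha+n/p)}$, which is summable because $\alpha>-n/p$. Combining these Hardy-type discrete inequalities via Young's inequality for sequences gives the uniform bound. I expect this bookkeeping, especially at the endpoint $q=1$ and for $p=1$, to be the main obstacle.
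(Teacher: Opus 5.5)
Your proof is correct. It has the same skeleton as the paper's: density via Theorem~\ref{dense}, a bound on $h\mapsto J_\varepsilon*h$ that is uniform in $\varepsilon$, and the three-term triangle inequality. You obtain both ingredients differently.

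\textbf{The middle term.} The paper cites \eqref{limit1} from earlier work for $\varphi\in C_c(\Omega)$. You verify it directly, which has the merit of showing exactly where $\alpha>-\frac{n}{p}$ enters, namely through the small annuli near the origin.

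\textbf{The uniform bound.} The paper does not use the maximal function. It splits the output annuli according to whether $2^k\le 4\varepsilon$.
\begin{itemize}
\item For $2^k\le 4\varepsilon$, it bounds $|J_\varepsilon*h|$ pointwise by $\varepsilon^{-n}\int_{B(0,6\varepsilon)}|h|$, applies H\"older annulus by annulus, and sums $2^{(k-v)(\alpha+n/p)}$ using $\alpha>-\frac{n}{p}$.
\item For $2^k>4\varepsilon$, it applies Young's inequality on an enlarged annulus.
\end{itemize}
This single argument covers $p=1$ and the endpoint $\alpha=n-\frac{n}{p}$, $q=1$ at the same time.

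Your pairwise $(j,k)$ interaction analysis is a finer-grained version of the same estimates. Your cases $j\le k-2$ and $j\ge k+2$, together with the nearby terms, reproduce the paper's small-annulus bound. Since that analysis never uses $p>1$, it already proves the uniform bound in the whole range. The appeal to Lemma~\ref{Maximal-Inq} is therefore a convenient shortcut for $1<p<\infty$, but it is not needed.

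\textbf{A small correction.} For $J$ merely bounded and supported in $\overline{B(0,1)}$, the correct pointwise bound is $|J_\varepsilon*h|\le 2^n\|J\|_\infty\,\mathcal{M}(h)$. The constant $\|J\|_1$ requires $J$ to be radially decreasing. This does not affect your argument.
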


\begin{proof}
	By \cite{Dr-Sobolev}, we have 
	\begin{equation}
	\lim_{\varepsilon\to 0^{+}} \|J_{\varepsilon}*\varphi-\varphi\|_{\dot{K}%
		_{p}^{\alpha,q}(\Omega)} = 0  \label{limit1}
	\end{equation}
	for every $\varphi\in C_{c}(\Omega)$ and every $\alpha\in \mathbb{R}$. By
	Theorem \ref{dense}, for any sufficiently small $\eta>0$, we may choose $%
	\varphi\in C_{c}(\Omega)$ such that 
	\begin{equation}
	\|f-\varphi\|_{\dot{K}_{p}^{\alpha,q}(\Omega)} \leq \frac{\eta}{3}.
	\label{limit2}
	\end{equation}
	
	\medskip
	
	\noindent \textbf{Step 1.} We prove \eqref{whatweneed}. We claim that 
	\begin{equation*}
	\|J_{\varepsilon}*f-J_{\varepsilon}*\varphi\|_{\dot{K}_{p}^{\alpha,q}(%
		\Omega)} \lesssim \|f-\varphi\|_{\dot{K}_{p}^{\alpha,q}(\Omega)},
	\end{equation*}
	where the implicit constant is independent of $\varepsilon$.
	
	Using \eqref{limit2}, for any sufficiently small $\eta_{1}>0$ we obtain 
	\begin{equation*}
	\|J_{\varepsilon}*f-J_{\varepsilon}*\varphi\|_{\dot{K}_{p}^{\alpha,q}(%
		\Omega)} \leq c \|f-\varphi\|_{\dot{K}_{p}^{\alpha,q}(\Omega)} \leq
	c\eta_{1}.
	\end{equation*}
	Choosing $\eta_{1}$ such that $c\eta_{1}<\eta/3$, and using \eqref{limit1},
	we find that 
	\begin{equation*}
	\|J_{\varepsilon}*\varphi-\varphi\|_{\dot{K}_{p}^{\alpha,q}(\Omega)} \leq 
	\frac{\eta}{3}
	\end{equation*}
	for sufficiently small $\varepsilon>0$. Hence, \eqref{whatweneed} follows
	from the triangle inequality.
	
	\medskip
	
	\noindent \textbf{Step 2.} We now prove the claim. Let $\varepsilon>0$ be
	sufficiently small. Write 
	\begin{equation*}
	\|J_{\varepsilon}*f-J_{\varepsilon}*\varphi\|_{\dot{K}_{p}^{\alpha,q}(%
		\Omega)}^{q} = I_{1,\varepsilon}+I_{2,\varepsilon},
	\end{equation*}
	where 
	\begin{equation*}
	I_{1,\varepsilon} = \sum_{\substack{ k\in \mathbb{Z}  \\ 2^{k}\leq
			4\varepsilon}} 2^{k\alpha q}
	\|(J_{\varepsilon}*(f-\varphi))\chi_{R_{k}\cap\Omega}\|_{p}^{q},
	\end{equation*}
	and 
	\begin{equation*}
	I_{2,\varepsilon} = \sum_{\substack{ k\in \mathbb{Z}  \\ 2^{k}>4\varepsilon 
	}} 2^{k\alpha q} \|(J_{\varepsilon}*(f-\varphi))\chi_{ R_{k}\cap\Omega}\|_{p}^{q}.
	\end{equation*}
	Assume first that 
	\begin{equation*}
	-\frac{n}{p}<\alpha<n-\frac{n}{p}, \qquad 1\leq p,q<\infty.
	\end{equation*}
	By H\"{o}lder's inequality, for every $x\in R_{k}\cap\Omega$ with $2^{k}\leq
	4\varepsilon$, we obtain 
	\begin{align*}
	|J_{\varepsilon}*(f-\varphi)(x)| &\leq \|J\|_{\infty} \varepsilon^{-n}
	\int_{B(0,6\varepsilon)\cap\Omega} |f(y)-\varphi(y)|\,dy \\
	&\lesssim \varepsilon^{-n} \sum_{j=-\infty}^{v} \int_{R_{j}\cap\Omega}
	|f(y)-\varphi(y)|\,dy \\
	&\lesssim \varepsilon^{-n} \sum_{j=-\infty}^{v} 2^{j\left(n-\frac{n}{p}%
		-\alpha\right)} 2^{j\alpha} \|(f-\varphi)\chi_{R_{j}\cap\Omega}\|_{p} \\
	&\lesssim 2^{-v\left(\frac{n}{p}+\alpha\right)} \|f-\varphi\|_{\dot{K}%
		_{p}^{\alpha,\infty}(\Omega)},
	\end{align*}
	where $2^{v-1}\leq \varepsilon<2^{v}, v\in \mathbb{Z}$ and the implicit
	constant is independent of $\varepsilon$. Consequently, 
	\begin{align*}
	I_{1,\varepsilon} &\lesssim \|f-\varphi\|_{\dot{K}_{p}^{\alpha,q}(%
		\Omega)}^{q} \sum_{\substack{ k\in \mathbb{Z}  \\ 2^{k}\leq 4\varepsilon}}
	2^{(k-v)\left(\frac{n}{p}+\alpha\right)} \\
	&\lesssim \|f-\varphi\|_{\dot{K}_{p}^{\alpha,q}(\Omega)}^{q},
	\end{align*}
	since $\alpha>-\frac{n}{p}$. Now assume that 
	\begin{equation*}
	\alpha=n-\frac{n}{p}, \qquad 1\leq p<\infty, \qquad q=1.
	\end{equation*}
	Then 
	\begin{align*}
	|J_{\varepsilon}*(f-\varphi)(x)| &\lesssim \varepsilon^{-n}
	\sum_{j=-\infty}^{v} 2^{j(n-\frac{n}{p})} \|(f-\varphi)\chi_{{R_{j}\cap\Omega}}\|_{p} \\
	&\lesssim \varepsilon^{-n} \|f-\varphi\|_{\dot{K}_{p}^{n-\frac{n}{p},1}(\Omega)},
	\end{align*}
	which yields 
	\begin{align*}
	I_{1,\varepsilon} &\lesssim \|f-\varphi\|_{\dot{K}_{p}^{n-\frac{n}{p},1}(\Omega)}
	\sum_{\substack{ k\in \mathbb{Z}  \\ 2^{k}\leq 4\varepsilon}} 2^{(k-v)n} \\
	&\lesssim \|f-\varphi\|_{\dot{K}_{p}^{n-\frac{n}{p},1}(\Omega)},
	\end{align*}
	with constants independent of $\varepsilon$.
	
	We now estimate $I_{2,\varepsilon}$. Let 
	\begin{equation*}
	\widetilde{R}_{k} = \{x\in \mathbb{R}^{n}:2^{k-1}\leq |x|\leq 2^{k+1}\}.
	\end{equation*}
	For every $x\in R_{k}\cap\Omega$ with $2^{k}>4\varepsilon$, we have 
	\begin{align*}
	|J_{\varepsilon}*(f-\varphi)(x)| &\leq \int_{\widetilde{R}_{k}\cap\Omega}
	J_{\varepsilon}(x-y) |f(y)-\varphi(y)|\,dy \\
	&= J_{\varepsilon}* \big((f-\varphi)\chi_{\widetilde{R}_{k}\cap\Omega}\big)(x).
	\end{align*}
	Therefore, 
	\begin{align*}
	I_{2,\varepsilon} &\lesssim \sum_{\substack{ k\in \mathbb{Z}  \\ %
			2^{k}>4\varepsilon}} 2^{k\alpha q} \|(f-\varphi)\chi_{\widetilde{R}%
		_{k}\cap\Omega}\|_{p}^{q} \\
	&\lesssim \|f-\varphi\|_{\dot{K}_{p}^{\alpha,q}(\Omega)}^{q}.
	\end{align*}
	This completes the proof.
\end{proof}

\begin{remark}
	Although \eqref{limit1} was proved in \cite{Dr-Sobolev} only in the case $%
	0\in \Omega$, the same techniques can easily be adapted to arbitrary open
	sets $\Omega$.
\end{remark}

The following theorem can be proved by arguments similar to those used in
Theorem \ref{approximation1}.

\begin{theorem}
	\label{approximation1 copy(1)} Let $\Omega \subset \mathbb{R}^{n}$ be an
	open set. Let $1\leq p,q<\infty$ and $\alpha\in \mathbb{R}$. Assume that $%
	f\in K_{p}^{\alpha,q}(\Omega)$ is extended by zero outside $\Omega$. Then 
	\begin{equation*}
	\lim_{\varepsilon\to 0^{+}}
	\|J_{\varepsilon}*f-f\|_{K_{p}^{\alpha,q}(\Omega)} = 0.
	\end{equation*}
\end{theorem}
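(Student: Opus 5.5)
We follow the scheme of Theorem \ref{approximation1}: an $\eta/3$ argument combined with a uniform bound for the convolution operators. Let $f\in K_{p}^{\alpha,q}(\Omega)$ be extended by zero, and fix $\eta>0$. By Theorem \ref{dense}(ii), which holds for every $\alpha\in\mathbb{R}$, we choose $\varphi\in C_{c}(\Omega)$ with $\|f-\varphi\|_{K_{p}^{\alpha,q}(\Omega)}\le \eta/3$. For $\varphi$ itself, $J_{\varepsilon}*\varphi\to\varphi$ uniformly, and for $\varepsilon\le 1$ all supports lie in a fixed compact set $K$. Moreover only finitely many $\widehat{R}_{k}$, $k\ge 0$, meet a neighbourhood of $K$. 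Hence $\|J_{\varepsilon}*\varphi-\varphi\|_{K_{p}^{\alpha,q}(\Omega)}\lesssim_{K}\|J_{\varepsilon}*\varphi-\varphi\|_{\infty}\to0$, which is the nonhomogeneous analogue of \eqref{limit1}. The theorem then follows from the triangle inequality once we prove the following claim.

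The claim is that $\|J_{\varepsilon}*g\|_{K_{p}^{\alpha,q}(\Omega)}\le c\|g\|_{K_{p}^{\alpha,q}(\Omega)}$ for all $g$ and all $0<\varepsilon\le 1/4$, with $c$ independent of $\varepsilon$. We apply it with $g=f-\varphi$. Here the nonhomogeneous structure makes things easier than in Theorem \ref{approximation1}, because there is no small-scale sum near the origin; this is why no restriction on $\alpha$ is needed.

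To prove the claim, we split the sum over $k\ge0$ as follows.

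\textbf{Small $k$.} For $k\in\{0,1,2\}$, the set $\widehat{R}_{k}$ lies in $B(0,4)$. For $x\in B(0,4)$ the convolution only sees $g$ on $B(0,5)\subset\widehat{R}_{0}\cup\widehat{R}_{1}\cup\widehat{R}_{2}\cup\widehat{R}_{3}$. Young's inequality with $\|J_{\varepsilon}\|_{1}=1$ then gives
\[
\|(J_{\varepsilon}*g)\chi_{\widehat{R}_{k}}\|_{p}\le \sum_{j=0}^{3}\|g\chi_{\widehat{R}_{j}}\|_{p},
\]
and the weights $2^{k\alpha}$, $2^{j\alpha}$ are comparable for these finitely many indices, whatever the sign of $\alpha$.

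\textbf{Large $k$.} For $k\ge3$ we have $2^{k-1}\ge 4\varepsilon$. Exactly as in Step 2 of Theorem \ref{approximation1}, for $x\in R_{k}$,
\[
|J_{\varepsilon}*g(x)|\le J_{\varepsilon}*\big(|g|\chi_{\widetilde{R}_{k}}\big)(x),
\]
with $\widetilde{R}_{k}\subset R_{k-1}\cup R_{k}\cup R_{k+1}$. Young's inequality then yields
\[
2^{k\alpha}\|(J_{\varepsilon}*g)\chi_{R_{k}}\|_{p}\lesssim \sum_{|j-k|\le1}2^{j\alpha}\|g\chi_{R_{j}}\|_{p},
\]
where the constant is $\max(2^{|\alpha|},1)$. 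Summing the $q$-th powers gives the bound by $\|g\|^{q}_{K_{p}^{\alpha,q}}$.

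Finally, intersecting with $\Omega$ is harmless, since $g$ vanishes outside $\Omega$ and we only bound the restriction of $J_{\varepsilon}*g$ to $\widehat{R}_{k}\cap\Omega$ by its restriction to $\widehat{R}_{k}$.

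The only delicate point is the bookkeeping near the origin, namely making sure that $B(0,4+\varepsilon)$ is covered by finitely many $\widehat{R}_{j}$. This is immediate, so we expect no real obstacle.
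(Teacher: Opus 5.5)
Your proposal is correct and follows the route the paper indicates, namely the argument of Theorem~\ref{approximation1}: density of $C_{c}(\Omega)$ from Theorem~\ref{dense}(ii), an $\eta/3$ argument, and an $\varepsilon$-uniform bound for $g\mapsto J_{\varepsilon}*g$ obtained by localizing the convolution to neighbouring annuli and applying Young's inequality. The differences are minor: you treat the fixed ball $B(0,4)$ with a single Young estimate instead of the H\"older bound used for $I_{1,\varepsilon}$, which is exactly why no restriction on $\alpha$ is needed, and you prove the convergence for $\varphi\in C_{c}(\Omega)$ directly rather than citing it.
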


As in the classical Sobolev setting (see \cite{AdamsFournier03} and \cite%
{Burenkov}), we also have the following results.

\begin{lemma}
	\label{productsmooth} Let $\Omega \subset \mathbb{R}^{n}$ be open, $%
	\alpha\in \mathbb{R}$, $1\leq p,q\leq \infty$, and $m\in \mathbb{N}_{0}$.
	Then for every $\varphi\in \mathcal{D}(\Omega)$ there exists a constant $c>0 
	$ such that 
	\begin{equation*}
	\|\varphi f\|_{\dot{K}_{p,m}^{\alpha,q}(\Omega)} \leq c \|f\|_{\dot{K}%
		_{p,m}^{\alpha,q}(\Omega)}
	\end{equation*}
	for all $f\in \dot{K}_{p,m}^{\alpha,q}(\Omega)$, and 
	\begin{equation*}
	\|\varphi f\|_{K_{p,m}^{\alpha,q}(\Omega)} \leq c
	\|f\|_{K_{p,m}^{\alpha,q}(\Omega)}
	\end{equation*}
	for all $f\in K_{p,m}^{\alpha,q}(\Omega)$.
\end{lemma}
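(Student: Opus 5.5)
The plan is to reduce the statement to the classical Leibniz formula for weak derivatives, combined with the fact that multiplication by a bounded function is bounded on Herz spaces annulus by annulus. Fix $\varphi\in\mathcal{D}(\Omega)$ and $f\in\dot{K}_{p,m}^{\alpha,q}(\Omega)$.

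First, we must make sure that the Leibniz rule
\begin{equation*}
D^{\beta}(\varphi f)=\sum_{\gamma\leq\beta}\binom{\beta}{\gamma}D^{\beta-\gamma}\varphi\, D^{\gamma}f,\qquad |\beta|\leq m,
\end{equation*}
holds in the weak sense. By the definition of the space, $(\alpha,p,q)\in V_{\alpha,p,q}$, so Lemma \ref{L1loc} (or, when $0\notin\Omega$, the trivial local integrability of $L^{p}$ functions away from the origin) guarantees that $f$ and all $D^{\gamma}f$ with $|\gamma|\leq m$ belong to $L^{1}_{\mathrm{loc}}(\Omega)$; in the nonhomogeneous case we use Remark \ref{L1loc1} instead. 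Hence the weak derivatives make sense as regular distributions.

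The formula itself is proved by induction on $|\beta|$, exactly as in the classical Sobolev setting. For $\psi\in\mathcal{D}(\Omega)$ we have $\varphi\psi\in\mathcal{D}(\Omega)$, and
\begin{equation*}
\int_{\Omega}\varphi f\,D_{i}\psi\,dx=\int_{\Omega}f\big(D_{i}(\varphi\psi)-\psi D_{i}\varphi\big)\,dx=-\int_{\Omega}\big(\varphi D_{i}f+fD_{i}\varphi\big)\psi\,dx.
\end{equation*}
This gives $D_{i}(\varphi f)=\varphi D_{i}f+fD_{i}\varphi$. Iterating, with each $D^{\delta}\varphi$ again in $\mathcal{D}(\Omega)$, yields the general case. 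Only $L^{1}_{\mathrm{loc}}$ integrability and the definition of weak derivatives are used here, so there is no Herz-specific difficulty.

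Second, we estimate the norm on each annulus. Set $M=\max_{|\delta|\leq m}\|D^{\delta}\varphi\|_{\infty}<\infty$. For every $k$ and every $|\beta|\leq m$, Minkowski's inequality gives
\begin{equation*}
\|D^{\beta}(\varphi f)\chi_{R_{k}\cap\Omega}\|_{p}\leq c_{m}M\sum_{|\gamma|\leq m}\|D^{\gamma}f\chi_{R_{k}\cap\Omega}\|_{p}.
\end{equation*}
Summing the $p$-th powers over $|\beta|\leq m$, and using that the number of multi-indices is finite so all finite $\ell^{r}$ sums are comparable, we get
\begin{equation*}
\Big(\sum_{|\beta|\leq m}\|D^{\beta}(\varphi f)\chi_{R_{k}\cap\Omega}\|_{p}^{p}\Big)^{1/p}\leq c\Big(\sum_{|\gamma|\leq m}\|D^{\gamma}f\chi_{R_{k}\cap\Omega}\|_{p}^{p}\Big)^{1/p},
\end{equation*}
with $c$ independent of $k$. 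The case $p=\infty$ is handled with the obvious changes.

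Finally, we multiply by $2^{k\alpha}$ and take the $\ell^{q}$ norm (or the supremum when $q=\infty$) over $k\in\mathbb{Z}$. This gives the homogeneous estimate, and in particular shows that $\varphi f\in\dot{K}_{p,m}^{\alpha,q}(\Omega)$. The nonhomogeneous case is identical, with $\widehat{R}_{k}$, $k\in\mathbb{N}_{0}$, in place of $R_{k}$.

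The only step needing care is the first: the weak derivatives must be well defined, which is exactly why the parameters are restricted to $V_{\alpha,p,q}$. Once that is in place the argument is purely local and pointwise. No restriction on $\alpha$ beyond those already built into the definition is needed, because multiplication by a bounded function acts separately on each annulus.
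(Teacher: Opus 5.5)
Your proposal is correct and follows essentially the argument the paper intends. The paper gives no separate proof and only points to the classical Sobolev case in Adams--Fournier and Burenkov, i.e.\ the weak Leibniz formula (justified by local integrability via Lemma~\ref{L1loc} and Remark~\ref{L1loc1}) followed by the $\|D^{\beta-\gamma}\varphi\|_{\infty}$ bound on each annulus, uniform in $k$, before taking the weighted $\ell^{q}$-norm.
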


\begin{theorem}
	Let $\Omega \subset \mathbb{R}^{n}$ be open, $\alpha\in \mathbb{R}$, $1\leq
	p,q\leq \infty$, and $m\in \mathbb{N}_{0}$.
	
	\medskip
	
	\noindent \textnormal{(i)} Let $(\alpha,p,q)\in V_{\alpha,p,q}$. Then the
	homogeneous Herz-type Sobolev space $\dot{K}_{p,m}^{\alpha,q}(\Omega)$ is a
	Banach space.
	
	\medskip
	
	\noindent \textnormal{(ii)} The nonhomogeneous Herz-type Sobolev space $%
	K_{p,m}^{\alpha,q}(\Omega)$ is a Banach space.
\end{theorem}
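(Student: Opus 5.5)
We follow the classical argument for completeness of $W_{p}^{m}(\Omega)$, taking as given that the Herz spaces $\dot{K}_{p}^{\alpha,q}(\Omega)$ and $K_{p}^{\alpha,q}(\Omega)$ are Banach spaces. First we check that the expressions in the definition are norms and compare them with the simpler quantity $\sum_{|\beta|\le m}\|D^{\beta}f\|_{\dot{K}_{p}^{\alpha,q}(\Omega)}$. The finite-dimensional norms $\ell^{p}$ and $\ell^{1}$ on vectors indexed by $|\beta|\le m$ are equivalent, and the outer $\ell^{q}$ sum with weights $2^{k\alpha q}$ obeys Minkowski's inequality. Together these give
\begin{equation*}
\|f\|_{\dot{K}_{p,m}^{\alpha,q}(\Omega)}\approx \sum_{|\beta|\leq m}\|D^{\beta}f\|_{\dot{K}_{p}^{\alpha,q}(\Omega)},
\end{equation*}
with constants depending only on $n,m,p,q$; the case $q=\infty$ is handled in the same way. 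The nonhomogeneous case is identical, using $\widehat{R}_{k}$. So it suffices to prove completeness for the equivalent norm on the right.

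Next, let $(f_{j})$ be Cauchy in $\dot{K}_{p,m}^{\alpha,q}(\Omega)$. By the equivalence, $(D^{\beta}f_{j})_{j}$ is Cauchy in $\dot{K}_{p}^{\alpha,q}(\Omega)$ for every $|\beta|\le m$. Completeness of the Herz space then gives limits $f_{\beta}\in\dot{K}_{p}^{\alpha,q}(\Omega)$, and we set $f=f_{0}$.

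The key step is to show that $f_{\beta}=D^{\beta}f$ weakly. This is where the hypothesis $(\alpha,p,q)\in V_{\alpha,p,q}$ enters. When $0\in\Omega$, Lemma \ref{L1loc} gives $\dot{K}_{p}^{\alpha,q}(\Omega)\hookrightarrow L_{\mathrm{loc}}^{1}(\Omega)$. When $0\notin\Omega$, the local embedding holds trivially: on a compact $K\subset\Omega$ one has $|x|$ bounded above and below, so $K$ meets only finitely many annuli and Hölder's inequality applies on each. Hence for every $\varphi\in\mathcal{D}(\Omega)$, setting $K=\mathrm{supp}\,\varphi$,
\begin{equation*}
\left|\int_{\Omega}(D^{\beta}f_{j}-f_{\beta})\varphi\,dx\right|\leq \|\varphi\|_{\infty}\|D^{\beta}f_{j}-f_{\beta}\|_{L^{1}(K)}\lesssim \|D^{\beta}f_{j}-f_{\beta}\|_{\dot{K}_{p}^{\alpha,q}(\Omega)}\to 0,
\end{equation*}
and similarly $\int_{\Omega} f_{j}D^{\beta}\varphi\,dx\to\int_{\Omega} fD^{\beta}\varphi\,dx$. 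Passing to the limit in $\int_{\Omega} f_{j}D^{\beta}\varphi\,dx=(-1)^{|\beta|}\int_{\Omega} D^{\beta}f_{j}\,\varphi\,dx$ yields $D^{\beta}f=f_{\beta}$. Therefore $f\in\dot{K}_{p,m}^{\alpha,q}(\Omega)$ and $\|f_{j}-f\|\to0$.

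For (ii) we argue in the same way, using Remark \ref{L1loc1} for the local integrability. The main obstacle is this identification of the weak derivatives, that is, the continuity of the embedding into $L_{\mathrm{loc}}^{1}(\Omega)$ uniformly on compact sets. This is why the homogeneous case requires $(\alpha,p,q)\in V_{\alpha,p,q}$; the rest of the argument is routine.
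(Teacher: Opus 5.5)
Your proposal is correct and follows the same route the paper intends: the paper gives no separate proof and simply defers to the classical completeness argument for $W_{p}^{m}(\Omega)$. You carry that argument out, using the hypothesis $(\alpha,p,q)\in V_{\alpha,p,q}$ through Lemma \ref{L1loc}, and Remark \ref{L1loc1} in the nonhomogeneous case, to obtain local integrability and pass to the limit in the weak-derivative identity; your separate observation that the embedding into $L_{\mathrm{loc}}^{1}(\Omega)$ is automatic when $0\notin\Omega$ correctly covers the case that lemma does not address.
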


Based on Theorems \ref{approximation1}, \ref{approximation1 copy(1)}, Lemma %
\ref{productsmooth}, and the arguments presented in \cite{AdamsFournier03}, we
obtain the following density result.

\begin{theorem}
	\label{density} Let $\Omega \subset \mathbb{R}^{n}$ be open, $\alpha\in 
	\mathbb{R}$, and $m\in \mathbb{N}_{0}$.
	
	\medskip
	
	\noindent \textnormal{(i)} Let $1\leq p,q<\infty$ and 
	\begin{equation*}
	-\frac{n}{p}<\alpha<n-\frac{n}{p},
	\end{equation*}
	or 
	\begin{equation*}
	1\leq p<\infty, \qquad \alpha=n-\frac{n}{p}, \qquad q=1.
	\end{equation*}
	Then  $C^{\infty}(\Omega)\cap \dot{K}_{p,m}^{\alpha,q}(\Omega)$ is dense in $\dot{K}_{p,m}^{\alpha,q}(\Omega)$.
	
	\medskip
	
	\noindent \textnormal{(ii)} Let $1\leq p,q<\infty$. Then 
	$C^{\infty}(\Omega)\cap K_{p,m}^{\alpha,q}(\Omega) $
	is dense in $K_{p,m}^{\alpha,q}(\Omega)$.
\end{theorem}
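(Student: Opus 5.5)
We follow the Meyers--Serrin argument of \cite{AdamsFournier03}, replacing the $L^p$ estimates by Theorems \ref{approximation1} and \ref{approximation1 copy(1)}. We treat (i); (ii) is identical with Theorem \ref{approximation1 copy(1)} in place of Theorem \ref{approximation1}.

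\textbf{Exhaustion and partition of unity.} Fix $f\in \dot{K}_{p,m}^{\alpha,q}(\Omega)$ and $\eta>0$. Set $\Omega_k=\{x\in\Omega: |x|<k,\ \mathrm{dist}(x,\partial\Omega)>1/k\}$ for $k\ge1$, and put $\Omega_0=\Omega_{-1}=\emptyset$. Let $U_k=\Omega_{k+1}\setminus\overline{\Omega_{k-1}}$. The $U_k$ form a locally finite open cover of $\Omega$ by sets $U_k\Subset\Omega$. Choose a smooth partition of unity $\{\psi_k\}$ subordinate to it, with $\psi_k\in\mathcal{D}(U_k)$ and $\sum_k\psi_k=1$ on $\Omega$.

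\textbf{Localization and mollification.} By Lemma \ref{productsmooth}, $\psi_k f\in \dot{K}_{p,m}^{\alpha,q}(\Omega)$, with $\mathrm{supp}(\psi_k f)\subset U_k$. For $|\beta|\le m$ the weak derivative satisfies $D^\beta(J_\varepsilon*(\psi_kf))=J_\varepsilon*D^\beta(\psi_kf)$ once $\varepsilon<\mathrm{dist}(U_k,\partial\Omega)$. This is the usual fact for compactly supported functions in $L^1_{\mathrm{loc}}$, and Lemma \ref{L1loc} and Remark \ref{L1loc1} guarantee that all these derivatives are locally integrable. Apply Theorem \ref{approximation1} to each $D^\beta(\psi_kf)\in\dot{K}_{p}^{\alpha,q}(\Omega)$, extended by zero; its hypotheses are exactly those of (i). This gives $\varepsilon_k>0$ such that $\phi_k=J_{\varepsilon_k}*(\psi_kf)$ satisfies
\begin{equation*}
\mathrm{supp}\,\phi_k\subset \Omega_{k+2}\setminus\overline{\Omega_{k-2}},\qquad \|\phi_k-\psi_kf\|_{\dot{K}_{p,m}^{\alpha,q}(\Omega)}\le 2^{-k}\eta .
\end{equation*}
The derivative norms combine into the Sobolev norm because the finitely many $\beta$ give an equivalent norm, up to a constant depending only on $m,n,p$.

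\textbf{Summation.} Set $g=\sum_k\phi_k$. On each compact $K\Subset\Omega$ only finitely many $\phi_k$ are nonzero, so $g\in C^\infty(\Omega)$. On $\Omega_j$ we also have $f-g=\sum_{k\le j+2}(\psi_kf-\phi_k)$. The triangle inequality in the Banach space $\dot{K}_{p,m}^{\alpha,q}(\Omega_j)$ then gives
\begin{equation*}
\|f-g\|_{\dot{K}_{p,m}^{\alpha,q}(\Omega_j)}\le \eta\sum_k2^{-k}=\eta .
\end{equation*}

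\textbf{Main obstacle: passing to $\Omega$.} We need $\|f-g\|_{\dot{K}_{p,m}^{\alpha,q}(\Omega)}=\lim_j\|f-g\|_{\dot{K}_{p,m}^{\alpha,q}(\Omega_j)}$. This is where the Herz norm requires care, since it is not a plain integral. It follows from monotone convergence applied twice. First, inside each annulus $R_k$, the $L^p$ norms of $D^\beta(f-g)\chi_{R_k\cap\Omega_j}$ increase to those on $R_k\cap\Omega$. Second, the outer $\ell^q$ sum over $k$ increases accordingly, or the supremum if $q=\infty$, although (i) and (ii) only require $q<\infty$. Hence $\|f-g\|_{\dot{K}_{p,m}^{\alpha,q}(\Omega)}\le\eta$. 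In particular $g=f-(f-g)\in C^\infty(\Omega)\cap\dot{K}_{p,m}^{\alpha,q}(\Omega)$, which proves density.
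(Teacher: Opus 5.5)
Your proposal is correct and follows the route the paper indicates: the Meyers--Serrin argument of \cite{AdamsFournier03}, localized with Lemma \ref{productsmooth}, with the $L^{p}$ convergence of mollifiers replaced by Theorems \ref{approximation1} and \ref{approximation1 copy(1)}; your monotone-convergence step for passing from $\Omega_j$ to $\Omega$ makes explicit the one place where the Herz norm needs care, which the paper leaves implicit. One minor point: Lemma \ref{L1loc} assumes $0\in\Omega$, but when $0\notin\Omega$ local integrability is immediate, since compact subsets of $\Omega$ avoid the origin and Herz functions lie in $L^{p}_{\mathrm{loc}}(\mathbb{R}^{n}\setminus\{0\})$.
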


\section{Main tools}

The proofs of the main results rely on several fundamental techniques from the theory of function spaces and harmonic analysis.

\begin{definition}
	Let $v\in \mathbb{R}^{n}\setminus \{0\}$ and, for each $x\neq 0$, let $%
	\angle (x,v)$ denote the angle between the position vector $x$ and $v$. Let $%
	\kappa$ satisfy $0<\kappa <\pi$. The set 
	\begin{equation*}
	C=\{x\in \mathbb{R}^{n}:x=0 \text{ or } 0<|x|\leq \delta,\ \angle (x,v)\leq
	\kappa /2\}
	\end{equation*}
	is called a finite cone with vertex at the origin, height $\delta$, axis
	direction $v$, and aperture angle $\kappa$.
\end{definition}

\begin{remark}
	Let $C$ be a finite cone with vertex at the origin. Observe that 
	\begin{equation*}
	x+C=\{x+y:y\in C\}
	\end{equation*}
	is a finite cone with vertex at $x$, having the same dimensions and axis
	direction as $C$, obtained by a parallel translation of $C$.
\end{remark}

We are now in a position to define domains satisfying the cone condition.

\begin{definition}
	\label{conecondition} Let $\Omega \subset \mathbb{R}^{n}$ be open. We say
	that $\Omega$ satisfies the cone condition if there exists a finite cone $C$
	such that, for every $x\in \Omega$, there exists a finite cone $C_{x}\subset
	\Omega$ with vertex at $x$ that is congruent to $C$.
\end{definition}

\begin{remark}
	In Definition \ref{conecondition}, the cone $C_{x}$ is not necessarily
	obtained from $C$ by parallel translation, but rather by a rigid motion.
\end{remark}

The following statement, which can be found in \cite[Lemma 4.15]%
{AdamsFournier03}, plays an essential role in what follows.

\begin{lemma}
	\label{Sobolev-rep1} Let $\Omega \subset \mathbb{R}^{n}$ be a domain
	satisfying the cone condition. Then there exists a positive constant $K$,
	depending only on $m$, $n$, and the parameters $\delta$ and $\kappa$ of the
	cone $C$ appearing in the cone condition for $\Omega$, such that for every $%
	f\in C^{\infty}(\Omega)$, every $x\in \Omega$, and every $\tau$ satisfying $%
	0<\tau \leq \delta$, we have 
	\begin{equation*}
	|f(x)|\leq K\Bigg( \sum_{|\beta |\leq m-1}\tau ^{|\beta |-n}
	\int_{C_{x,\tau}}|D^{\beta }f(y)|\,dy + \sum_{|\beta |=m}\int_{C_{x,\tau }} 
	\frac{|D^{\beta }f(y)|}{|x-y|^{n-m}}\,dy \Bigg),
	\end{equation*}
	where 
	\begin{equation*}
	C_{x,\tau }=\{y\in C_{x}: y\in \overline{B(x,\tau )}\}.
	\end{equation*}
\end{lemma}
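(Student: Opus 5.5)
The estimate is local and pointwise, so the plan is to reduce it to a one-variable Taylor formula along rays of the cone $C_{x,\tau}$ and then average over the cone. Fix $x\in\Omega$, $0<\tau\le\delta$, and the cone $C_x\subset\Omega$ congruent to $C$. Let $\sigma$ be the set of unit directions $\omega$ with $x+r\omega\in C_x$ for $0<r\le\delta$. Since $C_x$ is convex, it is the union of these segments, and $|\sigma|$ is a positive surface measure depending only on $\kappa$ and $n$.

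Fix a cutoff $\psi\in C^\infty([0,\infty))$ with $\psi=1$ on $[0,1/2]$, $\psi=0$ on $[1,\infty)$, and set $\psi_\tau(r)=\psi(r/\tau)$, so that $|\psi_\tau^{(j)}|\le c_j\tau^{-j}$. For $\omega\in\sigma$, let $g(r)=f(x+r\omega)\psi_\tau(r)$. Then $g(\tau)=0$ together with all its derivatives, and $g(0)=f(x)$. Integrating by parts $m$ times (the standard Taylor remainder) gives
\[
f(x)=\frac{(-1)^m}{(m-1)!}\int_0^\tau r^{m-1}g^{(m)}(r)\,dr .
\]
The boundary terms vanish at $r=\tau$ because of $\psi_\tau$, and at $r=0$ because of the powers of $r$.

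By the Leibniz rule,
\[
g^{(m)}(r)=\sum_{j=0}^m\binom{m}{j}\psi_\tau^{(m-j)}(r)\,\frac{d^j}{dr^j}f(x+r\omega),
\qquad
\Bigl|\frac{d^j}{dr^j}f(x+r\omega)\Bigr|\le c\sum_{|\beta|=j}|D^\beta f(x+r\omega)|.
\]
Consider first the terms with $j<m$. There $\psi_\tau^{(m-j)}$ is supported in $[\tau/2,\tau]$ and bounded by $c\tau^{j-m}$, and on that interval $r^{m-1}\le c\,\tau^{m-1}$, while $r^{n-1}\ge c\,\tau^{n-1}$. Hence
\[
r^{m-1}\,|\psi_\tau^{(m-j)}|\le c\,\tau^{j-n}\,r^{n-1}.
\]
For $j=m$ we write $r^{m-1}=r^{m-n}r^{n-1}$, with $r=|x-y|$ for $y=x+r\omega$.

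Next we integrate the identity over $\omega\in\sigma$, divide by $|\sigma|$, and pass to polar coordinates $dy=r^{n-1}\,dr\,d\omega$ centered at $x$. The region covered is exactly $C_{x,\tau}$. This yields
\[
|f(x)|\le K\Bigl(\sum_{|\beta|\le m-1}\tau^{|\beta|-n}\int_{C_{x,\tau}}|D^\beta f|\,dy+\sum_{|\beta|=m}\int_{C_{x,\tau}}\frac{|D^\beta f(y)|}{|x-y|^{n-m}}\,dy\Bigr),
\]
with $K$ depending on $m$, $n$, $\psi$ and $|\sigma|$, hence only on $m$, $n$ and $\kappa$. The dependence on $\delta$ enters only through the restriction $\tau\le\delta$.

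The main obstacle is bookkeeping rather than any single estimate. The first point is that $f\in C^\infty(\Omega)$ need not be bounded near $\partial\Omega$; this is harmless, because the whole argument takes place on the closed segments $\{x+r\omega: 0\le r\le\tau\}$, which are compact subsets of $\Omega$ (or on their limits, which only requires convergence of the integrals, and that holds since the right-hand side is assumed finite or is $+\infty$). The second point, which I expect to be the hard step, is checking that the cutoff derivatives produce precisely the weights $\tau^{|\beta|-n}$. This is settled by the support observation above: $\psi_\tau^{(m-j)}$ lives on $[\tau/2,\tau]$, where $r\approx\tau$.
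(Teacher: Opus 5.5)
Your proposal is correct and is essentially the standard proof of Lemma 4.15 in Adams--Fournier, which the paper cites rather than reproving: a cutoff $\psi(r/\tau)$ along the rays of $C_x$, the $m$-fold integration-by-parts formula, the fact that $\psi_\tau^{(m-j)}$ is supported in $[\tau/2,\tau]$ (which produces the weights $\tau^{|\beta|-n}$), and averaging over the cap of directions in polar coordinates. One small wording fix: that $C_{x,\tau}$ is the union of the segments $\{x+r\omega: 0\le r\le\tau\}$, $\omega\in\sigma$, follows directly from the definition of a finite cone (a closed ball intersected with a circular cone of half-angle $\kappa/2$), not from convexity alone.
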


Let $0<\lambda <n$. The Riesz potential operator $\mathcal{I}_{\lambda }$ is
defined by 
\begin{equation*}
\mathcal{I}_{\lambda }f(x)=\int_{{\mathbb{R}^{n}}}\frac{f(y)}{%
	|x-y|^{n-\lambda }}dy.
\end{equation*}%
Let $p^{\ast }$ denote the Sobolev exponent determined by $\frac{1}{p^{\ast }%
}=\frac{1}{p}-\frac{\lambda }{n}.$ The following statement plays a crucial
role in our embedding results; see \cite[Theorem 2.3]{LiYang96}.

\begin{theorem}
	\label{result3}Let $0<\lambda <n$, $0<q_{0}\leq q_{1}\leq \infty $ and $1<p<%
	\frac{n}{\lambda }$. If 
	\begin{equation*}
	\lambda -\frac{n}{p}<\alpha <n-\frac{n}{p},
	\end{equation*}%
	then $\mathcal{I}_{\lambda }$ is bounded from $\dot{K}_{{p}}^{{\alpha ,}%
		q_{0}}({\mathbb{R}^{n}})$ into $\dot{K}_{p^{\ast }}^{{\alpha ,}q_{1}}({%
		\mathbb{R}^{n}})$. Also $\mathcal{I}_{\lambda }$ is bounded from\ $K_{{p}}^{{%
			\alpha ,}q_{0}}({\mathbb{R}^{n}})$ into $K_{p^{\ast }}^{{\alpha ,}q_{1}}({%
		\mathbb{R}^{n}})$.
\end{theorem}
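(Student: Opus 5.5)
Our plan is to prove the boundedness of $\mathcal{I}_{\lambda}$ on Herz spaces by decomposing $f$ dyadically and comparing the dyadic pieces with the classical Hardy--Littlewood--Sobolev inequality $\|\mathcal{I}_{\lambda}g\|_{p^{\ast}}\lesssim\|g\|_{p}$, valid for $1<p<n/\lambda$. Write $f=\sum_{j\in\mathbb{Z}}f\chi_{j}$ and, for each $k$,
\begin{equation*}
\|(\mathcal{I}_{\lambda}f)\chi_{k}\|_{p^{\ast}}\leq \sum_{j\leq k-2}\|(\mathcal{I}_{\lambda}(f\chi_{j}))\chi_{k}\|_{p^{\ast}}+\sum_{|j-k|\leq 1}\|\mathcal{I}_{\lambda}(f\chi_{j})\|_{p^{\ast}}+\sum_{j\geq k+2}\|(\mathcal{I}_{\lambda}(f\chi_{j}))\chi_{k}\|_{p^{\ast}}.
\end{equation*}
The middle (local) term is handled directly by the Hardy--Littlewood--Sobolev inequality: it is bounded by $c\sum_{|j-k|\le 1}\|f\chi_{j}\|_{p}$, and after multiplying by $2^{k\alpha}$ it contributes at most $c\|f\|_{\dot{K}_{p}^{\alpha,q_{0}}}$, even in $\ell^{q_0}$.

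For the far terms we exploit separation of supports. If $j\leq k-2$, $x\in R_{k}$, $y\in R_{j}$, then $|x-y|\approx 2^{k}$, so by H\"older
\begin{equation*}
|\mathcal{I}_{\lambda}(f\chi_{j})(x)|\lesssim 2^{k(\lambda-n)}2^{jn/p'}\|f\chi_{j}\|_{p},
\end{equation*}
and therefore $\|(\mathcal{I}_{\lambda}(f\chi_{j}))\chi_{k}\|_{p^{\ast}}\lesssim 2^{k(\lambda-n+n/p^{\ast})}2^{jn/p'}\|f\chi_j\|_p=2^{(j-k)n/p'}\|f\chi_j\|_p$, using $n/p^{\ast}=n/p-\lambda$. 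Multiplying by $2^{k\alpha}$ gives $2^{(j-k)(n/p'-\alpha)}\,2^{j\alpha}\|f\chi_j\|_p$, a geometrically decaying kernel precisely because $\alpha<n-n/p$. Symmetrically, for $j\geq k+2$ we have $|x-y|\approx 2^{j}$, and the same computation yields $2^{k\alpha}\|\cdots\|_{p^{\ast}}\lesssim 2^{(k-j)(\alpha+n/p^{\ast})}\,2^{j\alpha}\|f\chi_j\|_p$. This decays because $\alpha>-n/p^{\ast}=\lambda-n/p$, which is exactly the lower bound in the hypothesis.

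It then remains to sum in $\ell^{q_{1}}$. Since both far sequences are discrete convolutions of $(2^{j\alpha}\|f\chi_j\|_p)_j$ with summable geometric kernels, Young's inequality (or, for $q_1<1$, the $q_1$-triangle inequality applied termwise) gives an $\ell^{q_0}$ bound, and hence an $\ell^{q_{1}}$ bound by $\ell^{q_0}\hookrightarrow\ell^{q_1}$, i.e.\ \eqref{herz-emb}. The nonhomogeneous case follows in the same way, treating $B_{0}$ as a single block. For it, the contribution of $f\chi_{B_0}$ on $R_k$ with $k\geq 1$ is bounded as in the case $j\le k-2$, and the contribution of $\sum_{j\ge 1}f\chi_j$ on $B_0$ is estimated via HLS on the nearby blocks and by the $j\ge k+2$ bound for the rest.

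The main obstacle is bookkeeping rather than any single estimate: we must ensure that the quasi-norm case $q_0<1$ is handled by the $\min(1,q)$-triangle inequality, and that the borderline exponents match exactly. Apart from that, the argument is standard, as in \cite{LiYang96}.
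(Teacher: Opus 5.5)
Your proof is correct. It is the standard three-range dyadic argument behind \cite[Theorem 2.3]{LiYang96}: nearby blocks are handled by Hardy--Littlewood--Sobolev, far blocks by the kernel size, which produces geometric factors that decay exactly when $\lambda-\frac{n}{p}<\alpha<n-\frac{n}{p}$, followed by a discrete Young or $q_0$-triangle summation and $\ell^{q_0}\hookrightarrow\ell^{q_1}$. The paper does not reprove this result but only cites that reference. One small point to tidy in the nonhomogeneous case: $B_0$ is adjacent to $R_1$, so the $k=1$ interaction belongs in the HLS (local) term rather than the separated-support estimate, which your indexing of $B_0$ as block $0$ already allows.
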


We shall also need the following lemma, which is essentially a consequence
of Hardy's inequality in the sequence space $\ell^{q}$ together with H\"{o}%
lder's inequality.

\begin{lemma}
	\label{key1} Let $\Omega \subset \mathbb{R}^{n}$ be a domain satisfying the
	cone condition, and let $m\in \mathbb{N}_{0}$. Assume that 
	$1\leq p<\infty,1\leq q\leq \infty, 1\leq r<\infty,$
	$\alpha _{2}\in \mathbb{R}$, $\alpha _{1}>-\frac{n}{q}$, and 
	\begin{equation*}
	\frac{n}{p}-\frac{n}{q}+\alpha _{2}-\alpha _{1}\leq m\leq n.
	\end{equation*}
	Let $f\in \dot{K}_{p,m}^{\alpha _{2},r}(\Omega )$, and let $\delta$ denote
	the dimension of the cone $C$. Define 
	\begin{equation}
	\digamma _{k,\beta }(x) = \int_{B_{x,\delta ,k}} \frac{|D^{\beta }f(y)|}{%
		|x-y|^{n-m}}\,dy, \qquad x\in \Omega, \quad |\beta |\leq m,  \label{integral}
	\end{equation}
	where 
	\begin{equation*}
	B_{x,\delta ,k} = C_{x}\cap \{y:2^{k+2}<|x-y|\leq \delta \}.
	\end{equation*}
	Then 
	\begin{equation}
	\sum_{\substack{ k\in \mathbb{Z}  \\ \delta >2^{k-2}}} 2^{k\alpha _{1}r} %
	\big\| \digamma _{k,\beta }\chi _{R_{k}\cap \Omega } \big\|_{q}^{r} \lesssim
	\delta ^{(m-\frac{n}{p}+\frac{n}{q}-\alpha _{2}+\alpha _{1})r} \big\|f\big\|%
	_{\dot{K}_{p,m}^{\alpha _{2},\nu }(\Omega )}^{r},  \label{sum1}
	\end{equation}
	where 
	\begin{equation*}
	\nu = 
	\begin{cases}
	\infty , & \text{if } m>\dfrac{n}{p}-\dfrac{n}{q}+\alpha _{2}-\alpha _{1}, \\%
	[1ex] 
	r, & \text{if } m=\dfrac{n}{p}-\dfrac{n}{q}+\alpha _{2}-\alpha _{1},%
	\end{cases}%
	\end{equation*}
	and the implicit constant is independent of $\delta$. Moreover, 
	\begin{equation*}
	\sup_{\substack{ k\in \mathbb{Z}  \\ \delta >2^{k-2}}} 2^{-k\frac{n}{q}} \big\| %
	\digamma _{k,\beta }\chi _{R_{k}\cap \Omega } \big\|_{q} \lesssim \delta ^{m-%
		\frac{n}{p}-\alpha _{2}} \big\|f\big\|_{\dot{K}_{p,m}^{\alpha _{2},s}(\Omega
		)},
	\end{equation*}
	where 
	\begin{equation*}
	s= 
	\begin{cases}
	\infty , & \text{if } m>\dfrac{n}{p}+\alpha _{2}, \\[1ex] 
	1, & \text{if } m=\dfrac{n}{p}+\alpha _{2},%
	\end{cases}%
	\end{equation*}
	and the implicit constant is independent of $\delta$.
\end{lemma}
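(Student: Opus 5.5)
Fix $k$ with $\delta>2^{k-2}$ and $x\in R_k\cap\Omega$. For $y\in B_{x,\delta,k}$ we have $|x-y|>2^{k+2}\geq 4|x|$, so $|y|\approx|x-y|$; more precisely $\tfrac34|x-y|\le|y|\le\tfrac54|x-y|$. I will decompose $B_{x,\delta,k}$ dyadically in $|y|$: write $B_{x,\delta,k}\subset\bigcup_{j}\,(R_j\cap\Omega)$ where $j$ runs over $k+2\le j\le j_\delta$, with $2^{j_\delta}\approx\delta$ (up to a bounded shift). On $R_j$ we have $|x-y|\approx 2^j$. Hölder's inequality then gives, uniformly in $x\in R_k\cap\Omega$,
\[
\digamma_{k,\beta}(x)\lesssim\sum_{j=k+1}^{j_\delta}2^{-j(n-m)}\int_{R_j\cap\Omega}|D^\beta f|\,dy\lesssim\sum_{j=k+1}^{j_\delta}2^{j(m-\frac np)}\|(D^\beta f)\chi_{R_j\cap\Omega}\|_p .
\]
Since the bound does not depend on $x$, we get $\|\digamma_{k,\beta}\chi_{R_k\cap\Omega}\|_q\lesssim 2^{k\frac nq}\sum_{j= k+1}^{j_\delta}2^{j(m-\frac np-\alpha_2)}a_j$, where $a_j=2^{j\alpha_2}\|(D^\beta f)\chi_{R_j\cap\Omega}\|_p\le$ the $j$-th term of the $\dot K^{\alpha_2}_{p,m}$ norm.

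Multiplying by $2^{k\alpha_1}$ and setting $\gamma=m-\frac np+\frac nq-\alpha_2+\alpha_1\ge 0$, I would write
\[
2^{k\alpha_1}\|\digamma_{k,\beta}\chi_{R_k\cap\Omega}\|_q\lesssim\sum_{j=k+1}^{j_\delta}2^{(k-j)(\alpha_1+\frac nq)}\,2^{j\gamma}a_j .
\]
Since $\alpha_1+\frac nq>0$, this is a discrete Hardy-type convolution with a geometrically decaying kernel in $j-k\ge1$. In the case $\gamma>0$, I bound $a_j\le\|f\|_{\dot K^{\alpha_2,\infty}_{p,m}}$. The sum is then $\lesssim\|f\|_{\dot K^{\alpha_2,\infty}_{p,m}}\sum_{j\le j_\delta}2^{(k-j)(\alpha_1+n/q)}2^{j\gamma}$. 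If $\gamma<\alpha_1+n/q$, this is dominated by $2^{k\gamma}$. Then $\sum_{k\lesssim j_\delta}2^{k\gamma r}\lesssim\delta^{\gamma r}$. Otherwise the inner sum is dominated by $2^{k(\alpha_1+n/q)}\delta^{\gamma-\alpha_1-n/q}$ (possibly with a harmless logarithm in the equality case, which I would absorb by splitting the exponent slightly), and summing over $k\le j_\delta$ again gives $\delta^{\gamma r}$. In the case $\gamma=0$, Young's inequality for sequences (Hardy's inequality in $\ell^r$) gives $\sum_k(\cdot)^r\lesssim\sum_j a_j^r$, i.e. the $\nu=r$ case.

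For the second estimate, I multiply instead by $2^{-kn/q}$. This yields
\[
2^{-k\frac nq}\|\digamma_{k,\beta}\chi_{R_k\cap\Omega}\|_q\lesssim\sum_{j\le j_\delta}2^{j(m-\frac np-\alpha_2)}a_j .
\]
If $m>\frac np+\alpha_2$, bound $a_j$ by the $\ell^\infty$ norm and sum the geometric series up to $j_\delta$ to get $\delta^{m-n/p-\alpha_2}$. If $m=\frac np+\alpha_2$, the sum is at most $\sum_j a_j=\|f\|_{\dot K^{\alpha_2,1}_{p,m}}$ with $\delta^0=1$. Both bounds are uniform in $k$, so taking the supremum is fine.

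The main obstacle is bookkeeping the geometry: ensuring the comparability $|y|\approx|x-y|$ on $B_{x,\delta,k}$, and that the $j$-range is capped by $\log_2\delta$ so that constants do not depend on $\delta$. The subcase where $\gamma$ is large compared with $\alpha_1+n/q$ also needs care; there the geometric sum is controlled by its endpoint at $\delta$ rather than at $2^k$, which still produces exactly $\delta^{\gamma}$ after the sum in $k$.
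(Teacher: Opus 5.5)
Your proposal is correct and is essentially the paper's proof: the same covering of $B_{x,\delta,k}$ by the annuli $R_j$ with $k+2\le j\lesssim \log_2\delta$, the same H\"older step on each annulus, and the same three-way case split. Your comparison of $\gamma$ with $\alpha_1+\frac{n}{q}$ is exactly the paper's split according to the sign of $\mu=m-\frac{n}{p}-\alpha_2=\gamma-\alpha_1-\frac{n}{q}$, and the endpoint $\gamma=0$ is handled by the Hardy-type inequality of Lemma~\ref{lq-inequality}, as in the paper. The only small difference is that you use the two-sided comparability $|x-y|\approx|y|\approx 2^j$ rather than just the lower bound $|x-y|\geq 2^{i-2}$, so your argument never actually uses the hypothesis $m\leq n$, which the paper needs at that step.
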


\begin{proof}
	Let $x\in R_{k}$, where $k\in \mathbb{Z}$ and $\delta >2^{k-2}$. Choose $%
	j\in \mathbb{Z}$ such that 
	$2^{j-1}\leq 5\delta <2^{j}$, and fix this integer throughout the paper. By similarity, it suffices to
	prove \eqref{sum1}. Observe that 
	\begin{equation*}
	V_{k,\delta}=\{y\in \mathbb{R}^{n}:2^{k+1}\leq |y|<5\delta \} \subset
	\bigcup_{i=k+2}^{j}R_{i}.
	\end{equation*}
	Hence, using \eqref{integral}, we obtain 
	\begin{align*}
	\digamma _{k,\beta }(x) \leq& \int_{V_{k,\delta} } \frac{%
		|D^{\beta }f(y)|}{|x-y|^{n-m}} \chi _{\Omega }(y)\,dy \\
	\leq& \sum_{i=k+2}^{j} \int_{R_{i}\cap \Omega } \frac{|D^{\beta }f(y)|}{%
		|x-y|^{n-m}}\,dy \\
	\lesssim& \sum_{i=k+2}^{j} 2^{i(m-n)} \int_{R_{i}\cap \Omega }|D^{\beta
	}f(y)|\,dy,
	\end{align*}
	where the last estimate follows from the fact that $m\leq n$ and 
	\begin{equation*}
	|x-y| \geq |y|-|x| \geq 2^{i-2}, \qquad x\in R_{k}, \quad y\in R_{i}, \quad
	i\in \{k+2,\dots ,j\}.
	\end{equation*}
	The implicit constant is independent of $k$ and $\delta$. Applying
	H\"older's inequality yields 
	\begin{equation}
	\digamma _{k,\beta }(x) \lesssim \sum_{i=k+2}^{j} 2^{i\mu } 2^{i\alpha _{2}} %
	\big\| (D^{\beta }f)\chi _{R_{i}\cap \Omega } \big\|_{p},  \label{sum2}
	\end{equation}
	where 
	\begin{equation*}
	\mu =m-\frac{n}{p}-\alpha _{2}.
	\end{equation*}
	
	We now distinguish three cases.
	
	\medskip
	
	\noindent \textit{Case 1.} $\mu <0$. Taking the $L^{q}(R_{k}\cap \Omega )$%
	-norm on both sides of \eqref{sum2} and using 
	\begin{equation}
	\big\| \chi _{R_{k}\cap \Omega } \big\|_{q} \lesssim 2^{\frac{kn}{q}},
	\label{est-norm}
	\end{equation}
	we obtain 
	\begin{equation}
	2^{k\alpha _{1}} \big\| \digamma _{k,\beta }\chi _{R_{k}\cap \Omega } \big\|%
	_{q} \lesssim 2^{k(m-\frac{n}{p}+\frac{n}{q}-\alpha _{2}+\alpha _{1})}
	\sum_{i\geq k+2} 2^{(i-k)\mu } 2^{i\alpha _{2}} \big\| (D^{\beta }f)\chi
	_{R_{i}\cap \Omega } \big\|_{p}.  \label{J-2,4}
	\end{equation}
	
	First, suppose that 
	\begin{equation*}
	m=\frac{n}{p}-\frac{n}{q}+\alpha _{2}-\alpha _{1}.
	\end{equation*}
	Since $\mu<0$, Lemma \ref{lq-inequality} implies that the $\ell ^{r}$-norm
	of the right-hand side of \eqref{J-2,4} is bounded by 
	\begin{equation*}
	c\, \big\|f\big\|_{\dot{K}_{p,m}^{\alpha _{2},r}(\Omega )}^{r}.
	\end{equation*}
	
	Now assume that 
	\begin{equation*}
	m>\frac{n}{p}-\frac{n}{q}+\alpha _{2}-\alpha _{1}.
	\end{equation*}
	Then 
	\begin{align*}
	\sum_{i\geq k+2} 2^{(i-k)\mu } 2^{i\alpha _{2}} \big\| (D^{\beta }f)\chi
	_{R_{i}\cap \Omega } \big\|_{p} \lesssim& \big\|f\big\|_{\dot{K}%
		_{p,m}^{\alpha _{2},\infty }(\Omega )} \sum_{i\geq k+2}2^{(i-k)\mu } \\
	\lesssim& \big\|f\big\|_{\dot{K}_{p,m}^{\alpha _{2},\infty }(\Omega )},
	\end{align*}
	because $\mu<0$. Consequently, 
	\begin{align*}
	\sum_{\substack{ k\in \mathbb{Z}  \\ \delta >2^{k-2}}} 2^{k\alpha _{1}r} %
	\big\| \digamma _{k,\beta }\chi _{R_{k}\cap \Omega } \big\|_{q}^{r}
	\lesssim& \big\|f\big\|_{\dot{K}_{p,m}^{\alpha _{2},\infty }(\Omega )}^{r}
	\sum_{\substack{ k\in \mathbb{Z}  \\ \delta >2^{k-2}}} 2^{k(m-\frac{n}{p}+%
		\frac{n}{q}-\alpha _{2}+\alpha _{1})r} \\
	\lesssim& \delta ^{(m-\frac{n}{p}+\frac{n}{q}-\alpha _{2}+\alpha _{1})r} %
	\big\|f\big\|_{\dot{K}_{p,m}^{\alpha _{2},\infty }(\Omega )}^{r}.
	\end{align*}
	
	\medskip
	
	\noindent \textit{Case 2.} $\mu >0$. Since $\mu>0$, we have 
	\begin{align*}
	\digamma _{k,\beta }(x) \lesssim& \delta ^{\mu } \sum_{i=-\infty }^{j}
	2^{(i-j)\mu } 2^{i\alpha _{2}} \big\| (D^{\beta }f)\chi _{R_{i}\cap \Omega } %
	\big\|_{p} \\
	\lesssim& \delta ^{\mu } \sup_{i\in \mathbb{Z}} \left( 2^{i\alpha _{2}} %
	\big\| (D^{\beta }f)\chi _{R_{i}\cap \Omega } \big\|_{p} \right) \\
	\lesssim& \delta ^{\mu } \big\|f\big\|_{\dot{K}_{p,m}^{\alpha _{2},\infty
		}(\Omega )}.
	\end{align*}
	Therefore, by \eqref{est-norm}, 
	\begin{align*}
	\sum_{\substack{ k\in \mathbb{Z}  \\ \delta >2^{k-2}}} 2^{k\alpha _{1}r} %
	\big\| \digamma _{k,\beta }\chi _{R_{k}\cap \Omega } \big\|_{q}^{r}
	\lesssim& \delta ^{\mu r} \big\|f\big\|_{\dot{K}_{p,m}^{\alpha _{2},\infty
		}(\Omega )}^{r} \sum_{\substack{ k\in \mathbb{Z}  \\ \delta >2^{k-2}}}
	2^{kr(\alpha _{1}+\frac{n}{q})} \\
	\lesssim& \delta ^{(m-\frac{n}{p}+\frac{n}{q}-\alpha _{2}+\alpha _{1})r} %
	\big\|f\big\|_{\dot{K}_{p,m}^{\alpha _{2},\infty }(\Omega )}^{r},
	\end{align*}
	since $\alpha _{1}>-\frac{n}{q}$.
	
	\medskip
	
	\noindent \textit{Case 3.} $\mu =0$. In this case, 
	\begin{equation*}
	\digamma _{k,\beta }(x) \lesssim \sum_{i=k+2}^{j+1} 2^{i\alpha _{2}} \big\| %
	(D^{\beta }f)\chi _{R_{i}\cap \Omega } \big\|_{p} \lesssim (j-k) \big\|f%
	\big\|_{\dot{K}_{p,m}^{\alpha _{2},\infty }(\Omega )}.
	\end{equation*}
	Hence, 
	\begin{align*}
	\sum_{\substack{ k\in \mathbb{Z}  \\ \delta >2^{k-2}}} 2^{k\alpha _{1}r} %
	\big\| \digamma _{k,\beta }\chi _{R_{k}\cap \Omega } \big\|_{q}^{r}
	\lesssim& \big\|f\big\|_{\dot{K}_{p,m}^{\alpha _{2},\infty }(\Omega )}^{r}
	\sum_{\substack{ k\in \mathbb{Z}  \\ \delta >2^{k-2}}} (j-k) 2^{kr(\alpha
		_{1}+\frac{n}{q})} \\
	\lesssim& \delta ^{(\frac{n}{q}+\alpha _{1})r} \big\|f\big\|_{\dot{K}%
		_{p,m}^{\alpha _{2},\infty }(\Omega )}^{r},
	\end{align*}
	where we again used \eqref{est-norm} and the condition $\alpha _{1}>-\frac{n%
	}{q}$.
	
	This completes the proof.
\end{proof}

Next, we present the nonhomogeneous version of Lemma \ref{key1}.

\begin{lemma}
	\label{key1 copy(1)} Under the assumptions of Lemma \ref{key1}, we have 
	\begin{equation*}
	\sum_{\substack{ k\in \mathbb{N}  \\ \delta >2^{k-2}}} 2^{k\alpha _{1}r} %
	\big\| \digamma _{k,\beta }\chi _{R_{k}\cap \Omega } \big\|_{q}^{r} \lesssim
	\delta ^{(m-\frac{n}{p}+\frac{n}{q}-\alpha _{2}+\alpha _{1})r} \big\|f\big\|%
	_{K_{p,m}^{\alpha _{2},\nu }(\Omega )}^{r}
	\end{equation*}
	for every $f\in K_{p,m}^{\alpha _{2},\nu }(\Omega )$. Moreover, 
	\begin{equation*}
	\sup_{\substack{ k\in \mathbb{N}  \\ \delta >2^{k-2}}} 2^{-k\frac{n}{q}} \big\| %
	\digamma _{k,\beta }\chi _{R_{k}\cap \Omega } \big\|_{q} \lesssim \delta ^{m-%
		\frac{n}{p}-\alpha _{2}} \big\|f\big\|_{K_{p,m}^{\alpha _{2},s}(\Omega )}
	\end{equation*}
	for every $f\in K_{p,m}^{\alpha _{2},s}(\Omega )$.
\end{lemma}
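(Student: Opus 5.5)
The plan is to repeat the proof of Lemma \ref{key1} almost line by line, now summing only over $k\in\mathbb{N}$ with $\delta>2^{k-2}$. We must check that every dyadic block entering the estimates can be controlled by the nonhomogeneous norm.

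Fix $k\in\mathbb{N}$ with $\delta>2^{k-2}$ and $x\in R_k\cap\Omega$. Take $j$ as before, with $2^{j-1}\le 5\delta<2^j$. As in Lemma \ref{key1}, the region $V_{k,\delta}$ lies in $\bigcup_{i=k+2}^{j}R_i$, and $|x-y|\ge 2^{i-2}$ there. Since $k\ge1$, every index satisfies $i\ge k+2\ge 3$. Hence the blocks $(D^\beta f)\chi_{R_i\cap\Omega}$ that occur are exactly the blocks $\widehat R_i$, $i\in\mathbb{N}$, of the nonhomogeneous norm. So \eqref{sum2} holds with $2^{i\alpha_2}\|(D^\beta f)\chi_{R_i\cap\Omega}\|_p\le \|f\|_{K^{\alpha_2,\nu}_{p,m}(\Omega)}$ in the sup case, or as the $i$-th term of the $\ell^r$ sequence defining that norm. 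Multiplying by $2^{k\alpha_1}$ and using \eqref{est-norm} then gives the analogue of \eqref{J-2,4}.

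The three cases $\mu<0$, $\mu>0$ and $\mu=0$, with $\mu=m-\frac np-\alpha_2$, are handled exactly as before.
\begin{itemize}
\item For $\mu<0$ with equality $m=\frac np-\frac nq+\alpha_2-\alpha_1$, apply the discrete Hardy inequality (Lemma \ref{lq-inequality}) to the sequence $(2^{i\alpha_2}\|(D^\beta f)\chi_{R_i\cap\Omega}\|_p)_{i\in\mathbb{N}}$ extended by zero for $i\le0$. Its $\ell^r$ norm is bounded by $\|f\|_{K^{\alpha_2,r}_{p,m}(\Omega)}$.
\item For $\mu<0$ with strict inequality, the geometric sum $\sum_{i\ge k+2}2^{(i-k)\mu}$ is bounded. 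It remains to sum $2^{k(m-\frac np+\frac nq-\alpha_2+\alpha_1)r}$ over $k\le \log_2\delta+2$, which is restricted further to $k\ge1$. This partial sum is still $\lesssim\delta^{(\cdots)r}$ because the exponent is positive.
\item For $\mu>0$, use $\sum_{i\le j}2^{(i-j)\mu}\lesssim1$, bounding $\sup_{i\ge1}$ by the $K^{\alpha_2,\infty}_{p,m}$ norm. The sum over $1\le k\lesssim\log_2\delta$ of $2^{kr(\alpha_1+n/q)}$ is $\lesssim\delta^{(\alpha_1+n/q)r}$, since $\alpha_1>-\frac nq$.
\item For $\mu=0$, the logarithmic factor $(j-k)$ is absorbed in the same way.
\end{itemize}

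The second estimate, for the supremum with weight $2^{-kn/q}$, follows by the same argument. Here one takes the $L^q$ norm against $2^{kn/q}$, so the powers of $2^k$ cancel. It remains to bound $\sum_{i=k+2}^{j}2^{i\mu}2^{i\alpha_2}\|(D^\beta f)\chi_{R_i\cap\Omega}\|_p$.
\begin{itemize}
\item If $\mu>0$, this is $\lesssim\delta^\mu\|f\|_{K^{\alpha_2,\infty}_{p,m}}$.
\item If $\mu=0$, which is the case $s=1$, it is $\lesssim\|f\|_{K^{\alpha_2,1}_{p,m}}=\delta^0\|\cdot\|$.
\end{itemize}
Again only indices $i\ge 3$ appear.

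The only point that needs care is that no block near the origin, i.e. $\widehat R_0=B_0$, is needed. This holds because $k\ge1$ forces $i\ge3$. There is one possible subtlety: when $\delta$ is small, say $\delta\le 1/2$, the index set $\{k\in\mathbb{N}:\delta>2^{k-2}\}$ is empty and the estimate is trivial. When $\delta$ is large, all bounds are uniform in $\delta$ as in the homogeneous case. So the main work is just bookkeeping the restriction $k\ge1$ in the sums $\sum_{1\le k\le \log_2\delta+2}$. Each such sum is dominated by the corresponding full sum over $k\in\mathbb{Z}$ used in Lemma \ref{key1}.
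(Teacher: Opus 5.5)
Your proposal is correct and matches the paper's intended argument. The paper states this lemma without a separate proof, as the nonhomogeneous counterpart of Lemma~\ref{key1}. The one point that needs checking is the one you isolate: for $k\ge 1$ only annuli $R_i$ with $i\ge k+2\ge 3$ enter \eqref{sum2}, so every block is a block of the $K^{\alpha_2,\cdot}_{p,m}(\Omega)$ norm. The cases $\mu<0$, $\mu>0$, $\mu=0$ then go through verbatim, with the restricted $k$-sums dominated by the full sums over $k\in\mathbb{Z}$.
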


\begin{remark}
	Let $\Omega \subset \mathbb{R}^{n}$ be a domain satisfying the cone
	condition. Let 
	$
	1\leq p<\infty, 1\leq q\leq \infty,  \alpha \in \mathbb{R}.
	$
	Assume that 
	\begin{equation*}
	\frac{n}{p}+\alpha \in \mathbb{N}_{0}, \qquad \frac{n}{p}+\alpha \leq n.
	\end{equation*}
	In view of the proof of Lemma \ref{key1}, we readily obtain 
	\begin{equation*}
	\sup_{\substack{ k\in \mathbb{N}  \\ \delta >2^{k-2}}} 2^{-k \frac{n}{q}} \big\| %
	\digamma _{k,\beta }\chi _{R_{k}\cap \Omega } \big\|_{q} \lesssim (\log
	_{2}(5\delta )) \big\|f\big\|_{K_{p,\frac{n}{p}+\alpha }^{\alpha
			,\infty }(\Omega )}
	\end{equation*}
	for every $f\in K_{p,\frac{n}{p}+\alpha }^{\alpha ,\infty }(\Omega )$,
	where the implicit constant is independent of $\delta$.
\end{remark}

The case $m>n$ in Lemma \ref{key1} is treated in the following lemma.

\begin{lemma}
	\label{key1 copy(4)} Let $\Omega \subset \mathbb{R}^{n}$ be a domain
	satisfying the cone condition, and let $m\in \mathbb{N}_{0}$. Assume that 
	$
	1\leq p<\infty,  1\leq q\leq \infty,  \alpha _{2}\leq n-\frac{n}{%
		p}, \alpha _{1}>-\frac{n}{q},
	$
	and let $m>n$. Let $f\in \dot{K}_{p,m}^{\alpha _{2},\infty }(\Omega )$, and
	let $\delta$ denote the dimension of the cone $C$. Let $\digamma _{k,\beta }$
	be defined as in \eqref{integral}. Then 
	\begin{equation}
	\sum_{\substack{ k\in \mathbb{Z}  \\ \delta >2^{k-2}}} 2^{k\alpha _{1}r} %
	\big\| \digamma _{k,\beta }\chi _{R_{k}\cap \Omega } \big\|_{q}^{r} \lesssim
	\delta ^{(m-\frac{n}{p}+\frac{n}{q}-\alpha _{2}+\alpha _{1})r} \big\|f\big\|%
	_{\dot{K}_{p,m}^{\alpha _{2},\infty }(\Omega )}^{r}.  \label{key1-est}
	\end{equation}
	In addition, 
	\begin{equation*}
	\sup_{\substack{ k\in \mathbb{Z}  \\ \delta >2^{k-2}}} 2^{-k\frac{n}{q}} \big\| %
	\digamma _{k,\beta }\chi _{R_{k}\cap \Omega } \big\|_{q} \lesssim \delta ^{m-%
		\frac{n}{p}-\alpha _{2}} \big\|f\big\|_{\dot{K}_{p,m}^{\alpha _{2},s}(\Omega
		)},
	\end{equation*}
	where 
	\begin{equation*}
	s= 
	\begin{cases}
	\infty , & \text{if } \alpha _{2}<n-\dfrac{n}{p}, \\[1ex] 
	1, & \text{if } \alpha _{2}=n-\dfrac{n}{p}.%
	\end{cases}%
	\end{equation*}
\end{lemma}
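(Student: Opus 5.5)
We follow the proof of Lemma \ref{key1}. The only new feature is that $m>n$, so the kernel $|x-y|^{m-n}$ is now increasing in $|x-y|$. Fix $x\in R_{k}\cap \Omega$ with $\delta>2^{k-2}$ and let $j$ be the integer with $2^{j-1}\leq 5\delta<2^{j}$. For $y\in B_{x,\delta,k}$ we have $|x-y|\leq \delta$, so the factor $|x-y|^{m-n}$ is bounded by $\delta^{m-n}$ rather than by $2^{i(m-n)}$. As in Lemma \ref{key1}, $y$ ranges over $\{2^{k+1}\leq |y|<5\delta\}\subset\bigcup_{i=k+2}^{j}R_{i}$. Using H\"older's inequality on each annulus $R_i$ we then get
\begin{equation*}
\digamma_{k,\beta}(x)\lesssim \delta^{m-n}\sum_{i=k+2}^{j}2^{i(n-\frac{n}{p}-\alpha_{2})}\,2^{i\alpha_{2}}\big\|(D^{\beta}f)\chi_{R_{i}\cap\Omega}\big\|_{p}.
\end{equation*}
Since $\alpha_{2}\leq n-\frac{n}{p}$, the exponent $\mu'=n-\frac{n}{p}-\alpha_{2}$ is nonnegative.

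\emph{Case $\mu'>0$.} The geometric sum is dominated by its top term, which gives
\begin{equation*}
\digamma_{k,\beta}(x)\lesssim \delta^{m-n}2^{j\mu'}\|f\|_{\dot{K}_{p,m}^{\alpha_{2},\infty}(\Omega)}\approx \delta^{m-\frac{n}{p}-\alpha_{2}}\|f\|_{\dot{K}_{p,m}^{\alpha_{2},\infty}(\Omega)},
\end{equation*}
uniformly in $x$ and $k$. Next we take the $L^{q}(R_{k}\cap\Omega)$ norm and use \eqref{est-norm}. We multiply by $2^{k\alpha_{1}}$ and sum in $\ell^{r}$ over $k$ with $2^{k}<4\delta$. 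Because $\alpha_{1}>-\frac{n}{q}$, the series $\sum_{2^{k}<4\delta}2^{kr(\alpha_{1}+\frac{n}{q})}$ converges and is $\lesssim\delta^{(\alpha_{1}+\frac{n}{q})r}$. This yields \eqref{key1-est}. The same bound multiplied by $2^{-k\frac{n}{q}}\|\chi_{R_{k}}\|_{q}\lesssim 1$ gives the supremum estimate with $s=\infty$.

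\emph{Case $\mu'=0$, i.e.\ $\alpha_{2}=n-\frac{n}{p}$.} This is the main obstacle. With the $\ell^{\infty}$ norm, the sum over $i$ from $k+2$ to $j$ produces a factor $(j-k)$. For \eqref{key1-est} this logarithmic factor is harmless: $\sum_{2^{k}<4\delta}(j-k)2^{kr(\alpha_{1}+\frac{n}{q})}\lesssim\delta^{(\alpha_{1}+\frac{n}{q})r}$, as in Case 3 of Lemma \ref{key1}. So \eqref{key1-est} holds with the $\infty$-norm for every $\alpha_{2}\leq n-\frac np$, and we will check that the exponent of $\delta$ still comes out as $m-\frac{n}{p}+\frac{n}{q}-\alpha_{2}+\alpha_{1}$. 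The supremum estimate, however, must be uniform in $k$, and $(j-k)$ is unbounded as $k\to-\infty$. Here we instead bound the sum over $i$ directly:
\begin{equation*}
\sum_{i}2^{i\alpha_{2}}\big\|(D^{\beta}f)\chi_{R_{i}\cap\Omega}\big\|_{p}\leq\|f\|_{\dot{K}_{p,m}^{\alpha_{2},1}(\Omega)}.
\end{equation*}
This gives $\digamma_{k,\beta}\lesssim\delta^{m-n}\|f\|_{\dot{K}^{\alpha_{2},1}_{p,m}}=\delta^{m-\frac{n}{p}-\alpha_{2}}\|f\|_{\dot{K}^{\alpha_{2},1}_{p,m}}$, which explains the choice $s=1$. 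We conclude as before via \eqref{est-norm}.
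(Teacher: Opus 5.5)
Your proposal is correct and follows the paper's proof essentially step by step. Both arguments use H\"older on the annuli $R_i$, $k+2\le i\le j$, to get the pointwise bound $\digamma_{k,\beta}(x)\lesssim\delta^{m-n}\sum_{i=k+2}^{j}2^{i(n-\frac{n}{p}-\alpha_2)}2^{i\alpha_2}\|(D^{\beta}f)\chi_{R_i\cap\Omega}\|_p$, then split into $\alpha_2<n-\frac{n}{p}$ (geometric sum dominated by its top term) and $\alpha_2=n-\frac{n}{p}$ (the harmless factor $(j-k)$), and sum using $\alpha_1>-\frac{n}{q}$. Your explicit treatment of the supremum estimate, including the $\ell^1$ bound that avoids the unbounded factor $(j-k)$ when $s=1$, fills in what the paper leaves to ``similarity.''
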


\begin{proof}
	By similarity, it suffices to prove \eqref{key1-est}. Let $j\in \mathbb{Z}$
	be chosen as in Lemma \ref{key1}. Since $m>n$, we have 
	$
	|x-y|^{m-n}\leq 2^{(j+1)(m-n)},  x\in R_{k},  y\in R_{i}, 
	k+2\leq i\leq j.
	$
	Therefore, 
	\begin{equation*}
	\digamma _{k,\beta }(x) \lesssim \delta ^{m-n} \sum_{i=k+2}^{j}
	\int_{R_{i}\cap \Omega }|D^{\beta }f(y)|\,dy
	\end{equation*}
	for every $x\in R_{k}$ and every $k\in \mathbb{Z}$ satisfying $\delta
	>2^{k-2}$, where the implicit constant is independent of $x$ and $k$.
	
	Applying H\"older's inequality, we obtain 
	\begin{equation*}
	\digamma _{k,\beta }(x) \lesssim \delta ^{m-n} \sum_{i=k+2}^{j+1} 2^{i(n-%
		\frac{n}{p}-\alpha _{2})} 2^{i\alpha _{2}} \big\| (D^{\beta }f)\chi
	_{R_{i}\cap \Omega } \big\|_{p}.
	\end{equation*}
	
	We distinguish two cases.
	
	\medskip
	
	\noindent \textit{Case 1.} $\alpha _{2}<n-\frac{n}{p}$. Since $n-\frac{n}{p%
	}-\alpha _{2}>0$, we have 
	\begin{align}
	\digamma _{k,\beta }(x) \lesssim& \delta ^{m-\frac{n}{p}-\alpha _{2}}
	\sum_{i=k+2}^{j} 2^{(i-j)(n-\frac{n}{p}-\alpha _{2})} 2^{i\alpha _{2}} %
	\big\| (D^{\beta }f)\chi _{R_{i}\cap \Omega } \big\|_{p}  \notag \\
	\lesssim& \delta ^{m-\frac{n}{p}-\alpha _{2}} \big\|f\big\|_{\dot{K}%
		_{p,m}^{\alpha _{2},\infty }(\Omega )}.  \label{inequality}
	\end{align}
	Combining \eqref{inequality} with %
	\eqref{est-norm}, we obtain 
	\begin{align*}
	\sum_{\substack{ k\in \mathbb{Z}  \\ \delta >2^{k-2}}} 2^{k\alpha _{1}r} %
	\big\| \digamma _{k,\beta }\chi _{R_{k}\cap \Omega } \big\|_{q}^{r}
	\lesssim& \delta ^{(m-\frac{n}{p}-\alpha _{2})r} \big\|f\big\|_{\dot{K}%
		_{p,m}^{\alpha _{2},\infty }(\Omega )}^{r} \sum_{\substack{ k\in \mathbb{Z} 
			\\ \delta >2^{k-2}}} 2^{kr(\alpha _{1}+\frac{n}{q})} \\
	\lesssim& \delta ^{(m-\frac{n}{p}+\frac{n}{q}-\alpha _{2}+\alpha _{1})r} %
	\big\|f\big\|_{\dot{K}_{p,m}^{\alpha _{2},\infty }(\Omega )}^{r},
	\end{align*}
	since $\alpha _{1}>-\frac{n}{q}$. The implicit constant is independent of $%
	\delta$.
	
	\medskip
	
	\noindent \textit{Case 2.} $\alpha _{2}=n-\frac{n}{p}$. In this case, 
	\begin{equation*}
	\digamma _{k,\beta }(x) \lesssim \delta ^{m-n} \sum_{i=k+2}^{j+1} 2^{i\alpha
		_{2}} \big\| (D^{\beta }f)\chi _{R_{i}\cap \Omega } \big\|_{p} \lesssim
	\delta ^{m-n}(j-k) \big\|f\big\|_{\dot{K}_{p,m}^{\alpha _{2},\infty }(\Omega
		)}.
	\end{equation*}
	Consequently, 
	\begin{align*}
	\sum_{\substack{ k\in \mathbb{Z}  \\ \delta >2^{k-2}}} 2^{k\alpha _{1}r} %
	\big\| \digamma _{k,\beta }\chi _{R_{k}\cap \Omega } \big\|_{q}^{r}
	\lesssim& \delta ^{(m-n)r} \big\|f\big\|_{\dot{K}_{p,m}^{\alpha _{2},\infty
		}(\Omega )}^{r} \sum_{\substack{ k\in \mathbb{Z}  \\ \delta >2^{k-2}}} (j-k)
	2^{kr(\alpha _{1}+\frac{n}{q})} \\
	\lesssim& \delta ^{(m-n+\frac{n}{q}+\alpha _{1})r} \big\|f\big\|_{\dot{K}%
		_{p,m}^{\alpha _{2},\infty }(\Omega )}^{r},
	\end{align*}
	where \eqref{est-norm} has been used.
	
	This completes the proof.
\end{proof}

The nonhomogeneous version of Lemma \ref{key1 copy(4)} is stated below.

\begin{lemma}
	\label{key1 copy(2)} Under the assumptions of Lemma \ref{key1 copy(4)}, we
	have 
	\begin{equation*}
	\sum_{\substack{ k\in \mathbb{N}  \\ \delta >2^{k-2}}} 2^{k\alpha _{1}r} %
	\big\| \digamma _{k,\beta }\chi _{R_{k}\cap \Omega } \big\|_{q}^{r} \lesssim
	\delta ^{(m-\frac{n}{p}+\frac{n}{q}-\alpha _{2}+\alpha _{1})r} \big\|f\big\|%
	_{K_{p,m}^{\alpha _{2},\infty }(\Omega )}^{r}
	\end{equation*}
	for every $f\in K_{p,m}^{\alpha _{2},\infty }(\Omega )$.
	
	Moreover, 
	\begin{equation*}
	\sup_{\substack{ k\in \mathbb{N}  \\ \delta >2^{k-2}}} 2^{-k\frac{n}{q}} \big\| %
	\digamma _{k,\beta }\chi _{R_{k}\cap \Omega } \big\|_{q} \lesssim \delta ^{m-%
		\frac{n}{p}-\alpha _{2}} \big\|f\big\|_{K_{p,m}^{\alpha _{2},s}(\Omega )}
	\end{equation*}
	for every $f\in K_{p,m}^{\alpha _{2},s}(\Omega )$, where 
	\begin{equation*}
	s= 
	\begin{cases}
	\infty , & \text{if } \alpha _{2}<n-\dfrac{n}{p}, \\[1ex] 
	1, & \text{if } \alpha _{2}=n-\dfrac{n}{p}.%
	\end{cases}%
	\end{equation*}
\end{lemma}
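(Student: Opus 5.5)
The proof repeats the argument of Lemma \ref{key1 copy(4)}, with the homogeneous Herz norm replaced by the nonhomogeneous one. We prove the first estimate in detail; the second is analogous. Fix $k\in\mathbb{N}$ with $\delta>2^{k-2}$, $x\in R_{k}\cap\Omega$, and the integer $j$ with $2^{j-1}\le 5\delta<2^{j}$. As before, $B_{x,\delta,k}\subset V_{k,\delta}\subset\bigcup_{i=k+2}^{j}R_{i}$. Since $k\ge 1$, every index satisfies $i\ge 3$, so only annuli $R_{i}$ with $i\in\mathbb{N}$ occur. For these annuli $2^{i\alpha_{2}}\|(D^{\beta}f)\chi_{R_{i}\cap\Omega}\|_{p}\le \|f\|_{K_{p,m}^{\alpha_{2},\infty}(\Omega)}$, because $\widehat{R}_{i}=R_{i}$ for $i\ge1$. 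Since $m>n$, we have $|x-y|^{m-n}\lesssim 2^{j(m-n)}\approx\delta^{m-n}$. Hölder's inequality on each $R_{i}$ then gives
\begin{equation*}
\digamma_{k,\beta}(x)\lesssim \delta^{m-n}\sum_{i=k+2}^{j+1}2^{i(n-\frac{n}{p}-\alpha_{2})}\,2^{i\alpha_{2}}\big\|(D^{\beta}f)\chi_{R_{i}\cap\Omega}\big\|_{p}.
\end{equation*}
This bound has exactly the form used in the homogeneous case, but the sum runs only over $i\in\mathbb{N}$.

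Next we split into the two cases. If $\alpha_{2}<n-\frac{n}{p}$, the geometric factor $2^{(i-j)(n-\frac{n}{p}-\alpha_{2})}$ sums to a bounded quantity. This gives $\digamma_{k,\beta}(x)\lesssim\delta^{m-\frac{n}{p}-\alpha_{2}}\|f\|_{K_{p,m}^{\alpha_{2},\infty}(\Omega)}$ uniformly in $k$. If $\alpha_{2}=n-\frac{n}{p}$, we get instead $\digamma_{k,\beta}(x)\lesssim\delta^{m-n}(j-k)\|f\|_{K_{p,m}^{\alpha_{2},\infty}(\Omega)}$. In both cases we take the $L^{q}(R_{k}\cap\Omega)$-norm using \eqref{est-norm}, multiply by $2^{k\alpha_{1}}$, and sum the $r$-th powers over $k\in\mathbb{N}$ with $2^{k}<4\delta$. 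The sums $\sum 2^{kr(\alpha_{1}+\frac{n}{q})}$ and $\sum (j-k)2^{kr(\alpha_{1}+\frac{n}{q})}$ are dominated by their largest terms, where $2^{k}\approx\delta$, because $\alpha_{1}>-\frac{n}{q}$. Restricting the sum to $k\ge1$ only makes it smaller, so the bound $\lesssim\delta^{r(\alpha_{1}+n/q)}$ from the homogeneous proof still holds. In the second case, $(j-k)$ is bounded by a constant near the top term, which produces the stated power $\delta^{(m-\frac{n}{p}+\frac{n}{q}-\alpha_{2}+\alpha_{1})r}$.

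For the supremum estimate we use the same pointwise bound, multiplied by $2^{-kn/q}\|\chi_{R_{k}\cap\Omega}\|_{q}\lesssim1$. When $\alpha_{2}<n-\frac{n}{p}$ this gives $\delta^{m-\frac{n}{p}-\alpha_{2}}\|f\|_{K_{p,m}^{\alpha_{2},\infty}(\Omega)}$ directly. When $\alpha_{2}=n-\frac{n}{p}$, the logarithmic factor $(j-k)$ would cause trouble, so we avoid it. Instead we bound $\sum_{i}2^{i\alpha_{2}}\|(D^{\beta}f)\chi_{R_{i}\cap\Omega}\|_{p}$ by the $\ell^{1}$-sum over $i\in\mathbb{N}$, that is, by $\|f\|_{K_{p,m}^{\alpha_{2},1}(\Omega)}$. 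This yields the factor $\delta^{m-n}=\delta^{m-\frac{n}{p}-\alpha_{2}}$.

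The main obstacle is bookkeeping rather than analysis. We must check that no term with $\widehat{R}_{0}=B_{0}$ enters when $k\ge1$, and that all implicit constants are independent of $\delta$. If one also wanted to include $k=0$, the ball $B_{0}$ would require the extra term $\|(D^{\beta}f)\chi_{B_{0}\cap\Omega}\|_{p}$. That term is again controlled by the nonhomogeneous norm through Hölder's inequality on $B_{0}$.
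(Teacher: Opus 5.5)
Your proof is correct and follows the intended route: the paper gives no separate proof and simply transfers the argument of Lemma \ref{key1 copy(4)}. Your two bookkeeping points are exactly what is needed: for $k\ge 1$ only annuli $R_{i}$ with $i\ge k+2\ge 3$ occur, so the nonhomogeneous norm controls each $2^{i\alpha_{2}}\|(D^{\beta}f)\chi_{R_{i}\cap\Omega}\|_{p}$, and for $\alpha_{2}=n-\frac{n}{p}$ the supremum estimate needs the $\ell^{1}$-sum (i.e.\ $s=1$) to avoid the factor $j-k\lesssim\log_{2}(5\delta)$. Your closing aside about $k=0$ is unnecessary and slightly off, since even then $|x-y|>4$ forces $|y|>3$, so $B_{0}$ never enters.
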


\begin{remark}
	Let $\Omega \subset \mathbb{R}^{n}$ be a domain satisfying the cone
	condition. Let 
	$1\leq p<\infty,1\leq q\leq \infty,  m\in \mathbb{N}, 
	m>n$.
	In view of the proof of Lemma \ref{key1 copy(4)}, we readily obtain 
	\begin{equation*}
	\sup_{\substack{ k\in \mathbb{N}  \\ \delta >2^{k-2}}} 2^{-k\frac{n}{q}} \big\| %
	\digamma _{k,\beta }\chi _{R_{k}\cap \Omega } \big\|_{q} \lesssim (\log
	_{2}(5\delta )) \big\|f\big\|_{K_{p,m}^{n-\frac{n}{p},\infty }(\Omega
		)}
	\end{equation*}
	for every $f\in \dot{K}_{p,m}^{n-\frac{n}{p},\infty }(\Omega )$, where the
	implicit constant is independent of $\delta$.
\end{remark}
The following lemma is a Hardy-type inequality, which is essentially a
consequence of Young's inequality in the sequence space $\ell^{q}$.

\begin{lemma}
	\label{lq-inequality} \textit{\ Let $0<a<1$ and $0<q\leq \infty$. Let $%
		\{\varepsilon_{k}\}_{k\in \mathbb{Z}}$ be a sequence of nonnegative real
		numbers and define 
		\begin{equation*}
		\delta_{k} = \sum_{k\in \mathbb{Z}} a^{\,|j-k|}\varepsilon_{j}, \qquad k\in 
		\mathbb{Z}.
		\end{equation*}
		Then there exists a constant $c>0$, depending only on $a$ and $q$, such that 
		\begin{equation*}
		\|\{\delta_{k}\}_{k\in \mathbb{Z}}\|_{\ell^{q}} \leq c
		\|\{\varepsilon_{k}\}_{k\in \mathbb{Z}}\|_{\ell^{q}}.
		\end{equation*}
	}
\end{lemma}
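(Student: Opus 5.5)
The sum defining $\delta_k$ is meant to run over $j\in\mathbb{Z}$, so that $\delta_k=\sum_{j\in\mathbb{Z}}a^{|j-k|}\varepsilon_j$. I will read $\{\delta_k\}$ as the discrete convolution of $\{\varepsilon_j\}$ with the kernel $\kappa_i=a^{|i|}$, $i\in\mathbb{Z}$. The kernel is summable because $0<a<1$, and more generally
\begin{equation*}
\sum_{i\in\mathbb{Z}}a^{t|i|}=\frac{1+a^{t}}{1-a^{t}}<\infty,\qquad t>0 .
\end{equation*}
The proof then splits according to whether $q\geq 1$ or $0<q<1$. All sums involved have nonnegative terms, so interchanging the order of summation (Tonelli) is always allowed.

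\emph{Case $1\leq q\leq\infty$.} If $q=\infty$, the bound is immediate: $\delta_k\leq \sup_j\varepsilon_j\sum_i a^{|i|}$, which gives the claim with $c=\frac{1+a}{1-a}$. If $1\leq q<\infty$, I would apply Young's inequality $\|\kappa*\varepsilon\|_{\ell^q}\leq\|\kappa\|_{\ell^1}\|\varepsilon\|_{\ell^q}$. To keep the argument self-contained, one can instead use H\"older's inequality with the weights $a^{|j-k|}$:
\begin{equation*}
\delta_k^q\leq\Big(\sum_j a^{|j-k|}\Big)^{q-1}\sum_j a^{|j-k|}\varepsilon_j^q .
\end{equation*}
Summing over $k$ and interchanging the two sums gives
\begin{equation*}
\sum_k\delta_k^q\leq\Big(\frac{1+a}{1-a}\Big)^{q}\sum_j\varepsilon_j^q .
\end{equation*}

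\emph{Case $0<q<1$.} Here Young's inequality fails, but the elementary inequality $\big(\sum_j x_j\big)^q\leq\sum_j x_j^q$ holds for nonnegative $x_j$. It gives
\begin{equation*}
\delta_k^q\leq\sum_j a^{q|j-k|}\varepsilon_j^q .
\end{equation*}
Summing over $k$ and using Tonelli yields
\begin{equation*}
\|\{\delta_k\}\|_{\ell^q}^q\leq\frac{1+a^q}{1-a^q}\,\|\{\varepsilon_k\}\|_{\ell^q}^q .
\end{equation*}
This is the claim with $c=\big(\frac{1+a^q}{1-a^q}\big)^{1/q}$, which depends only on $a$ and $q$.

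There is no real obstacle in this argument. The one point needing care is to treat $q<1$ with the $q$-triangle inequality rather than with Young's inequality. It is also worth noting why the lemma suffices for its use in Lemma \ref{key1}: there the sum runs only over $i\geq k+2$, which is dominated by the full two-sided sum.
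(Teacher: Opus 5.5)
Your proof is correct and follows the route the paper indicates; the paper gives no argument beyond calling the lemma a consequence of Young's inequality in $\ell^{q}$. You read the sum correctly as running over $j$, and your H\"older/Tonelli argument for $1\le q\le\infty$ and $q$-subadditivity argument for $0<q<1$ fill in exactly that justification. One small point of phrasing: for $0<q<1$ it is only the $\ell^{1}$-kernel form of Young's inequality that fails, whereas the bound you prove, $\|\{\delta_k\}\|_{\ell^q}\le \|\{a^{|i|}\}_{i\in\mathbb{Z}}\|_{\ell^q}\|\{\varepsilon_k\}\|_{\ell^q}$, is itself the discrete Young inequality valid for $q\le 1$.
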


\section{Sobolev embeddings}
In this section, after the preliminary preparations, we establish new embedding theorems for the function spaces under consideration, assuming suitable conditions on the parameters.

\begin{theorem}
	\label{embeddingsfirst} Let $\Omega \subset \mathbb{R}^{n}$ be a domain
	satisfying the cone condition, and let $m\in \mathbb{N}$. Assume that 
	$
	1<p<\infty,  1\leq r<\infty,
	$
	and 
	\begin{equation}
	\frac{n}{q} = \frac{n}{p}-m-\alpha _{1}+\alpha _{2} >0, \qquad -\frac{n}{q}%
	<\alpha _{1}\leq \alpha _{2}<n-\frac{n}{p}.  \label{sobolevassumption1}
	\end{equation}
	Assume furthermore that 
	$m>\alpha _{2}-\alpha _{1}.$
	Then 
	\begin{equation*}
	\dot{K}_{p,m}^{\alpha _{2},r}(\Omega ) \hookrightarrow \dot{K}_{q}^{\alpha
		_{1},r}(\Omega )
	\end{equation*}
	and 
	\begin{equation*}
	K_{p,m}^{\alpha _{2},r}(\Omega ) \hookrightarrow K_{q}^{\alpha
		_{1},r}(\Omega ).
	\end{equation*}
\end{theorem}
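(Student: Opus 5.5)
By Theorem \ref{density}(i), which applies because $-\frac{n}{p}<-\frac{n}{q}<\alpha_2<n-\frac{n}{p}$, it suffices to prove
\[
\|f\|_{\dot{K}_{q}^{\alpha_1,r}(\Omega)}\lesssim \|f\|_{\dot{K}_{p,m}^{\alpha_2,r}(\Omega)}
\qquad\text{for } f\in C^\infty(\Omega)\cap \dot{K}_{p,m}^{\alpha_2,r}(\Omega).
\]
A standard Fatou/limiting argument then extends the estimate to all $f$. The nonhomogeneous case is handled in the same way, using Theorem \ref{density}(ii) together with Lemma \ref{key1 copy(1)} and the nonhomogeneous parts of Theorem \ref{result3}.

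Fix $x\in R_k\cap\Omega$ and apply Lemma \ref{Sobolev-rep1} with $\tau=\delta$. This bounds $|f(x)|$ by two kinds of terms:
\[
\sum_{|\beta|\le m-1}\delta^{|\beta|-n}\int_{C_{x,\delta}}|D^\beta f|
\qquad\text{and}\qquad
\sum_{|\beta|=m}\int_{C_{x,\delta}}\frac{|D^\beta f(y)|}{|x-y|^{n-m}}\,dy .
\]
For the lower-order terms, $\delta^{|\beta|-n}\int_{C_{x,\delta}}|D^\beta f|\lesssim \mathcal{M}(D^\beta f\chi_\Omega)(x)$.

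For the top-order term, split $C_{x,\delta}$ into the near part $|x-y|\le 2^{k+2}$ and the far part $B_{x,\delta,k}$.
\begin{itemize}
\item The far part is exactly $\digamma_{k,\beta}(x)$ from \eqref{integral}. Lemma \ref{key1} applies because its hypothesis $\frac{n}{p}-\frac{n}{q}+\alpha_2-\alpha_1=m$ holds by \eqref{sobolevassumption1}. It gives $\nu=r$ and $\delta$-exponent zero, so summing over $k$ with $\delta>2^{k-2}$ yields a bound by $\|f\|^r_{\dot{K}_{p,m}^{\alpha_2,r}(\Omega)}$. If $\delta\le 2^{k-2}$, the far region is empty.
\item The near part is dominated by $\mathcal{I}_m\big(|D^\beta f|\chi_{\Omega\cap\{|y|<2^{k+3}\}}\big)(x)$. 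Here I will use Theorem \ref{result3} with $\lambda=m$. Its requirement $m<n/p$ follows from $\frac{n}{q}>0$ together with $\alpha_2\ge\alpha_1$, and $m-\frac{n}{p}<\alpha_2$ is equivalent to $-\frac{n}{q}<\alpha_1+(\alpha_2-\alpha_1)$-type conditions, which I will check. This theorem maps $\dot{K}^{\alpha_2,r}_p$ to $\dot{K}^{\alpha_2,r}_{p^*}$, where $\frac{1}{p^*}=\frac{1}{p}-\frac{m}{n}=\frac{1}{q}+\frac{\alpha_2-\alpha_1}{n}$, so $p^*\le q$.
\end{itemize}

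The main obstacle is that Theorem \ref{result3} lands in $L^{p^*}$ with weight $\alpha_2$, while the target is $L^q$ with weight $\alpha_1$. When $\alpha_1<\alpha_2$, one cannot pass from $L^{p^*}$ to $L^q$ by H\"older on annuli, because that goes the wrong direction. My plan is therefore to apply Theorem \ref{result3} not with $\lambda=m$ but with $\lambda=m-(\alpha_2-\alpha_1)$, which is positive since $m>\alpha_2-\alpha_1$; this is precisely where that hypothesis enters. For $y$ in the near region one has $|x-y|\lesssim 2^{k}\approx|x|$. The plan is to write
\[
|x-y|^{m-n}=|x-y|^{\lambda-n}\,|x-y|^{\alpha_2-\alpha_1}\lesssim 2^{k(\alpha_2-\alpha_1)}\,|x-y|^{\lambda-n}.
\]
Then the near part on $R_k$ is bounded by $2^{k(\alpha_2-\alpha_1)}\,\mathcal{I}_\lambda\big(|D^\beta f|\chi_{\widetilde{R}}\big)(x)$, where $\widetilde{R}$ is a neighbouring annulus. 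I would actually use the full $\mathcal{I}_\lambda(|D^\beta f|\chi_\Omega)$, since $\mathcal{I}_\lambda$ has a positive kernel. With this choice, $\frac{1}{p}-\frac{\lambda}{n}=\frac{1}{q}$ exactly, and the hypothesis $\lambda-\frac{n}{p}<\alpha_2<n-\frac{n}{p}$ reduces to $-\frac{n}{q}<\alpha_1$ and $\alpha_2<n-\frac{n}{p}$, both of which are assumed. The factor $2^{k(\alpha_2-\alpha_1)}$ converts the weight $\alpha_1$ on $R_k$ into $\alpha_2$, so Theorem \ref{result3} yields the bound $\|D^\beta f\|_{\dot{K}^{\alpha_2,r}_p(\Omega)}$.

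For the lower-order terms, I would similarly control $\delta^{|\beta|-n}\int_{C_{x,\delta}}|D^\beta f|$. On the near region I would use $\delta^{|\beta|-n}\lesssim |x-y|^{m-n}\delta^{|\beta|-m}$, so these terms reduce to the same Riesz-potential estimate, with a constant depending on $\delta$. On the far region I would use Lemma \ref{key1}, in which the factor $|x-y|^{m-n}$ is replaced by the constant $\delta^{|\beta|-n}$; the proof of that lemma only used $|x-y|\approx 2^i$, so it goes through. Collecting the three pieces and summing in $\ell^r$ over $k$ gives the embedding.
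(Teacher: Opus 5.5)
Your final argument is correct and is essentially the paper's proof. It uses the same ingredients: Lemma \ref{Sobolev-rep1} with $\tau=\delta$, the near/far split at $|x-y|=2^{k+2}$, the bound $|x-y|^{m-n}\lesssim 2^{k(\alpha_2-\alpha_1)}|x-y|^{\lambda-n}$ with $\lambda=m-\alpha_2+\alpha_1$ so that Theorem \ref{result3} maps $\dot{K}^{\alpha_2,r}_{p}$ into $\dot{K}^{\alpha_2,r}_{q}$, and Lemma \ref{key1} with $\nu=r$ and zero $\delta$-exponent for the far part. The differences are cosmetic: you treat all $k$ at once and absorb the lower-order terms into the top-order kernel via $\delta^{|\beta|-n}\le \delta^{|\beta|-m}|x-y|^{m-n}$ (valid because the hypotheses force $m<n$), whereas the paper bounds $J_{1,k}$ directly by H\"older on annuli. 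You should, however, delete the abandoned $\lambda=m$ attempt, since the claim there that $m<\frac{n}{p}$ follows from $\frac{n}{q}>0$ is false when $\alpha_2>\alpha_1$.
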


\begin{proof}
	By similarity, it suffices to consider the homogeneous spaces $\dot{K}%
	_{p,m}^{\alpha _{2},r}(\Omega )$. We shall use Theorem \ref{density}. Let 
	$f\in C^{\infty }(\Omega )\cap \dot{K}_{p,m}^{\alpha _{2},r}(\Omega ),$ and let $\delta$ denote the dimension of the cone $C$. We decompose $\|f\|_{%
		\dot{K}_{q}^{\alpha _{1},r}(\Omega )}^{r}$ as 
	\begin{equation*}
	\|f\|_{\dot{K}_{q}^{\alpha _{1},r}(\Omega )}^{r} = I_{1}+I_{2},
	\end{equation*}
	where 
	\begin{equation*}
	I_{1} = \sum_{\substack{ k\in \mathbb{Z}  \\ \delta \leq 2^{k+2}}}
	2^{k\alpha _{1}r} \big\| f\chi _{R_{k}\cap \Omega } \big\|_{q}^{r},\quad I_{2} = \sum_{\substack{ k\in \mathbb{Z}  \\ \delta >2^{k+2}}} 2^{k\alpha
		_{1}r} \big\| f\chi _{R_{k}\cap \Omega } \big\|_{q}^{r}.
	\end{equation*}
	
	We estimate $I_{1}$ and $I_{2}$ separately.
	
	\medskip
	
	\noindent \textbf{Estimate of $I_{1}$.} Let $x\in R_{k}\cap \Omega$, where $%
	k\in \mathbb{Z}$ and $\delta \leq 2^{k+2} $. By Lemma \ref{Sobolev-rep1},
	together with the fact that 
	$n-m-\alpha _{1}+\alpha _{2}>0$ and $ \alpha _{2}\geq \alpha _{1},$
	we obtain 
	\begin{align*}
	|f(x)| \lesssim& \sum_{|\beta |\leq m-1} \delta ^{|\beta |-n}
	\int_{C_{x,\delta }} |D^{\beta }f(y)|\,dy 
	+ \sum_{|\beta |=m} \int_{C_{x,\delta }} \frac{|D^{\beta }f(y)|}{|x-y|^{n-m}%
	}\,dy \\
	\lesssim& \sum_{|\beta |\leq m} \int_{C_{x,\delta }} \frac{|D^{\beta }f(y)|%
	} {|x-y|^{n-m-\alpha _{1}+\alpha _{2}}}\,dy.
	\end{align*}
	Hence, 
	\begin{equation*}
	|f(x)| \leq c \sum_{|\beta |\leq m} \mathcal{I}_{m-\alpha _{2}+\alpha _{1}} %
	\big( (D^{\beta }f)\chi _{\Omega } \big)(x),
	\end{equation*}
	where the constant $c>0$ is independent of $k$. Therefore, 
	\begin{align*}
	I_{1} \lesssim& \sum_{|\beta |\leq m} \sum_{\substack{ k\in \mathbb{Z}  \\ %
			\delta \leq 2^{k+2}}} 2^{k\alpha _{1}r} \big\| \mathcal{I}_{m-\alpha
		_{2}+\alpha _{1}} \big( (D^{\beta }f)\chi _{\Omega } \big) \big\|_{q}^{r} \\
	\lesssim& \sum_{|\beta |\leq m} \sum_{\substack{ k\in \mathbb{Z}  \\ \delta
			\leq 2^{k+2}}} 2^{k\alpha _{2}r} \big\| \mathcal{I}_{m-\alpha _{2}+\alpha
		_{1}} \big( (D^{\beta }f)\chi _{\Omega } \big) \big\|_{q}^{r},
	\end{align*}
	where we used the inequality 
	\begin{equation}
	\left( \sum_{|\beta |\leq m}a_{\beta } \right)^{r} \lesssim \sum_{|\beta
		|\leq m}a_{\beta }^{r}, \qquad a_{\beta }\geq 0,  \label{emb5}
	\end{equation}
	together with the fact that $2^{k\alpha _{1}} \lesssim 2^{k\alpha _{2}}$ for
	every $k\in \mathbb{Z}$ satisfying $\delta \leq 2^{k+2}$, since $\alpha
	_{2}\geq \alpha _{1}$. By Theorem \ref{result3}, there exists a constant $c>0
	$ such that 
	\begin{equation*}
	I_{1} \lesssim \sum_{|\beta |\leq m} \big\| \mathcal{I}_{m-\alpha
		_{2}+\alpha _{1}} \big( (D^{\beta }f)\chi _{\Omega } \big) \big\|_{\dot{K}%
		_{q}^{\alpha _{2},r}(\mathbb{R}^{n})}^{r} \leq c \|f\|_{\dot{K}%
		_{p,m}^{\alpha _{2},r}(\Omega )}^{r}.
	\end{equation*}
	
	\medskip
	
	\noindent \textbf{Estimate of $I_{2}$.} Let $k\in \mathbb{Z}$ be such that $%
	\delta >2^{k+2}$ and let   $x\in R_{k}\cap \Omega$. Define 
	\begin{equation*}
	J_{1,k}(x) = \sum_{|\beta |\leq m-1} \int_{C_{x,\delta }} |D^{\beta
	}f(y)|\,dy,
	\end{equation*}
	and 
	\begin{equation*}
	J_{2,k}(x) = \sum_{|\beta |=m} \int_{C_{x,\delta }} \frac{|D^{\beta }f(y)|}{%
		|x-y|^{n-m}}\,dy  .
	\end{equation*}
	
	\medskip
	
	\noindent \textbf{Estimate of $J_{1,k}$.} Since $\delta >2^{k+2}$, we have 
	\begin{equation*}
	J_{1,k}(x) \lesssim \sum_{|\beta |\leq m-1} \int_{B(0,5\delta )} |D^{\beta
	}f(y)| \chi _{\Omega }(y)\,dy.
	\end{equation*}
	Hence, 
	\begin{equation*}
	J_{1,k}(x) \lesssim \sum_{|\beta |\leq m-1} \sum_{i=-\infty }^{j}
	\int_{R_{i}\cap \Omega } |D^{\beta }f(y)|\,dy.
	\end{equation*}
	
	Applying H\"older's inequality yields 
	\begin{align*}
	J_{1,k}(x) \lesssim& \sum_{|\beta |\leq m-1} \sum_{i=-\infty }^{j} 2^{i(n-%
		\frac{n}{p})} \big\| (D^{\beta }f)\chi _{R_{i}\cap \Omega } \big\|_{p} \\
	\lesssim& \sum_{|\beta |\leq m-1} \sum_{i=-\infty }^{j} 2^{i(n-\frac{n}{p}%
		-\alpha _{2})} \sup_{i\in \mathbb{Z}} \left( 2^{i\alpha _{2}} \big\| %
	(D^{\beta }f)\chi _{R_{i}\cap \Omega } \big\|_{p} \right) \\
	\lesssim& \|f\|_{\dot{K}_{p,m}^{\alpha _{2},\infty }(\Omega )},
	\end{align*}
	where the last inequality follows from the assumption 
	$\alpha _{2}<n-\frac{n}{p}.$
	
	Consequently, 
	\begin{align*}
	\sum_{\substack{ k\in \mathbb{Z}  \\ \delta >2^{k+2}}} 2^{k\alpha _{1}r} %
	\big\| J_{1,k}\chi _{R_{k}\cap \Omega } \big\|_{q}^{r} \lesssim& \sum 
	_{\substack{ k\in \mathbb{Z}  \\ \delta >2^{k+2}}} 2^{k(\alpha _{1}+\frac{n}{%
			q})r} \|f\|_{\dot{K}_{p,m}^{\alpha _{2},r}(\Omega )}^{r} \\
	\lesssim& \|f\|_{\dot{K}_{p,m}^{\alpha _{2},r}(\Omega )}^{r},
	\end{align*}
	since $\alpha _{1}>-\frac{n}{q}$.
	
	\medskip
	
	\noindent \textbf{Estimate of $J_{2,k}$.} We will prove that 
	\begin{equation}
	\sum_{\substack{ k\in \mathbb{Z}  \\ \delta >2^{k+2}}} 2^{k\alpha _{1}r}
	\|J_{2,k}\|_{q}^{r} \lesssim \|f\|_{\dot{K}_{p,m}^{\alpha _{2},r}(\Omega
		)}^{r}.  \label{J2,k}
	\end{equation}
	
	We decompose $J_{2,k}$ as 
	\begin{align*}
	J_{2,k}(x) =& \sum_{|\beta |=m} \int_{C_{x,2^{k+2}}} \frac{|D^{\beta }f(y)|}{%
		|x-y|^{n-m}}\,dy + \sum_{|\beta |=m} \digamma _{k,\beta }(x) \\
	=:& J_{3,k}(x)+J_{4,k}(x).
	\end{align*}
	
	Since $\alpha _{2}\geq \alpha _{1}$ and $\delta >2^{k+2}$, we obtain 
	\begin{align*}
	J_{3,k}(x) \lesssim& 2^{k(\alpha _{2}-\alpha _{1})} \sum_{|\beta |=m}
	\int_{C_{x,2^{k+2}}} \frac{|D^{\beta }f(y)|} {|x-y|^{n-m-\alpha _{1}+\alpha
			_{2}}} \chi _{\Omega }(y)\,dy \\
	=& c 2^{k(\alpha _{2}-\alpha _{1})} \sum_{|\beta |=m} \mathcal{I}_{m-\alpha
		_{2}+\alpha _{1}} \big( (D^{\beta }f)\chi _{\Omega } \big)(x),
	\end{align*}
	where $c>0$ is independent of $k$. Using \eqref{emb5} together with Theorem %
	\ref{result3}, we deduce that 
	\begin{equation*}
	\sum_{\substack{ k\in \mathbb{Z}  \\ \delta >2^{k+2}}} 2^{k\alpha _{1}r} %
	\big\| J_{3,k}\chi _{R_{k}\cap \Omega } \big\|_{q}^{r} \leq c \|f\|_{\dot{K}%
		_{p,m}^{\alpha _{2},r}(\Omega )}^{r}.
	\end{equation*}
	
	Furthermore, Lemma \ref{key1} implies that 
	\begin{equation*}
	\sum_{\substack{ k\in \mathbb{Z}  \\ \delta >2^{k-2}}} 2^{k\alpha _{1}r} %
	\big\| J_{4,k}\chi _{R_{k}\cap \Omega } \big\|_{q}^{r} \lesssim \|f\|_{\dot{K%
		}_{p,m}^{\alpha _{2},r}(\Omega )}^{r}.
	\end{equation*}
	
	Combining the estimates for $J_{3,k}$ and $J_{4,k}$ yields \eqref{J2,k}.
	This completes the proof.
\end{proof}

\begin{remark}
	Let $\Omega \subset \mathbb{R}^{n}$ be a domain satisfying the cone
	condition. Theorem \ref{embeddingsfirst} includes the classical Sobolev
	inequality as a special case. Moreover, by combining Theorem \ref%
	{embeddingsfirst} with the embeddings \eqref{herz-emb}, we obtain 
	\begin{equation*}
	W_{p}^{m}(\Omega ,|\cdot |^{\alpha _{2}p}) \hookrightarrow \dot{K}%
	_{q}^{\alpha _{1},p}(\Omega ) \hookrightarrow L^{q}(\Omega ,|\cdot |^{\alpha
		_{1}q}),
	\end{equation*}
	under the assumptions of Theorem \ref{embeddingsfirst} with $r=p$. In
	particular, 
	\begin{equation*}
	W_{p}^{m}(\Omega ) \hookrightarrow \dot{K}_{q}^{0,p}(\Omega )
	\hookrightarrow L^{q}(\Omega )
	\end{equation*}
	whenever 
	$
	1<p<\infty,  0<m<\frac{n}{p},
	$
	and 
	\begin{equation*}
	\frac{n}{q}=\frac{n}{p}-m.
	\end{equation*}
\end{remark}

\begin{remark}
	Theorem \ref{embeddingsfirst} was established in \cite[Theorem 3.12]%
	{Dr-Sobolev} under the additional assumption 
	\begin{equation*}
	m-\frac{n}{p}<\alpha _{2}.
	\end{equation*}
\end{remark}

In the next theorem, we consider the case $p=q$ in Theorem \ref%
{embeddingsfirst}. .

\begin{theorem}
	\label{embedingsq=p} Let the domain $\Omega \subset \mathbb{R}^{n}$ satisfy
	the cone condition, and let $m\in \mathbb{N}$. Assume that $1\le p<\infty$, $%
	1\le r<\infty$, and 
	\begin{equation*}
	-\frac{n}{p}<\alpha_{1}\le \alpha_{2}<n-\frac{n}{p}, \qquad m\ge
	\alpha_{2}-\alpha_{1}.
	\end{equation*}
	Then 
	\begin{equation*}
	\dot{K}_{p,m}^{\alpha_{2},r}(\Omega) \hookrightarrow \dot{K}%
	_{p}^{\alpha_{1},r}(\Omega)
	\end{equation*}
	and 
	\begin{equation*}
	K_{p,m}^{\alpha_{2},r}(\Omega) \hookrightarrow K_{p}^{\alpha_{1},r}(\Omega).
	\end{equation*}
\end{theorem}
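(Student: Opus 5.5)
We follow the scheme of the proof of Theorem \ref{embeddingsfirst}, but with the Riesz potential replaced by more elementary estimates. For $p=q$ Theorem \ref{result3} is unavailable: it would require $\lambda=m-\alpha_2+\alpha_1=0$, and it also excludes $p=1$. By similarity we treat only the homogeneous case. By Theorem \ref{density}, which applies since $-\frac{n}{p}<\alpha_1\le\alpha_2<n-\frac{n}{p}$, it suffices to take $f\in C^\infty(\Omega)\cap\dot K^{\alpha_2,r}_{p,m}(\Omega)$. As before we split
\[
\|f\|^r_{\dot K^{\alpha_1,r}_p(\Omega)}=I_1+I_2,
\]
according to whether $\delta\le 2^{k+2}$ or $\delta>2^{k+2}$.

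\textbf{Estimate of $I_1$.} Here $2^{k}\gtrsim\delta$ and $\alpha_1\le\alpha_2$, so $2^{k\alpha_1}\lesssim\delta^{\alpha_1-\alpha_2}2^{k\alpha_2}$. Therefore
\[
I_1\lesssim\sum_k 2^{k\alpha_2 r}\|f\chi_{R_k\cap\Omega}\|_p^r\le\|f\|^r_{\dot K^{\alpha_2,r}_{p,m}(\Omega)}.
\]
No representation formula is needed for this part, since only the $\beta=0$ term of the norm enters.

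\textbf{Estimate of $I_2$.} For $x\in R_k\cap\Omega$ with $\delta>2^{k+2}$, we apply Lemma \ref{Sobolev-rep1} with $\tau=\delta$. The lower-order terms give $J_{1,k}(x)$. Exactly as in Theorem \ref{embeddingsfirst}, Hölder's inequality and $\alpha_2<n-\frac{n}{p}$ give $J_{1,k}(x)\lesssim\|f\|_{\dot K^{\alpha_2,\infty}_{p,m}(\Omega)}$. Summing $2^{k(\alpha_1+\frac np)r}$ over $2^k<\delta/4$ then converges because $\alpha_1>-\frac np$.

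For the top-order term we again split $J_{2,k}=J_{3,k}+J_{4,k}$, with $J_{3,k}$ the integral over $C_{x,2^{k+2}}$. The term $J_{4,k}$ is handled by Lemma \ref{key1} with $q=p$. Its hypothesis $\frac np-\frac np+\alpha_2-\alpha_1\le m$ is exactly our assumption $m\ge\alpha_2-\alpha_1$, and we need $\alpha_1>-\frac np$. The case $m>n$ is covered by Lemma \ref{key1 copy(4)}. When $m=\alpha_2-\alpha_1$ the lemma returns $\nu=r$, and otherwise $\nu=\infty$; in both cases the bound is $\lesssim\|f\|^r_{\dot K^{\alpha_2,r}_{p,m}}$ since $\dot K^{\cdot,r}\hookrightarrow\dot K^{\cdot,\infty}$.

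For $J_{3,k}$ we use that $y\in C_{x,2^{k+2}}$ forces $|y|<2^{k+3}$, so only the rings $R_i$ with $i\le k+3$ contribute. We write $J_{3,k}\le\sum_{|\beta|=m}\mathcal I_m\big((D^\beta f)\chi_{B_{k+3}\cap\Omega}\big)$ and use Young's inequality with the kernel $|z|^{m-n}\chi_{|z|\le 2^{k+2}}$, whose $L^1$ norm is $\approx 2^{km}$. This gives
\[
\|J_{3,k}\chi_{R_k}\|_p\lesssim 2^{km}\sum_{i\le k+3}\|(D^\beta f)\chi_{R_i\cap\Omega}\|_p,
\]
and hence
\[
2^{k\alpha_1}\|J_{3,k}\chi_{R_k}\|_p\lesssim 2^{k(m+\alpha_1-\alpha_2)}\sum_{i\le k+3}2^{(k-i)\alpha_2}\,2^{i\alpha_2}\|(D^\beta f)\chi_{R_i}\|_p .
\]
Since $2^k<\delta$ and $m+\alpha_1-\alpha_2\ge0$, the prefactor is $\lesssim\delta^{m+\alpha_1-\alpha_2}$. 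The remaining sum is a discrete convolution with weights $2^{(k-i)\alpha_2}$ over $i\le k+3$, which is summable in $\ell^r$ by Lemma \ref{lq-inequality} only when $\alpha_2>0$.

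\textbf{Main obstacle.} The difficulty is $J_{3,k}$ when $\alpha_2\le 0$. In that case we would instead estimate the inner rings $i\le k-2$ pointwise: there $|x-y|\approx 2^k$, and Hölder's inequality gives a sum $\sum_{i\le k}2^{(i-k)(n-\frac np)}2^{(k-i)\alpha_2}(\cdots)$, whose geometric factor decays because $\alpha_2<n-\frac np$. The neighbouring rings $|i-k|\le 3$ are handled by Young's inequality as above. Both pieces then reduce to Lemma \ref{lq-inequality}, and adding them yields the embedding.
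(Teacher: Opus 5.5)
There is a sign error in your treatment of $J_{3,k}$, and as written it leaves the case $\alpha_2>0$ unproved. After the global Young step you arrive at $2^{k(m+\alpha_1-\alpha_2)}\sum_{i\le k+3}2^{(k-i)\alpha_2}a_i$ with $a_i=2^{i\alpha_2}\|(D^\beta f)\chi_{R_i\cap\Omega}\|_p$. Here $k-i\ge -3$ and $k-i\to+\infty$ as $i\to-\infty$. So these weights form an $\ell^1$ kernel exactly when $\alpha_2<0$. For $\alpha_2>0$ they grow geometrically, and Lemma~\ref{lq-inequality} gives nothing.

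Worse, for $\alpha_2>0$ the Young step itself is vacuous. The space $\dot K^{\alpha_2,r}_p$ is not contained in $L^p$ near the origin. A top-order derivative of an element of $\dot K^{\alpha_2,r}_{p,m}(\Omega)$ may behave like $|y|^{\kappa}$ near $0$ with $-\frac np-\alpha_2<\kappa<-\frac np$ (take $\psi|x|^{\kappa+m}$ as in Example~\ref{example1}). In that case $\|(D^\beta f)\chi_{B_{k+3}\cap\Omega}\|_p=+\infty$ for every $k$. So the two cases in your write-up are interchanged: the global Young bound is what requires $\alpha_2<0$, and the real obstacle is $\alpha_2\ge 0$.

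The repair is already in your last paragraph, because that argument never uses the sign of $\alpha_2$.
\begin{itemize}
\item For $y\in R_i$ with $i\le k-2$ one has $2^{k-2}<|x-y|\le 2^{k+2}$. H\"older's inequality produces the coefficient $2^{k(m+\alpha_1-\alpha_2)}2^{(i-k)(n-\frac np-\alpha_2)}$ in front of $a_i$, which decays in $k-i$ by $\alpha_2<n-\frac np$ alone.
\item For the finitely many rings $k-1\le i\le k+3$, Young's inequality with $\|\,|z|^{m-n}\chi_{\{|z|\le 2^{k+2}\}}\|_1\approx 2^{km}$ leaves only the bounded weights $2^{(k-i)\alpha_2}$.
\item In both pieces $2^{k(m+\alpha_1-\alpha_2)}\le\delta^{m+\alpha_1-\alpha_2}$, since $2^k<\delta$ and $m\ge\alpha_2-\alpha_1$.
\end{itemize}
Use this split for every $\alpha_2$ and the proof closes.

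It is then essentially the paper's estimate of $\Lambda_2$, which splits by $|x-y|$ rather than by the ring containing $y$. The paper uses Young's inequality on $|x-y|\le 2^{k-2}$, where the cone stays in $\widetilde R_k$. It uses the pointwise H\"older bound $J^1_{4,k}$ on $2^{k-2}<|x-y|\le 2^{k+2}$. It handles $J^2_{4,k}$ by Lemmas~\ref{key1} and~\ref{key1 copy(4)}, exactly as you use them with $q=p$.

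Two features of your plan are genuine simplifications of the paper.
\begin{itemize}
\item The bound $2^{k\alpha_1}\lesssim\delta^{\alpha_1-\alpha_2}2^{k\alpha_2}$ disposes of the outer rings at once. The paper instead handles $\Lambda_1$ through the representation formula and Young's inequality, in two cases.
\item Keeping the prefactor $2^{k(m+\alpha_1-\alpha_2)}$ treats $m=\alpha_2-\alpha_1$ and $m>\alpha_2-\alpha_1$ together, which the paper separates.
\end{itemize}
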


\begin{proof}
	By similarity, it suffices to consider the spaces $\dot{K}%
	_{p,m}^{\alpha_{2},r}(\Omega)$. We use Theorem \ref{density}. Let $f\in
	C^{\infty}(\Omega)\cap \dot{K}_{p,m}^{\alpha_{2},r}(\Omega)$, and let $%
	\delta $ denote the dimension of the cone $C$. Observe that 
	\begin{equation*}
	\|f\|_{\dot{K}_{p}^{\alpha_{1},r}(\Omega)}^{r} = \Lambda_{1}+\Lambda_{2},
	\end{equation*}
	where 
	\begin{equation*}
	\Lambda_{1} = \sum_{\substack{ k\in\mathbb{Z}  \\ \delta\le 2^{k-2}}}
	2^{k\alpha_{1}r} \|f\chi_{R_{k}\cap\Omega}\|_{p}^{r}, \qquad \Lambda_{2} =
	\sum_{\substack{ k\in\mathbb{Z}  \\ \delta>2^{k-2}}} 2^{k\alpha_{1}r}
	\|f\chi_{R_{k}\cap\Omega}\|_{p}^{r}.
	\end{equation*}
	
	Let $J_{1,k}$ and $J_{2,k}$ be defined as in Theorem \ref{embeddingsfirst}.
	
	\medskip
	
	\noindent \textbf{Estimate of $\Lambda_{1}$.} We distinguish two cases.
	
	\medskip
	
	\noindent \textit{Case 1.} $m>\alpha_{2}-\alpha_{1}$. Let $x\in
	R_{k}\cap\Omega$, $k\in\mathbb{Z}$, and assume that $\delta\le 2^{k-2}$.
	Then $C_{x,\delta}\subset \widetilde{R}_{k}$, where 
	\begin{equation*}
	\widetilde{R}_{k} = \{z:2^{k-2}\le |z|\le 2^{k+1}\}.
	\end{equation*}
	Hence 
	\begin{align*}
	J_{1,k}(x) &\le \sum_{|\beta|\le m-1} \int_{\overline{B(x,\delta)}}
	|D^{\beta}f(y)| \chi_{\widetilde{R}_{k}\cap\Omega}(y)\,dy \\
	&= \sum_{|\beta|\le m-1} \Big( |D^{\beta}f| \chi_{\widetilde{R}%
		_{k}\cap\Omega} \Big) * \chi_{\overline{B(0,\delta)}}(x).
	\end{align*}
	
	By Young's inequality, 
	\begin{align*}
	\Big\| |D^{\beta}f| \chi_{\widetilde{R}_{k}\cap\Omega} * \chi_{\overline{%
			B(0,\delta)}} \Big\|_{p} &\lesssim \|\chi_{\overline{B(0,\delta)}}\|_{1} \,
	\|(D^{\beta}f)\chi_{\widetilde{R}_{k}\cap\Omega}\|_{p} \\
	&\lesssim \|D^{\beta}f\chi_{\widetilde{R}_{k}\cap\Omega}\|_{p}
	\end{align*}
	for every $|\beta |\leq m-1$. Therefore, 
	\begin{align*}
	\sum_{\substack{ k\in\mathbb{Z}  \\ \delta\le 2^{k-2}}} 2^{k\alpha_{1}r}
	\|J_{1,k}\chi_{R_{k}\cap\Omega}\|_{p}^{r} &\lesssim\sum_{|\beta|\le m-1} \sum_{\substack{ k\in%
			\mathbb{Z}  \\ \delta\le 2^{k-2}}} 2^{k\alpha_{1}r} \|(D^{\beta}f)\chi_{%
		\widetilde{R}_{k}\cap\Omega}\|_{p}^{r} \\
	&\lesssim \sum_{|\beta|\le m-1}\sum_{\substack{ k\in\mathbb{Z}  \\ \delta\le 2^{k-2}}}
	2^{k\alpha_{2}r} \|(D^{\beta}f)\chi_{\widetilde{R}_{k}\cap\Omega}\|_{p}^{r},
	\end{align*}
	since $\alpha_{2}\ge \alpha_{1}$, where we used \eqref{emb5}.
	
	Next we
	estimate $J_{2,k}$. Observe that 
	\begin{align*}
	J_{2,k}(x) &\lesssim \sum_{|\beta|\le m} \int_{C_{x,\delta}} \frac{%
		|D^{\beta}f(y)|}{|x-y|^{n-m}} \chi_{\widetilde{R}_{k}}(y)\,dy \\
	&\lesssim 2^{k(\alpha_{2}-\alpha_{1})} \sum_{|\beta|\le m} \int_{\overline{%
			B(x,\delta)}} \frac{|D^{\beta}f(y)|} {|x-y|^{n-m+\alpha_{2}-\alpha_{1}}}
	\chi_{\widetilde{R}_{k}}(y)\,dy \\
	&= c\,2^{k(\alpha_{2}-\alpha_{1})} \sum_{|\beta|\le m} \Big( |D^{\beta}f|
	\chi_{\widetilde{R}_{k}\cap\Omega} \Big) * \Big( \chi_{\overline{B(0,\delta)}%
	} \omega_{m,\alpha_{1},\alpha_{2}} \Big)(x),
	\end{align*}
	because $\alpha_{2}\ge \alpha_{1}$ and $|x-y|\le 2^{k+2}$ for all $y\in%
	\widetilde{R}_{k}$ and $x\in R_{k}$, where 
	\begin{equation*}
	\omega_{m,\alpha_{1},\alpha_{2}}(x) = \frac{1}{|x|^{n-m+\alpha_{2}-%
			\alpha_{1}}}, \qquad x\neq 0,
	\end{equation*}
	and the positive constant $c$ is independent of $k$. Applying Young's
	inequality once again, we obtain  for every $|\beta |\leq m$
	\begin{align*}
	\Big\| |D^{\beta}f| \chi_{\widetilde{R}_{k}\cap\Omega} * \chi_{\overline{%
			B(0,\delta)}} \omega_{m,\alpha_{1},\alpha_{2}} \Big\|_{p} &\lesssim \| \chi_{%
		\overline{B(0,\delta)}} \omega_{m,\alpha_{1},\alpha_{2}} \|_{1} \,
	\|(D^{\beta}f)\chi_{\widetilde{R}_{k}\cap\Omega}\|_{p} \\
	&\lesssim \|(D^{\beta}f)\chi_{\widetilde{R}_{k}\cap\Omega}\|_{p},
	\end{align*}
	since $m>\alpha_{2}-\alpha_{1}$. Consequently, 
	\begin{align*}
	\Lambda_{1} &\lesssim \sum_{l=1}^{2} \sum_{\substack{ k\in\mathbb{Z}  \\ %
			\delta\le 2^{k-2}}} 2^{k\alpha_{1}r}
	\|J_{l,k}\chi_{R_{k}\cap\Omega}\|_{p}^{r} \\
	&\lesssim\sum_{|\beta|\le m} \sum_{k\in\mathbb{Z}} 2^{k\alpha_{2}r} \|(D^{\beta}f)\chi_{%
		\widetilde{R}_{k}\cap\Omega}\|_{p}^{r} \\
	&\lesssim \|f\|_{\dot{K}_{p,m}^{\alpha_{2},r}(\Omega)}^{r}.
	\end{align*}
	
	\medskip
	
	\noindent \textit{Case 2.} $m=\alpha_{2}-\alpha_{1}$. We only need to
	consider the case $\alpha_{2}>\alpha_{1}$ and estimate $J_{2,k}$. We have 
	\begin{align*}
	J_{2,k}(x) &\lesssim \sum_{|\beta|\le m} \int_{C_{x,\delta}} \frac{%
		|D^{\beta}f(y)|}{|x-y|^{n-m}} \chi_{\widetilde{R}_{k}}(y)\,dy \\
	&\le \sum_{|\beta|\le m} \int_{\overline{B(x,\delta)}} \frac{|D^{\beta}f(y)|%
	} {|x-y|^{n-\alpha_{2}+\alpha_{1}}} \chi_{\widetilde{R}_{k}}(y)\,dy \\
	&\le \sum_{|\beta|\le m} \Big( |D^{\beta}f| \chi_{\widetilde{R}%
		_{k}\cap\Omega} \Big) * \Big( \chi_{\overline{B(0,2^{k-2})}}
	\omega_{\alpha_{2}-\alpha_{1},0,0} \Big)(x).
	\end{align*}
	
	By Young's inequality, 
	\begin{align*}
	\Big\| |D^{\beta}f| \chi_{\widetilde{R}_{k}\cap\Omega} * \chi_{\overline{%
			B(0,2^{k-2})}} \omega_{\alpha_{2}-\alpha_{1},0,0} \Big\|_{p} &\lesssim \|
	\chi_{\overline{B(0,2^{k-2})}} \omega_{\alpha_{2}-\alpha_{1},0,0} \|_{1} \,
	\|D^{\beta}f\chi_{\widetilde{R}_{k}\cap\Omega}\|_{p} \\
	&\lesssim 2^{k(\alpha_{2}-\alpha_{1})} \|D^{\beta}f\chi_{\widetilde{R}%
		_{k}\cap\Omega}\|_{p}.
	\end{align*}
	for every $|\beta |\leq m$. Therefore, as in Case 1, 
	\begin{equation*}
	\Lambda_{1} \lesssim \|f\|_{\dot{K}_{p,m}^{\alpha_{2},r}(\Omega)}^{r}.
	\end{equation*}
	
	\medskip
	
	\noindent \textbf{Estimate of $\Lambda_{2}$.} Again we distinguish two cases.
	
	\medskip
	
	\noindent \textit{Case 1.} $m>\alpha_{2}-\alpha_{1}$. Let $x\in
	R_{k}\cap\Omega$, $k\in\mathbb{Z}$, and suppose that $\delta>2^{k-2}$. Let $%
	J_{1,k}$ and $J_{2,k}$ be defined as in Theorem \ref{embeddingsfirst}. Since 
	$\delta>2^{k-2}$, we have 
	\begin{align*}
	J_{1,k}(x) &\lesssim \sum_{|\beta|\le m-1} \int_{B(0,5\delta)}
	|D^{\beta}f(y)| \chi_{\Omega}(y)\,dy \\
	&\le \sum_{|\beta|\le m-1} \sum_{i=-\infty}^{j} \int_{\Omega}
	|D^{\beta}f(y)| \chi_{i}(y)\,dy .
	\end{align*}
	
	By H\"{o}lder's inequality, 
	\begin{align*}
	J_{1,k}(x) &\lesssim \sum_{|\beta|\le m-1} \sum_{i=-\infty}^{j} 2^{i(n-\frac{%
			n}{p})} \|(D^{\beta}f)\chi_{R_{i}\cap\Omega}\|_{p} \\
	&\lesssim \sum_{|\beta|\le m-1} \sum_{i=-\infty}^{j} 2^{i(n-\frac{n}{p}%
		-\alpha_{2})} \sup_{i\in\mathbb{Z}} \Big( 2^{i\alpha_{2}}
	\|(D^{\beta}f)\chi_{R_{i}\cap\Omega}\|_{p} \Big) \\
	&\lesssim \sum_{|\beta|\le m-1} \|D^{\beta}f\|_{\dot{K}_{p}^{\alpha_{2},%
			\infty}(\Omega)} \\
	&\lesssim \|f\|_{\dot{K}_{p,m}^{\alpha_{2},\infty}(\Omega)},
	\end{align*}
	where the last inequality follows from the assumption $\alpha_{2}<n-\frac{n}{%
		p}$.
	
	Hence 
	\begin{align*}
	\sum_{\substack{ k\in\mathbb{Z}  \\ \delta>2^{k-2}}} 2^{k\alpha_{1}r}
	\|J_{1,k}\chi_{R_{k}\cap\Omega}\|_{p}^{r} &\lesssim \sum_{\substack{ k\in%
			\mathbb{Z}  \\ \delta>2^{k-2}}} 2^{k(\alpha_{1}+\frac{n}{p})r} \|f\|_{\dot{K}%
		_{p,m}^{\alpha_{2},r}(\Omega)}^{r} \\
	&\lesssim \|f\|_{\dot{K}_{p,m}^{\alpha_{2},r}(\Omega)}^{r},
	\end{align*}
	because $\alpha_{1}>-\frac{n}{p}$.
	
	Next, we decompose $J_{2,k} =J_{3,k}+J_{4,k}$, where 
	\begin{equation*}
	J_{3,k}(x)=\sum_{|\beta|=m} \int_{C_{x,2^{k-2}}} \frac{|D^{\beta}f(y)|}{%
		|x-y|^{n-m}}\,dy ,\qquad J_{4,k}(x)= \sum_{|\beta|=m} \int_{B_{x,\delta,k-4}} \frac{%
		|D^{\beta}f(y)|}{|x-y|^{n-m}}\,dy .
	\end{equation*}
	
	Since $C_{x,2^{k-2}}\subset \widetilde{R}_{k}\cap\Omega$, we obtain 
	\begin{align*}
	J_{3,k}(x) &\lesssim \sum_{|\beta|=m} 2^{k(\alpha_{2}-\alpha_{1})}
	\int_{C_{x,2^{k-2}}} \frac{ |D^{\beta}f(y)| \chi_{\widetilde{R}%
			_{k}\cap\Omega}(y) } {|x-y|^{n-m-\alpha_{1}+\alpha_{2}}} \,dy \\
	&= c\,2^{k(\alpha_{2}-\alpha_{1})} \sum_{|\beta|=m} \Big( |D^{\beta}f| \chi_{%
		\widetilde{R}_{k}\cap\Omega} \Big) * \Big( \chi_{\overline{B(0,\delta)}}
	\omega_{m,\alpha_{1},\alpha_{2}} \Big)(x),
	\end{align*}
	because $\alpha_{2}\ge \alpha_{1}$ and $|x-y|\le 2^{k+2}$ for all $y\in 
	\widetilde{R}_{k}$ and $x\in R_{k}$. Young's inequality gives 
	\begin{align*}
	\Big\| |D^{\beta}f| \chi_{\widetilde{R}_{k}\cap\Omega} * \chi_{\overline{%
			B(0,\delta)}} \omega_{m,\alpha_{1},\alpha_{2}} \Big\|_{p} &\lesssim \| \chi_{%
		\overline{B(0,\delta)}} \omega_{m,\alpha_{1},\alpha_{2}} \|_{1} \,
	\|(D^{\beta}f)\chi_{\widetilde{R}_{k}\cap\Omega}\|_{p} \\
	&\lesssim \|(D^{\beta}f)\chi_{\widetilde{R}_{k}\cap\Omega}\|_{p},\qquad |\beta |= m,
	\end{align*}
	since $m>\alpha_{2}-\alpha_{1}$. Consequently, 
	\begin{equation*}
	\sum_{\substack{ k\in\mathbb{Z}  \\ \delta>2^{k-2}}} 2^{k\alpha_{1}r}
	\|J_{3,k}\chi_{R_{k}\cap\Omega}\|_{p}^{r} \lesssim \|f\|_{\dot{K}%
		_{p,m}^{\alpha_{2},r}(\Omega)}^{r},  \label{lambda22}
	\end{equation*}
	where we used \eqref{emb5}. We further decompose $J_{4,k}$ as 
	$J_{4,k} = J_{4,k}^{1}+J_{4,k}^{2},$
	where\begin{align*}
	J_{4,k}^{1} (x)=& \sum_{|\beta |=m} \int_{B_{x,2^{k+2},k-4}} \frac{|D^{\beta }f(y)|}{%
		|x-y|^{n-m}}\,dy,\qquad J_{4,k}^{2}(x)= \sum_{|\beta |=m} \digamma _{k,\beta }(x).
	\end{align*}
	Using H\"{o}lder's inequality, 
	\begin{align*}
	J_{4,k}^{1}(x) &\le 2^{k(m-n)} \sum_{|\beta|=m} \int_{|y|\le 2^{k+4}}
	|D^{\beta}f(y)| \chi_{\Omega}(y)\,dy \\
	&\lesssim 2^{k(m-n)} \sum_{|\beta|=m} \sum_{i=-\infty}^{k+4} 2^{i(n-\frac{n}{%
			p}-\alpha_{2})} 2^{i\alpha_{2}} \|(D^{\beta}f)\chi_{R_{i}\cap\Omega}\|_{p}.
	\end{align*}
	
	Since $\alpha_{2}<n-\frac{n}{p}$, 
	\begin{align*}
	2^{k\alpha_{1}} \|J_{4,k}^{1}\chi_{R_{k}\cap\Omega}\|_{p} &\lesssim
	2^{k(m-n+\alpha_{1}+\frac{n}{p})} \sum_{|\beta|=m} \sum_{i=-\infty}^{k+4}
	2^{i(n-\frac{n}{p}-\alpha_{2})} 2^{i\alpha_{2}}
	\|(D^{\beta}f)\chi_{R_{i}\cap\Omega}\|_{p} \\
	&\lesssim 2^{k(m+\alpha_{1}-\alpha_{2})}  \sum_{|\beta|=m}\sup_{i\in\mathbb{Z}} \Big( %
	2^{i\alpha_{2}} \|(D^{\beta}f)\chi_{R_{i}\cap\Omega}\|_{p} \Big) \\
	&\lesssim 2^{k(m+\alpha_{1}-\alpha_{2})} \|f\|_{\dot{K}_{p,m}^{%
			\alpha_{2},r}(\Omega)}.
	\end{align*}
	
	Therefore, 
	\begin{equation*}
	\sum_{\substack{ k\in\mathbb{Z}  \\ \delta>2^{k-2}}} 2^{k\alpha_{1}r}
	\|J_{4,k}^{1}\chi_{R_{k}\cap\Omega}\|_{p}^{r} \lesssim \|f\|_{\dot{K}%
		_{p,m}^{\alpha_{2},r}(\Omega)}^{r},
	\end{equation*}
	where we used the fact that $m>\alpha_{2}-\alpha_{1}$.
	
	Finally, by Lemmas \ref{key1} and \ref{key1 copy(4)}, 
	\begin{equation*}
	\sum_{\substack{ k\in\mathbb{Z}  \\ \delta>2^{k-2}}} 2^{k\alpha_{1}r}
	\|J_{4,k}^{2}\chi_{R_{k}\cap\Omega}\|_{p}^{r} \lesssim \|f\|_{\dot{K}%
		_{p,m}^{\alpha_{2},r}(\Omega)}^{r}.
	\end{equation*}
	
	\medskip
	
	\noindent \textit{Case 2.} $m=\alpha_{2}-\alpha_{1}$. It remains only to
	estimate $J_{2,k}$. We first observe that 
	\begin{align*}
	J_{3,k}(x) &\lesssim \sum_{|\beta|=m} \int_{C_{x,2^{k-2}}} \frac{
		|D^{\beta}f(y)| \chi_{\widetilde{R}_{k}\cap\Omega}(y) } {|x-y|^{n-%
			\alpha_{2}+\alpha_{1}}} \,dy \\
	&= c \sum_{|\beta|=m} \Big( |D^{\beta}f| \chi_{\widetilde{R}_{k}\cap\Omega} %
	\Big) * \Big( \chi_{\overline{B(0,2^{k-2})}}
	\omega_{\alpha_{2}-\alpha_{1},0,0} \Big)(x).
	\end{align*}
	
	We only need to consider the case $\alpha_{2}>\alpha_{1}$, Young's
	inequality yields 
	\begin{align*}
	\Big\| |D^{\beta}f| \chi_{\widetilde{R}_{k}\cap\Omega} * \chi_{\overline{%
			B(0,2^{k-2})}} \omega_{\alpha_{2}-\alpha_{1},0,0} \Big\|_{p} &\lesssim \|
	\chi_{\overline{B(0,2^{k-2})}} \omega_{\alpha_{2}-\alpha_{1},0,0} \|_{1} \,
	\|(D^{\beta}f)\chi_{\widetilde{R}_{k}\cap\Omega}\|_{p} \\
	&\lesssim 2^{k(\alpha_{2}-\alpha_{1})} \|(D^{\beta}f)\chi_{\widetilde{R}%
		_{k}\cap\Omega}\|_{p},\qquad |\beta|=m
	\end{align*}
	
	Hence, 
	\begin{equation*}
	\sum_{\substack{ k\in\mathbb{Z}  \\ \delta>2^{k-2}}} 2^{k\alpha_{1}r}
	\|J_{3,k}\chi_{R_{k}\cap\Omega}\|_{p}^{r} \lesssim \|f\|_{\dot{K}%
		_{p,m}^{\alpha_{2},r}(\Omega)}^{r},
	\end{equation*}
	where we used \eqref{emb5}. Next, 
	\begin{align*}
	J_{4,k}^{1}(x) &\le 2^{k(\alpha_{2}-\alpha_{1}-n)} \sum_{|\beta|=m}
	\int_{|y|\le 2^{k+4}} |D^{\beta}f(y)| \chi_{\Omega}(y)\,dy \\
	&\lesssim 2^{k(\alpha_{2}-\alpha_{1}-n)} \sum_{|\beta|=m}
	\sum_{i=-\infty}^{k+4} 2^{i(n-\frac{n}{p}-\alpha_{2})} 2^{i\alpha_{2}}
	\|(D^{\beta}f)\chi_{R_{i}\cap\Omega}\|_{p},
	\end{align*}
	by H\"{o}lder's inequality. Taking the $L^{p}(R_{k}\cap\Omega)$-norm on both
	sides yields 
	\begin{align*}
	2^{k\alpha_{1}} \|J_{4,k}^{1}\chi_{R_{k}\cap\Omega}\|_{p} &\lesssim
	2^{k(\alpha_{2}-n+\frac{n}{p})} \sum_{|\beta|=m} \sum_{i=-\infty}^{k+4}
	2^{i(n-\frac{n}{p}-\alpha_{2})} 2^{i\alpha_{2}}
	\|(D^{\beta}f)\chi_{R_{i}\cap\Omega}\|_{p} \\
	&= c \sum_{|\beta|=m} \sum_{i=-\infty}^{k+4} 2^{(i-k)(n-\frac{n}{p}%
		-\alpha_{2})} 2^{i\alpha_{2}} \|(D^{\beta}f)\chi_{R_{i}\cap\Omega}\|_{p}.
	\end{align*}
	
	Therefore, 
	\begin{equation*}
	\sum_{\substack{ k\in\mathbb{Z}  \\ \delta>2^{k-2}}} 2^{k\alpha_{1}r}
	\|J_{4,k}^{1}\chi_{R_{k}\cap\Omega}\|_{p}^{r} \lesssim \|f\|_{\dot{K}%
		_{p,m}^{\alpha_{2},r}(\Omega)}^{r},
	\end{equation*}
	by \eqref{emb5} and Lemma \ref{lq-inequality}. Finally, Lemmas \ref{key1}
	and \ref{key1 copy(4)} imply that 
	\begin{equation*}
	\sum_{\substack{ k\in\mathbb{Z}  \\ \delta>2^{k-2}}} 2^{k\alpha_{1}r}
	\|J_{4,k}^{2}\chi_{R_{k}\cap\Omega}\|_{p}^{r} \lesssim \|f\|_{\dot{K}%
		_{p,m}^{\alpha_{2},r}(\Omega)}^{r}.
	\end{equation*}
	
	This completes the proof.
\end{proof}

Next, we give an example showing that the assumption $m\ge
\alpha_{2}-\alpha_{1}$ in Theorem \ref{embedingsq=p} is essential.

\begin{example}
	\label{example1} Let $\Omega \subset \mathbb{R}^{n}$ be a domain satisfying
	the cone condition and such that $0\in \Omega$. Let $m\in \mathbb{N}$, $1\le
	r,v<\infty$, and 
	\begin{equation*}
	-\frac{n}{p}<\alpha_{1}\le \alpha_{2}<n-\frac{n}{p}.
	\end{equation*}
	Then $\dot{K}_{p,m}^{\alpha_{2},r}(\Omega)$ is not embedded into $\dot{K}%
	_{p}^{\alpha_{1},v}(\Omega)$ if 
	$m<\alpha_{2}-\alpha_{1}$.
	
	Indeed, let $u>0$ be such that 
	$\overline{B(0,2u)}\subset \Omega.
	$
	Let $\kappa<0$ be chosen later and define 
	\begin{equation*}
	f(x)=|x|^{\kappa}.
	\end{equation*}
	By induction on $|\beta|$, we have 
	\begin{equation*}
	D^{\beta}f(x)=P_{\beta}(x)|x|^{\kappa-2|\beta|},
	\end{equation*}
	where $P_{\beta}(x)$ is a homogeneous polynomial of degree $|\beta|$ in the
	components of $x$; see \cite[Example 4.41]{AdamsFournier03}. Hence 
	\begin{equation}
	|D^{\beta}f(x)|\le c|x|^{\kappa-|\beta|},  \label{prop1}
	\end{equation}
	where the positive constant $c$ is independent of $x$. Assume that $%
	|\beta|\le m$. Then 
	\begin{align*}
	\|D^{\beta}f\|_{\dot{K}_{p}^{\alpha_{2},r}(B(0,u))}^{r} &= \sum_{k\in\mathbb{%
			Z}} 2^{k\alpha_{2}r} \|(D^{\beta}f)\,\chi_{R_{k}\cap B(0,u)}\|_{p}^{r} \\
	&= \sum_{\substack{ k\in\mathbb{Z}  \\ 2^{k-1}\le u}} 2^{k\alpha_{2}r}
	\|(D^{\beta}f)\,\chi_{R_{k}\cap B(0,u)}\|_{p}^{r}.
	\end{align*}
	
	By \eqref{prop1}, 
	\begin{equation*}
	\int_{R_{k}\cap B(0,u)} |D^{\beta}f(x)|^{p}\,dx \le \int_{R_{k}}
	|x|^{(\kappa-|\beta|)p}\,dx \lesssim 2^{k(\kappa-|\beta|)p+n},
	\end{equation*}
	where the implicit constant is independent of $k$. Consequently, 
	\begin{equation*}
	\sum_{\substack{ k\in\mathbb{Z}  \\ 2^{k-1}\le u}} 2^{k(\alpha_{2}+\kappa-|%
		\beta|+\frac{n}{p})r} <\infty
	\end{equation*}
	provided that 
	\begin{equation*}
	\alpha_{2}+\kappa-|\beta|+\frac{n}{p}>0.
	\end{equation*}
	
	On the other hand, 
	\begin{equation*}
	\|f\|_{\dot{K}_{p}^{\alpha_{1},v}(B(0,u))}^{v} \ge \sum_{\substack{ k\in%
			\mathbb{Z}  \\ 2^{k}\le u}} 2^{k\alpha_{1}v} \|f\chi_{R_{k}\cap
		B(0,u)}\|_{p}^{v}.
	\end{equation*}
	Since $R_{k}\subset B(0,u)$ whenever $2^{k}\le u$, there exists a positive
	constant $c$ such that 
	\begin{equation*}
	\int_{R_{k}\cap B(0,u)} |f(x)|^{p}\,dx = \int_{R_{k}} |f(x)|^{p}\,dx \ge
	c\,2^{k\kappa p+n}.
	\end{equation*}
	Thus 
	$f\notin \dot{K}_{p}^{\alpha_{1},v}(B(0,u))$
	if 
	$\kappa<-\frac{n}{p}-\alpha_{1}.$
	
	Choose $\kappa$ so that 
	\begin{equation*}
	m-\frac{n}{p}-\alpha_{2} < \kappa < -\frac{n}{p}-\alpha_{1}.
	\end{equation*}
	Then 
	\begin{equation*}
	f\in \dot{K}_{p,m}^{\alpha_{2},r}(B(0,u)) \quad \text{but} \quad f\notin 
	\dot{K}_{p}^{\alpha_{1},v}(B(0,u)).
	\end{equation*}
	
	Let $\psi\in \mathcal{D}(\Omega)$ satisfy 
	$\text{\textrm{supp }%
	}\psi \subset \overline{B(0,2u)}$
	and 
	$\psi(x)=1 $ for $ x\in B(0,u).$
	Define 
	\begin{equation*}
	g(x)=\psi(x)f(x).
	\end{equation*}
	We claim that 
	\begin{equation*}
	f\in \dot{K}_{p,m}^{\alpha_{2},r}(B(0,u)) \quad \Longleftrightarrow \quad
	g\in \dot{K}_{p,m}^{\alpha_{2},r}(\Omega).
	\end{equation*}
	This proves the necessity of the condition 
	$m\ge
	\alpha_{2}-\alpha_{1}$.
	
	It remains to prove the claim. We only prove the direct implication. It is
	easy to see that 
	\begin{equation*}
	\int_{\Omega} |D^{\beta}g(x)|^{p}\chi_{k}(x)\,dx = \int_{B(0,u)}
	|D^{\beta}f(x)|^{p}\chi_{k}(x)\,dx + \int_{\widetilde{R}(u)}
	|D^{\beta}g(x)|^{p}\chi_{k}(x)\,dx,
	\end{equation*}
	where $|\beta|\le m$, 
	\begin{equation*}
	\widetilde{R}(u)=\{x\in\mathbb{R}^{n}:u\le |x|\le 2u\},
	\end{equation*}
	and $k\in\mathbb{Z}$. Observe that 
	\begin{equation}
	\|D^{\beta}g\|_{\dot{K}_{p}^{\alpha_{2},r}(\widetilde{R}(u))}^{r} = \sum 
	_{\substack{ k\in\mathbb{Z}  \\ u\approx 2^{k}}} 2^{k\alpha_{2}r}
	\|D^{\beta}g\,\chi_{\widetilde{R}(u)\cap R_{k}}\|_{p}^{r}.  \label{estg2}
	\end{equation}
	
	By Leibniz's formula, \eqref{prop1}, and the fact that $\psi\in \mathcal{D}%
	(\Omega)$, we obtain 
	\begin{equation}
	|D^{\beta}g(x)| \le \sum_{\gamma\le \beta} C_{\beta}^{\gamma}
	|D^{\beta-\gamma}\psi(x)| |D^{\gamma}f(x)| \lesssim |x|^{\kappa-|\beta|}
	\label{estg}
	\end{equation}
	for all $x\in \widetilde{R}(u)\cap R_{k}$, where the implicit constant is
	independent of $x$.
	
	Combining \eqref{estg2} and \eqref{estg}, we obtain 
	\begin{equation*}
	\|D^{\beta}g\|_{\dot{K}_{p}^{\alpha_{2},r}(\widetilde{R}(u))}^{r} \lesssim
	\sum_{\substack{ k\in\mathbb{Z}  \\ u\approx 2^{k}}} 2^{k(\alpha_{2}+%
		\kappa-|\beta|+\frac{n}{p})r} \lesssim 1.
	\end{equation*}
	This proves the claim. Similarly, one can show that 
	\begin{equation*}
	K_{p,m}^{\alpha_{2},r}(\Omega) \not\hookrightarrow
	K_{p}^{\alpha_{1},v}(\Omega)
	\end{equation*}
	if $m<\alpha_{2}-\alpha_{1}$.
\end{example}

Collecting the results of Theorems \ref{embedingsq=p} and \ref%
{embeddingsfirst}, together with the interpolation inequalities %
\eqref{Interpolation} and \eqref{Interpolation1}, we obtain the following
statement.

\begin{theorem}
	\label{embeddingsfirst copy(1)} Let $\Omega \subset \mathbb{R}^{n}$ be a
	domain satisfying the cone condition, and let $m\in \mathbb{N}_{0}$. Assume
	that $1\le r<\infty, 1<p<q<p^{\ast}<\infty, m>\alpha_{2}-\alpha_{1},$
	and 
	\begin{equation*}
	\frac{n}{p^{\ast}} = \frac{n}{p}-m+\alpha_{2}-\alpha_{1} >0,
	\end{equation*}
	with 
	\begin{equation*}
	-\frac{n}{q}<\alpha_{1}\le \alpha_{2}<n-\frac{n}{p}.
	\end{equation*}
	Then 
	\begin{equation*}
	\dot{K}_{p,m}^{\alpha_{2},r}(\Omega) \hookrightarrow \dot{K}%
	_{q}^{\alpha_{1},r}(\Omega)
	\end{equation*}
	and 
	\begin{equation}
	K_{p,m}^{\alpha_{2},r}(\Omega) \hookrightarrow K_{q}^{\alpha_{1},r}(\Omega).
	\label{emb9}
	\end{equation}
\end{theorem}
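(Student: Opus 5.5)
The plan is to interpolate, via Lemma \ref{interpolation2}, between the two endpoint embeddings already available. At the endpoint $q=p^{\ast}$, Theorem \ref{embeddingsfirst} applies. At the endpoint $q=p$, Theorem \ref{embedingsq=p} applies. Since $p<q<p^{\ast}$, there is a unique $\theta\in(0,1)$ with
\begin{equation*}
\frac{1}{q}=\frac{1-\theta}{p}+\frac{\theta}{p^{\ast}} .
\end{equation*}
We take the same Herz exponent $r$ at both ends ($q_{0}=q_{1}=r$) and the same smoothness weight $\alpha_{1}$ at both ends ($\alpha_{0}=\alpha_{1}=\alpha_{1}$). Then \eqref{Interpolation} gives
\begin{equation*}
\|f\|_{\dot{K}_{q}^{\alpha_{1},r}(\Omega)}\le \|f\|_{\dot{K}_{p}^{\alpha_{1},r}(\Omega)}^{1-\theta}\,\|f\|_{\dot{K}_{p^{\ast}}^{\alpha_{1},r}(\Omega)}^{\theta}.
\end{equation*}
The nonhomogeneous case \eqref{emb9} follows in the same way from \eqref{Interpolation1}.

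Next we bound each factor by $\|f\|_{\dot{K}_{p,m}^{\alpha_{2},r}(\Omega)}$.

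For the $p^{\ast}$ factor we apply Theorem \ref{embeddingsfirst} with its $q$ replaced by $p^{\ast}$. Its hypotheses hold as follows:
\begin{itemize}
\item $n/p^{\ast}=n/p-m-\alpha_{1}+\alpha_{2}>0$ is assumed, together with $1<p<\infty$, $1\le r<\infty$ and $m>\alpha_{2}-\alpha_{1}$.
\item $\alpha_{1}\le\alpha_{2}<n-n/p$ is assumed.
\item $\alpha_{1}>-n/p^{\ast}$ follows from $\alpha_{1}>-n/q$, since $q<p^{\ast}$ gives $-n/q<-n/p^{\ast}$. Thus the hypothesis $\alpha_{1}>-n/q$ is the stronger of the two conditions needed.
\item $m\ge1$ is needed, since Theorem \ref{embeddingsfirst} requires $m\in\mathbb{N}$. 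If $m=0$, then $m>\alpha_{2}-\alpha_{1}\ge0$ is impossible, so this case cannot occur.
\end{itemize}

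For the $p$ factor we apply Theorem \ref{embedingsq=p}. It requires $-n/p<\alpha_{1}\le\alpha_{2}<n-n/p$ and $m\ge\alpha_{2}-\alpha_{1}$. The last condition follows from $m>\alpha_{2}-\alpha_{1}$. The lower bound $\alpha_{1}>-n/p$ follows from $\alpha_{1}>-n/q$, because $q>p$ gives $-n/q>-n/p$.

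Multiplying the two bounds gives $\|f\|_{\dot K_{q}^{\alpha_{1},r}}\lesssim \|f\|_{\dot K_{p,m}^{\alpha_{2},r}}^{1-\theta+\theta}$, which is the claimed embedding.

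The only delicate point is that Lemma \ref{interpolation2} is stated for $f$ in the intersection of the two endpoint spaces. Here that is automatic, because both endpoint embeddings hold on all of $\dot{K}_{p,m}^{\alpha_{2},r}(\Omega)$, so no density argument via Theorem \ref{density} is required. The real work is therefore just the parameter bookkeeping above: checking that the single hypothesis $\alpha_{1}>-n/q$ implies the lower bounds on $\alpha_{1}$ required by both endpoint theorems.
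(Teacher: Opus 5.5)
\textbf{The $p^{\ast}$ endpoint check is backwards.} You take the same route as the paper: interpolate with \eqref{Interpolation} between Theorem~\ref{embedingsq=p} at exponent $p$ and Theorem~\ref{embeddingsfirst} at exponent $p^{\ast}$. Theorem~\ref{embeddingsfirst}, with its $q$ replaced by $p^{\ast}$, requires $\alpha_1>-n/p^{\ast}$. From $q<p^{\ast}$ you get $-n/q<-n/p^{\ast}$, so $\alpha_1>-n/q$ is the \emph{weaker} condition and does not give $\alpha_1>-n/p^{\ast}$. The chain $-n/p<-n/q<-n/p^{\ast}$ shows that your hypothesis covers the $p$-endpoint but not the $p^{\ast}$-endpoint. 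By the definition of $p^{\ast}$, the missing condition $\alpha_1>-n/p^{\ast}$ is equivalent to $\alpha_2>m-n/p$, which is not assumed.

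For instance, take $n=3$, $p=2$, $m=2$, $\alpha_2=\frac15$, $\alpha_1=-1$, $q=\frac52$. These satisfy every hypothesis of the theorem, with $n/p^{\ast}=\frac{7}{10}$, $p^{\ast}=\frac{30}{7}$ and $-n/q=-\frac65$, yet $\alpha_1<-\frac{7}{10}=-n/p^{\ast}$. The endpoint embedding genuinely fails there. If $0\in\Omega$ and $\psi\in\mathcal{D}(\Omega)$ equals $1$ near the origin, then $\psi\in\dot{K}_{p,m}^{\alpha_2,r}(\Omega)$ because $\alpha_2+n/p>0$. However, $\|\psi\|_{\dot{K}_{p^{\ast}}^{\alpha_1,r}(\Omega)}=\infty$ because $\alpha_1+n/p^{\ast}<0$. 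So your interpolation bound is vacuous, even though $\psi\in\dot{K}_{q}^{\alpha_1,r}(\Omega)$ since $\alpha_1+n/q>0$.

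\textbf{The paper has the same gap, and how to repair it.} The paper's proof applies Theorem~\ref{embeddingsfirst} at $p^{\ast}$ in the same way without checking this condition. There are two ways to fix the argument.

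First, you can add the hypothesis $\alpha_1>-n/p^{\ast}$, equivalently $m-n/p<\alpha_2$. Your argument is then complete as written, and $\alpha_1>-n/q$ becomes automatic.

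Second, to cover the full stated range, argue directly in the style of Theorem~\ref{embedingsp<q}:
\begin{itemize}
\item For $\delta\le 2^{k-2}$, use Young's inequality from $L^{p}$ to $L^{q}$ with the kernels $\chi_{\overline{B(0,\delta)}}$ and $\chi_{\overline{B(0,\delta)}}|\cdot|^{m-n}$. This is valid because $1/q>1/p-m/n$; combine it with $\alpha_1\le\alpha_2$.
\item For $\delta>2^{k-2}$, split the top-order term into three parts. For the part with $|x-y|\le 2^{k-2}$, use Young's inequality with kernel $\chi_{B(0,2^{k-2})}|\cdot|^{m-n}$. Its $L^{s}$-norm, for $1+\frac1q=\frac1s+\frac1p$, is $\approx 2^{k(m+n/q-n/p)}$.
\item Treat the annular part like $J_{4,k}^{1}$.
\item Treat the far part with Lemmas~\ref{key1} and \ref{key1 copy(4)}. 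These apply with $\nu=\infty$, since $m>n/p-n/q+\alpha_2-\alpha_1$.
\end{itemize}
Every top-order weight that appears is then $2^{k(n/q-n/p^{\ast})}$, with positive exponent, summed over $2^{k}\lesssim\delta$. The lower-order terms use only $\alpha_1>-n/q$ and $\alpha_2<n-n/p$.
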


\begin{proof}
	Let 
	$f\in \dot{K}_{p,m}^{\alpha _{2},r}(\Omega ),$
	and choose $\theta \in (0,1)$ such that 
	\begin{equation*}
	\frac{1}{q}=\frac{\theta }{p}+\frac{1-\theta }{p^{\ast }}.
	\end{equation*}%
	By the interpolation inequality \eqref{Interpolation}, we obtain 
	\begin{equation*}
	\Vert f\Vert _{\dot{K}_{q}^{\alpha _{1},r}(\Omega )}\leq \Vert f\Vert _{\dot{%
			K}_{p}^{\alpha _{1},r}(\Omega )}^{\theta }\,\Vert f\Vert _{\dot{K}_{p^{\ast
		}}^{\alpha _{1},r}(\Omega )}^{1-\theta }.
	\end{equation*}%
	Applying Theorems \ref{embedingsq=p} and \ref{embeddingsfirst}, we deduce
	that 
	\begin{equation*}
	\Vert f\Vert _{\dot{K}_{q}^{\alpha _{1},r}(\Omega )}\lesssim \Vert f\Vert _{%
		\dot{K}_{p,m}^{\alpha _{2},r}(\Omega )}.
	\end{equation*}%
	Hence 
	\begin{equation*}
	\dot{K}_{p,m}^{\alpha _{2},r}(\Omega )\hookrightarrow \dot{K}_{q}^{\alpha
		_{1},r}(\Omega ).
	\end{equation*}
	
	The embedding \eqref{emb9} follows similarly by using the interpolation
	inequality \eqref{Interpolation1}. The proof is complete.
\end{proof}

\begin{remark}
	A natural question is whether the embeddings obtained in Theorems \ref%
	{embeddingsfirst} and \ref{embeddingsfirst copy(1)} remain valid when 
	\begin{equation*}
	q>p^{\ast}.
	\end{equation*}
	Example \ref{example1} shows that this is generally false. Indeed, one can
	choose 
	\begin{equation*}
	m-\frac{n}{p}-\alpha_{2} < \kappa < -\frac{n}{q}-\alpha_{1}
	\end{equation*}
	such that 
	\begin{equation*}
	g\in \dot{K}_{p,m}^{\alpha_{2},r}(\Omega) \quad \text{but} \quad g\notin 
	\dot{K}_{q}^{\alpha_{1},v}(\Omega),
	\end{equation*}
	and similarly, 
	\begin{equation*}
	g\in K_{p,m}^{\alpha_{2},r}(\Omega) \quad \text{but} \quad g\notin
	K_{q}^{\alpha_{1},v}(\Omega).
	\end{equation*}
\end{remark}

We now study the limiting case 
\begin{equation*}
\alpha_{2}=n-\frac{n}{p}
\end{equation*}
in Theorem \ref{embedingsq=p}.

\begin{theorem}
	\label{embedingsq=p copy(1)} Let $\Omega \subset \mathbb{R}^{n}$ be a domain
	satisfying the cone condition, and let $m\in \mathbb{N}$. Assume that 
	$1\le p<\infty, 1\le r<\infty,$
	and 
	\begin{equation*}
	-\frac{n}{p}<\alpha \le n-\frac{n}{p}, \qquad m>n-\frac{n}{p}-\alpha.
	\end{equation*}
	Then 
	\begin{equation}
	\dot{K}_{p,m}^{\,n-\frac{n}{p},1}(\Omega) \hookrightarrow \dot{K}%
	_{p}^{\alpha,r}(\Omega),  \label{limitcase}
	\end{equation}
	and 
	\begin{equation*}
	K_{p,m}^{\,n-\frac{n}{p},1}(\Omega) \hookrightarrow K_{p}^{\alpha,r}(\Omega)
	\end{equation*}
	hold.
\end{theorem}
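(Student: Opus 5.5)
We follow the scheme of the proof of Theorem \ref{embedingsq=p}, now with $\alpha_{2}=n-\frac{n}{p}$, $\alpha_{1}=\alpha$ and $q=1$ on the source side. By Theorem \ref{density}(i), which covers exactly the endpoint case $\alpha_{2}=n-\frac{n}{p}$, $q=1$, it suffices to treat $f\in C^{\infty}(\Omega)\cap \dot{K}_{p,m}^{n-\frac{n}{p},1}(\Omega)$. The nonhomogeneous statement is handled in the same way, using Theorem \ref{density}(ii) and the nonhomogeneous lemmas. Let $\delta$ be the height of the cone. We split
\[
\|f\|_{\dot{K}_{p}^{\alpha,r}(\Omega)}^{r}=\Lambda_{1}+\Lambda_{2},
\]
according to $\delta\le 2^{k-2}$ or $\delta>2^{k-2}$. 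At each $x\in R_{k}\cap\Omega$ we bound $|f(x)|$ by Lemma \ref{Sobolev-rep1} with $\tau=\delta$, which gives the terms $J_{1,k}$ and $J_{2,k}$ as in Theorem \ref{embeddingsfirst}.

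\textbf{Estimate of $\Lambda_{1}$.} When $\delta\le 2^{k-2}$ we have $C_{x,\delta}\subset\widetilde{R}_{k}$. We then apply Young's inequality with the kernels $\chi_{\overline{B(0,\delta)}}$ and $\chi_{\overline{B(0,\delta)}}\,\omega_{m,\alpha,n-n/p}$. The latter is integrable because $m>n-\frac{n}{p}-\alpha$. This gives
\[
2^{k\alpha}\|f\chi_{R_{k}\cap\Omega}\|_{p}\lesssim \sum_{|\beta|\le m}2^{k(n-\frac{n}{p})}\|(D^{\beta}f)\chi_{\widetilde{R}_{k}\cap\Omega}\|_{p}.
\]
Summing the $r$-th powers and using $\ell^{1}\hookrightarrow\ell^{r}$, i.e. \eqref{herz-emb}, bounds $\Lambda_{1}$ by $\|f\|_{\dot{K}_{p,m}^{n-\frac{n}{p},1}(\Omega)}^{r}$.

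\textbf{Estimate of $\Lambda_{2}$.} For $J_{1,k}$ we estimate over $B(0,5\delta)$ and apply Hölder's inequality on each ring:
\[
J_{1,k}(x)\lesssim\sum_{|\beta|\le m-1}\sum_{i\le j}2^{i(n-\frac{n}{p})}\|(D^{\beta}f)\chi_{R_{i}\cap\Omega}\|_{p}\le\|f\|_{\dot{K}_{p,m}^{n-\frac{n}{p},1}(\Omega)}.
\]
In the proof of Theorem \ref{embedingsq=p} the geometric factor required $\alpha_{2}<n-\frac{n}{p}$; here the Herz index $q=1$ absorbs the sum instead. The remaining factor $\sum_{\delta>2^{k-2}}2^{k(\alpha+\frac{n}{p})r}$ converges since $\alpha>-\frac{n}{p}$. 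We split $J_{2,k}=J_{3,k}+J_{4,k}^{1}+J_{4,k}^{2}$ exactly as before. The local piece $J_{3,k}$ is handled by Young's inequality with the same integrable kernel. For $J_{4,k}^{1}$, Hölder's inequality gives
\[
2^{k\alpha}\|J^{1}_{4,k}\chi_{R_{k}\cap\Omega}\|_{p}\lesssim 2^{k(m-n+\alpha+\frac{n}{p})}\sum_{i\le k+4}2^{i(n-\frac{n}{p})}\|(D^{\beta}f)\chi_{R_{i}\cap\Omega}\|_{p}\le 2^{k(m+\alpha-n+\frac{n}{p})}\|f\|_{\dot{K}_{p,m}^{n-\frac{n}{p},1}}.
\]
The exponent $m+\alpha-n+\frac{n}{p}$ is positive, so the sum over $2^{k}<4\delta$ converges.

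\textbf{The far piece $J_{4,k}^{2}=\sum_{|\beta|=m}\digamma_{k,\beta}$.} We expect this to be the main obstacle. If $m\le n$ we apply Lemma \ref{key1} with $q=p$, $\alpha_{1}=\alpha$, $\alpha_{2}=n-\frac{n}{p}$. The hypothesis $\frac{n}{p}-\frac{n}{p}+\alpha_{2}-\alpha_{1}\le m$ holds strictly, so the lemma yields a bound by $\|f\|_{\dot{K}_{p,m}^{n-\frac{n}{p},\infty}}\le\|f\|_{\dot{K}_{p,m}^{n-\frac{n}{p},1}}$. We must check that its proof survives $\mu=m-n\le 0$. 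Case 1 ($\mu<0$) and Case 3 ($\mu=0$) of that proof do, using $\alpha>-\frac{n}{p}$. If $m>n$ we invoke Lemma \ref{key1 copy(4)} with $\alpha_{2}=n-\frac{n}{p}$; this is its Case 2, and it produces a logarithmic factor $(j-k)$ that is summable against $2^{kr(\alpha+\frac{n}{p})}$. In the borderline situations we would redo the ring sum directly with the $\ell^{1}$ norm, which removes the logarithm. Collecting the bounds for $J_{1,k}$, $J_{3,k}$, $J_{4,k}^{1}$ and $J_{4,k}^{2}$ proves \eqref{limitcase}.
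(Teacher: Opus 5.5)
Your proposal is correct and follows the paper's proof: the paper also reuses the scheme of Theorem~\ref{embedingsq=p} and redoes only the two terms where $\alpha_{2}<n-\frac{n}{p}$ was used, namely $J_{1,k}$ and $J_{4,k}^{1}$, bounding the ring sums $\sum_{i}2^{i(n-\frac{n}{p})}\|(D^{\beta}f)\chi_{R_{i}\cap\Omega}\|_{p}$ directly by the $\dot{K}_{p,m}^{n-\frac{n}{p},1}$-norm and then invoking $\alpha>-\frac{n}{p}$ and $m>n-\frac{n}{p}-\alpha$ exactly as you do. Your extra check on the proof of Lemma~\ref{key1} for $J_{4,k}^{2}$ is harmless but unnecessary: that lemma is stated for every real $\alpha_{2}$, and Lemma~\ref{key1 copy(4)} explicitly allows $\alpha_{2}=n-\frac{n}{p}$.
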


\begin{proof}
	By similarity, it is sufficient to prove \eqref{limitcase}. In view of Theorem \ref{embedingsq=p}, it only remains to estimate $\Lambda_{2}$. More precisely, we need to estimate the terms $J_{1,k}$ and $J_{4,k}^{1}$ arising in the estimate of $\Lambda_{2}$. Clearly, 
	\begin{align*}
	J_{1,k}(x) &\lesssim \sum_{|\beta|\le m-1} \sum_{i=-\infty}^{j} 2^{i\left(n-%
		\frac{n}{p}\right)} \|(D^{\beta}f)\chi_{R_{i}\cap\Omega}\|_{p} \\
	&\lesssim \sum_{|\beta|\le m-1} \|D^{\beta}f\|_{\dot{K}_{p}^{\,n-\frac{n}{p}%
			,1}(\Omega)} \\
	&\lesssim \|f\|_{\dot{K}_{p,m}^{\,n-\frac{n}{p},1}(\Omega)}.
	\end{align*}
	Hence, 
	\begin{align*}
	\sum_{\substack{ k\in\mathbb{Z}  \\ \delta>2^{k+2}}} 2^{k\alpha r}
	\|J_{1,k}\chi_{R_{k}\cap\Omega}\|_{p}^{r} &\lesssim \sum_{\substack{ k\in%
			\mathbb{Z}  \\ \delta>2^{k+2}}} 2^{k\left(\alpha+\frac{n}{p}\right)r} \|f\|_{%
		\dot{K}_{p,m}^{\,n-\frac{n}{p},1}(\Omega)}^{r} \\
	&\lesssim \|f\|_{\dot{K}_{p,m}^{\,n-\frac{n}{p},1}(\Omega)}^{r},
	\end{align*}
	since $\alpha>-\frac{n}{p}$.
	
	Next, by H\"{o}lder's inequality, 
	\begin{align*}
	J_{4,k}^{1}(x) &\le 2^{k(m-n)} \sum_{|\beta|=m} \int_{|y|\le 2^{k+4}}
	|D^{\beta}f(y)|\chi_{\Omega}(y)\,dy \\
	&\lesssim 2^{k(m-n)} \sum_{|\beta|=m} \sum_{i=-\infty}^{k+4} 2^{i\left(n-%
		\frac{n}{p}\right)} \|(D^{\beta}f)\chi_{R_{i}\cap\Omega}\|_{p} \\
	&\lesssim 2^{k(m-n)} \|f\|_{\dot{K}_{p,m}^{\,n-\frac{n}{p},1}(\Omega)}.
	\end{align*}
	Therefore, 
	\begin{equation*}
	2^{k\alpha} \|J_{4,k}^{1}\chi_{R_{k}\cap\Omega}\|_{p} \lesssim
	2^{k\left(m-n+\alpha+\frac{n}{p}\right)} \|f\|_{\dot{K}_{p,m}^{\,n-\frac{n}{p%
			},1}(\Omega)}.
	\end{equation*}
	Consequently, 
	\begin{equation*}
	\sum_{\substack{ k\in\mathbb{Z}  \\ \delta>2^{k-2}}} 2^{k\alpha r}
	\|J_{4,k}^{1}\chi_{R_{k}\cap\Omega}\|_{p}^{r} \lesssim \|f\|_{\dot{K}%
		_{p,m}^{\,n-\frac{n}{p},1}(\Omega)}^{r},
	\end{equation*}
	where we have used the assumption 
	$m>n-\frac{n}{p}-\alpha.$

	The proof is complete.
\end{proof}

Next, we consider the case $q=\infty$.

\begin{theorem}
	\label{embeddingsq=infinity} Let $\Omega \subset \mathbb{R}^{n}$ be a domain
	satisfying the cone condition, and let $m\in \mathbb{N}$. Assume that 
	$1<p<\infty,  1\le r<\infty,0\le \alpha_{1}\le \alpha_{2}<n-\frac{n}{p}$
	and 
	\begin{equation*}
	m>\frac{n}{p}+\alpha_{2}-\alpha_{1}.
	\end{equation*}
	Then 
	\begin{equation}
	\dot{K}_{p,m}^{\alpha_{2},r}(\Omega) \hookrightarrow \dot{K}%
	_{\infty}^{\alpha_{1},v}(\Omega),  \label{q=infinity}
	\end{equation}
	and 
	\begin{equation*}
	K_{p,m}^{\alpha_{2},r}(\Omega) \hookrightarrow
	K_{\infty}^{\alpha_{1},v}(\Omega),
	\end{equation*}
	where 
	\begin{equation*}
	v= \left\{ 
	\begin{array}{ll}
	\infty, & \text{if } \alpha_{1}=0, \\[1mm] 
	r, & \text{if } \alpha_{1}>0.%
	\end{array}
	\right.
	\end{equation*}
\end{theorem}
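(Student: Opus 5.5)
By similarity it suffices to prove \eqref{q=infinity}. By Theorem \ref{density} we may take $f\in C^{\infty}(\Omega)\cap \dot{K}_{p,m}^{\alpha_{2},r}(\Omega)$. Let $\delta$ be the height of the cone $C$ and let $j$ be fixed as in Lemma \ref{key1}. The plan is to bound $\sup_{x\in R_{k}\cap\Omega}|f(x)|$ pointwise for each $k$ via Lemma \ref{Sobolev-rep1} with $\tau=\delta$, and then sum $2^{k\alpha_{1}}\|f\chi_{R_k\cap\Omega}\|_{\infty}$ in $\ell^{v}$. As in Theorems \ref{embeddingsfirst} and \ref{embedingsq=p}, we split into the far-from-origin range $\delta\le 2^{k-2}$ and the near-origin range $\delta>2^{k-2}$, writing $|f(x)|\lesssim J_{1,k}(x)+J_{2,k}(x)$ with $J_{1,k},J_{2,k}$ as in Theorem \ref{embeddingsfirst}.

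\textbf{Range $\delta\le 2^{k-2}$.} Here $C_{x,\delta}\subset\widetilde R_{k}$. For $J_{1,k}$, Hölder gives $J_{1,k}(x)\lesssim \delta^{n-n/p}\sum_{|\beta|\le m-1}\|(D^\beta f)\chi_{\widetilde R_k\cap\Omega}\|_p$. For $J_{2,k}$, Hölder with the kernel $|x-y|^{(m-n)}\chi_{B(x,\delta)}$ in $L^{p'}$ requires $(n-m)p'<n$, i.e.\ $m>n/p$, which follows from $m>\frac np+\alpha_2-\alpha_1$. This gives $J_{2,k}(x)\lesssim \delta^{m-n/p}\sum_{|\beta|=m}\|(D^\beta f)\chi_{\widetilde R_k\cap\Omega}\|_p$. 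Multiplying by $2^{k\alpha_1}\le 2^{k\alpha_2}$ (since $2^k\gtrsim\delta$ and $\alpha_1\le\alpha_2$) and summing in $\ell^r\subset\ell^v$ yields a bound by $\|f\|_{\dot K^{\alpha_2,r}_{p,m}(\Omega)}$, using that the $\widetilde R_k$ overlap finitely.

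\textbf{Range $\delta>2^{k-2}$.} As in the estimate of $\Lambda_2$, Hölder and $\alpha_2<n-\frac np$ give $J_{1,k}(x)\lesssim\|f\|_{\dot K^{\alpha_2,\infty}_{p,m}(\Omega)}$ uniformly in $k$. The factor $2^{k\alpha_1}$ is then summable over $2^k<4\delta$ in $\ell^r$ when $\alpha_1>0$, and bounded in $\ell^\infty$ when $\alpha_1=0$; this is exactly where the two values of $v$ arise. We decompose $J_{2,k}=J_{3,k}+J_{4,k}^1+J_{4,k}^2$ as in Theorem \ref{embedingsq=p}.
\begin{itemize}
\item For $J_{3,k}$, Hölder on $C_{x,2^{k-2}}\subset\widetilde R_k$ gives $J_{3,k}(x)\lesssim 2^{k(m-n/p)}\|(D^\beta f)\chi_{\widetilde R_k\cap\Omega}\|_p$. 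Hence
\[
2^{k\alpha_1}J_{3,k}\lesssim 2^{k(m-\frac np-\alpha_2+\alpha_1)}\,2^{k\alpha_2}\|(D^\beta f)\chi_{\widetilde R_k\cap\Omega}\|_p ,
\]
where the exponent is positive and $2^k\lesssim\delta$, so this term is fine.
\item For $J_{4,k}^1$, we have $J_{4,k}^1\lesssim 2^{k(m-n)}\sum_{i\le k+4}2^{i(n-\frac np-\alpha_2)}\,2^{i\alpha_2}\|\cdot\|_p\lesssim 2^{k(m-\frac np-\alpha_2)}\|f\|_{\dot K^{\alpha_2,\infty}_{p,m}}$. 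After multiplying by $2^{k\alpha_1}$, this is summable over $2^k\lesssim\delta$ because $m>\frac np+\alpha_2-\alpha_1$.
\item For $J_{4,k}^2=\sum\digamma_{k,\beta}$, we use Lemma \ref{key1} (or Lemma \ref{key1 copy(4)} when $m>n$) with $q=\infty$ and $\alpha_1>-n/q=0$ when $\alpha_1>0$. Its hypothesis $\frac np+\alpha_2-\alpha_1\le m$ holds strictly, so $\nu=\infty$ and $\|\cdot\|_{\dot K^{\alpha_2,\infty}}\le\|\cdot\|_{\dot K^{\alpha_2,r}}$. When $\alpha_1=0$, the second (sup) estimate of those lemmas with $q=\infty$ gives a uniform bound, since $m>\frac np+\alpha_2$ forces $s=\infty$ (or $s=\infty$ because $\alpha_2<n-\frac np$).
\end{itemize}

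The main obstacle is the pointwise (i.e.\ $q=\infty$) treatment of the local singular term $J_{2,k}$. The Riesz potential route of Theorem \ref{result3} is unavailable at $q=\infty$, so it must be replaced by Hölder against $|x-y|^{(m-n)p'}$, which is legitimate precisely because $m>n/p$. A secondary issue is checking that Lemma \ref{key1} applies with $q=\infty$, via the conventions $n/q=0$ and $\|\chi_{R_k}\|_\infty=1$; its proof goes through verbatim, with the case $\alpha_1=0$ handled by the sup version.
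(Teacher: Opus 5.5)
Your proposal is correct and follows essentially the paper's proof. It uses the same split into $\delta\le 2^{k-2}$ and $\delta>2^{k-2}$, the same H\"older bounds against an $L^{p'}$-integrable kernel for the local terms (via $m>\frac np$, resp.\ $m>\frac np+\alpha_2-\alpha_1$), the same decomposition $J_{2,k}=J_{3,k}+J^1_{4,k}+J^2_{4,k}$, and Lemmas \ref{key1} and \ref{key1 copy(4)} with $q=\infty$ for $\digamma_{k,\beta}$. The differences are cosmetic: near the origin you bound $J_{1,k}$ uniformly (as in the $\Lambda_2$ estimate of Theorem \ref{embedingsq=p}) instead of splitting it into $J^1_{1,k}+J^2_{1,k}$, and you write out the case $\alpha_1=0$ via the sup estimates, which the paper leaves as ``analogous.''
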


\begin{proof}
	By similarity, it suffices to prove \eqref{q=infinity}. We also use Theorem %
	\ref{density}. Let 
	$f\in C^{\infty}(\Omega)\cap\dot{K}_{p,m}^{\alpha_{2},r}(\Omega)$, and let $\delta$ denote the dimension of the cone $C$.
	
	We only consider the case $\alpha_{1}>0$, since the case $\alpha_{1}=0$ is
	analogous. Write 
	\begin{equation*}
	\|f\|_{\dot{K}_{\infty}^{\alpha_{1},r}(\Omega)}^{r} = S_{1}+S_{2},
	\end{equation*}
	where 
	\begin{equation*}
	S_{1} = \sum_{\substack{ k\in\mathbb{Z}  \\ \delta\le 2^{k-2}}}
	2^{k\alpha_{1}r} \|f\chi_{R_{k}\cap\Omega}\|_{\infty}^{r},
	\end{equation*}
	and 
	\begin{equation*}
	S_{2} = \sum_{\substack{ k\in\mathbb{Z}  \\ \delta>2^{k-2}}}
	2^{k\alpha_{1}r} \|f\chi_{R_{k}\cap\Omega}\|_{\infty}^{r}.
	\end{equation*}
	
	\medskip
	
	\noindent \textbf{Estimate of $S_{1}$.} From Lemma \ref{Sobolev-rep1} and H%
	\"{o}lder's inequality, using the assumption $m>\frac{n}{p}$, we obtain 
	\begin{equation*}
	|f(x)| \lesssim \sum_{|\beta|\le m} \|(D^{\beta}f)\chi_{\widetilde{R}%
		_{k}\cap\Omega}\|_{p}
	\end{equation*}
	for every $x\in R_{k}\cap\Omega$ such that $ \delta\leq 2^{k-2}$, since 
	$C_{x,\delta}\subset \widetilde{R}_{k}\cap\Omega.$
	Hence, 
	\begin{align*}
	S_{1} &\lesssim \sum_{|\beta|\le m} \sum_{\substack{ k\in\mathbb{Z}  \\ %
			\delta\le 2^{k-2}}} 2^{k(\alpha_{1}-\alpha_{2})r} 2^{k\alpha_{2}r}
	\|(D^{\beta}f)\chi_{\widetilde{R}_{k}\cap\Omega}\|_{p}^{r} \\
	&\lesssim \|f\|_{\dot{K}_{p,m}^{\alpha_{2},r}(\Omega)}^{r},
	\end{align*}
	since $\alpha_{2}\ge \alpha_{1}$. Here we used estimate \eqref{emb5}.
	
	\medskip
	
	\noindent \textbf{Estimate of $S_{2}$.} Let $J_{1,k}$ and $J_{2,k}$ be
	defined as in Theorem \ref{embeddingsfirst}. We decompose 
	$J_{1,k}= J_{1,k}^{1}+J_{1,k}^{2},$
	where 
	\begin{align*}
	J_{1,k}^{1}(x) &= \sum_{|\beta|\le m-1} \int_{C_{x,2^{k+2}}}
	|D^{\beta}f(y)|\,dy, \qquad
	J_{1,k}^{2}(x) = \sum_{|\beta|\le m-1} \int_{B_{x,\delta,k}}
	|D^{\beta}f(y)|\,dy,
	\end{align*}
	
	where $x\in R_{k}\cap\Omega$. By H\"{o}lder's inequality, 
	\begin{align*}
	J_{1,k}^{1}(x) &\le \sum_{|\beta|\le m-1} \int_{|y|\le 2^{k+3}}
	|D^{\beta}f(y)|\chi_{\Omega}(y)\,dy \\
	&= \sum_{|\beta|\le m-1} \sum_{i=-\infty}^{k+3} \int_{\Omega}
	|D^{\beta}f(y)|\chi_{R_{i}}(y)\,dy \\
	&\lesssim \sum_{|\beta|\le m-1} \sum_{i=-\infty}^{k+4} 2^{i\left(n-\frac{n}{p%
		}-\alpha_{2}\right)} 2^{i\alpha_{2}}
	\|(D^{\beta}f)\chi_{R_{i}\cap\Omega}\|_{p} \\
	&\lesssim 2^{k\left(n-\frac{n}{p}-\alpha_{2}\right)} \sum_{|\beta|\le m-1}
	\sum_{i=-\infty}^{k+4} 2^{(i-k)\left(n-\frac{n}{p}-\alpha_{2}\right)}
	2^{i\alpha_{2}} \|(D^{\beta}f)\chi_{R_{i}\cap\Omega}\|_{p} \\
	&\lesssim 2^{k\left(n-\frac{n}{p}-\alpha_{2}\right)} \|f\|_{\dot{K}%
		_{p,m}^{\alpha_{2},r}(\Omega)},
	\end{align*}
	since $\alpha_{2}<n-\frac{n}{p}$. Therefore, 
	\begin{align*}
	\sum_{\substack{ k\in\mathbb{Z}  \\ \delta>2^{k-2}}} 2^{k\alpha_{1}r}
	\sup_{x\in R_{k}\cap\Omega} (J_{1,k}^{1}(x))^{r} \lesssim \|f\|_{\dot{K}%
		_{p,m}^{\alpha_{2},r}(\Omega)}^{r},
	\end{align*}
	because 
	\begin{equation*}
	\alpha_{1}+n-\frac{n}{p}-\alpha_{2}>0.
	\end{equation*}
	
	The estimate of $J_{1,k}^{2}$ follows from Lemma \ref{key1}.
	
	Next, we estimate $J_{2,k}$. We write 
	\begin{align*}
	J_{2,k}(x) &\lesssim \sum_{|\beta|\le m} \int_{C_{x,\delta}} \frac{%
		|D^{\beta}f(y)|}{|x-y|^{n-m}}\,dy \\
	&= \sum_{|\beta|\le m} \int_{C_{x,2^{k-2}}} \frac{|D^{\beta}f(y)|}{%
		|x-y|^{n-m}}\,dy 
	+ \sum_{|\beta|\le m} \int_{B_{x,\delta,k-4}} \frac{|D^{\beta}f(y)|}{%
		|x-y|^{n-m}}\,dy \\
	&=: P_{1,k}(x)+P_{2,k}(x).
	\end{align*}
	
	We have 
	\begin{align*}
	P_{1,k}(x) &\le c\sum_{|\beta|=m} 2^{k(\alpha_{2}-\alpha_{1})}
	\int_{C_{x,2^{k-2}}} \frac{|D^{\beta}f(y)|\chi_{\widetilde{R}%
			_{k}\cap\Omega}(y)} {|x-y|^{n-m-\alpha_{1}+\alpha_{2}}}\,dy \\
	&= c\,2^{k(\alpha_{2}-\alpha_{1})} \sum_{|\beta|=m}  |D^{\beta}f|\chi_{%
		\widetilde{R}_{k}\cap\Omega} \ast \chi_{\overline{B(0,\delta)}%
	}\omega_{m,\alpha_{1},\alpha_{2}}(x).
	\end{align*}
	
	By H\"{o}lder's inequality, 
	\begin{align*}
	\big\| |D^{\beta}f|\chi_{\widetilde{R}_{k}\cap\Omega} \ast \chi_{\overline{%
			B(0,\delta)}}\omega_{m,\alpha_{1},\alpha_{2}} \big\|_{\infty} \lesssim
	\|(D^{\beta}f)\chi_{\widetilde{R}_{k}\cap\Omega}\|_{p},
	\end{align*}
	since 
	$m>\frac{n}{p}+\alpha_{2}-\alpha_{1}.$
	Consequently, 
	\begin{equation*}
	\sum_{\substack{ k\in\mathbb{Z}  \\ \delta>2^{k-2}}} 2^{k\alpha_{1}r}
	\sup_{x\in R_{k}\cap\Omega} (P_{1,k}(x))^{r} \lesssim \|f\|_{\dot{K}%
		_{p,m}^{\alpha_{2},r}(\Omega)}^{r}.
	\end{equation*}
	
	Finally, we decompose $P_{2,k}$ as 
	\begin{equation}
	P_{2,k}(x) = \sum_{|\beta|\le m} \int_{B_{x,2^{k+2},k-4}} \frac{%
		|D^{\beta}f(y)|}{|x-y|^{n-m}}\,dy + \sum_{|\beta|\le m}
	\digamma_{k,\beta}(x).  \label{term1}
	\end{equation}
	The first term in \eqref{term1} can be estimated exactly as in the estimate
	of $J_{4,k}^{1}$ in Theorem \ref{embedingsq=p}, while the second term
	follows from Lemmas \ref{key1} and \ref{key1 copy(4)}. This completes the
	proof.
\end{proof}

\begin{remark}
	\label{embeddingsq=infinity copy(4)} Let $\Omega \subset \mathbb{R}^{n}$ be
	a domain satisfying the cone condition, and let $m\in \mathbb{N}$. Assume
	that $1<p<\infty $. As a consequence of Theorem \ref{embeddingsq=infinity},
	we obtain 
	\begin{equation*}
	\dot{K}_{p,m}^{\,n-\frac{n}{p},1}(\Omega )\hookrightarrow L^{\infty }(\Omega
	)
	\end{equation*}%
	and 
	\begin{equation*}
	K_{p,m}^{\,n-\frac{n}{p},1}(\Omega )\hookrightarrow L^{\infty }(\Omega )
	\end{equation*}%
	whenever $m>n.$
\end{remark}

For an arbitrary nonempty set $\Omega \subset \mathbb{R}^{n}$, we denote by $%
C_{b}(\Omega)$ the Banach space of all bounded continuous functions on $%
\Omega$, equipped with the norm 
\begin{equation*}
\|f\|_{C_{b}(\Omega)} = \sup_{x\in\Omega}|f(x)|.
\end{equation*}

Combining Theorems \ref{density} and \ref{embeddingsq=infinity}, Remark \ref%
{embeddingsq=infinity copy(4)}, and arguing as in \cite[Theorem 4.12]%
{AdamsFournier03}, we obtain the following important result.

\begin{theorem}
	\label{embeddingsq=infinity copy(3)} Let $\Omega \subset \mathbb{R}^{n}$ be
	a domain satisfying the cone condition, and let $m\in \mathbb{N}$. Assume
	that 
	$1<p<\infty, 1\le r<\infty,0\le \alpha \le n-\frac{n}{p},$
	and 
	$m>\frac{n}{p}+\alpha.$
	Then 
	\begin{equation*}
	\dot{K}_{p,m}^{\alpha,v}(\Omega) \hookrightarrow C_{b}(\Omega)
	\end{equation*}
	and 
	\begin{equation*}
	K_{p,m}^{\alpha,v}(\Omega) \hookrightarrow C_{b}(\Omega),
	\end{equation*}
	where 
	\begin{equation*}
	v= \left\{ 
	\begin{array}{ll}
	1, & \text{if } \alpha=n-\dfrac{n}{p}, \\[2mm] 
	r, & \text{otherwise.}%
	\end{array}
	\right.
	\end{equation*}
\end{theorem}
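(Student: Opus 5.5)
The plan follows \cite[Theorem 4.12]{AdamsFournier03}. There are two steps: first prove the norm inequality $\|f\|_{L^{\infty}(\Omega)}\lesssim \|f\|_{\dot{K}_{p,m}^{\alpha,v}(\Omega)}$ (and its nonhomogeneous analogue) for smooth $f$, and then upgrade it to continuity and boundedness by density.

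\textbf{Step 1 (the $L^\infty$ bound).} Take $f\in C^{\infty}(\Omega)\cap\dot{K}_{p,m}^{\alpha,v}(\Omega)$.
\begin{itemize}
\item If $0\le\alpha<n-\frac{n}{p}$, apply Theorem \ref{embeddingsq=infinity} with $\alpha_{1}=0$ and $\alpha_{2}=\alpha$. The hypothesis $m>\frac{n}{p}+\alpha_{2}-\alpha_{1}$ is exactly $m>\frac{n}{p}+\alpha$. This gives
\begin{equation*}
\dot{K}_{p,m}^{\alpha,r}(\Omega)\hookrightarrow \dot{K}_{\infty}^{0,\infty}(\Omega)=L^{\infty}(\Omega),
\end{equation*}
since $\sup_{k}\|f\chi_{R_{k}\cap\Omega}\|_{\infty}=\|f\|_{L^\infty(\Omega)}$. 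The same holds in the nonhomogeneous case.
\item If $\alpha=n-\frac{n}{p}$, then $v=1$ and $m>n$. Here Remark \ref{embeddingsq=infinity copy(4)} supplies $\dot{K}_{p,m}^{n-\frac{n}{p},1}(\Omega)\hookrightarrow L^{\infty}(\Omega)$.
\end{itemize}
In both cases we get the bound for smooth $f$. In fact Lemma \ref{Sobolev-rep1} gives it pointwise for every $x\in\Omega$, not merely almost everywhere, since the estimates in the proof of Theorem \ref{embeddingsq=infinity} are pointwise bounds on $|f(x)|$. Hence $\sup_{x\in\Omega}|f(x)|\lesssim\|f\|_{\dot{K}_{p,m}^{\alpha,v}(\Omega)}$.

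\textbf{Step 2 (density).} Let $f$ be a general element of $\dot{K}_{p,m}^{\alpha,v}(\Omega)$. Theorem \ref{density} applies: $v<\infty$, and either $-\frac{n}{p}<0\le\alpha<n-\frac{n}{p}$, or $\alpha=n-\frac{n}{p}$ with $v=1$. So there are $f_{j}\in C^{\infty}(\Omega)\cap\dot{K}_{p,m}^{\alpha,v}(\Omega)$ with $f_{j}\to f$ in norm.

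By Step 1, $(f_{j})$ is Cauchy in $C_{b}(\Omega)$. Note that $f_j$ is a priori only continuous, but bounded by Step 1. Hence $f_{j}\to g$ uniformly, with $g\in C_{b}(\Omega)$.

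We also need $f=g$ almost everywhere. Convergence in $\dot{K}_{p}^{\alpha,v}(\Omega)$ implies convergence in $L^{p}$ on each $R_{k}\cap\Omega$, so a subsequence converges to $f$ almost everywhere. Therefore $f=g$ a.e., and
\begin{equation*}
\|g\|_{C_{b}(\Omega)}=\lim_{j}\|f_{j}\|_{C_b(\Omega)}\lesssim\|f\|_{\dot{K}_{p,m}^{\alpha,v}(\Omega)}.
\end{equation*}
Thus the map $f\mapsto g$ is the desired continuous embedding. The nonhomogeneous case is identical, using part (ii) of Theorem \ref{density}, which has no restriction on $\alpha$.

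\textbf{Main obstacle.} I expect the delicate point to be the endpoint $\alpha=n-\frac{n}{p}$. Remark \ref{embeddingsq=infinity copy(4)} is stated as a consequence of Theorem \ref{embeddingsq=infinity}, but that theorem assumes $\alpha_{2}<n-\frac{n}{p}$. One must therefore check that the $S_{1}$ and $S_{2}$ estimates survive with $\alpha_{2}=n-\frac{n}{p}$ and $q=1$:
\begin{itemize}
\item Replace the geometric sums $\sum_{i\le k+4}2^{(i-k)(n-\frac{n}{p}-\alpha_{2})}$ by the $\ell^{1}$ bound, as in the proof of Theorem \ref{embedingsq=p copy(1)}.
\item Use the $s=1$ cases of Lemmas \ref{key1} and \ref{key1 copy(4)} for the terms $\digamma_{k,\beta}$.
\end{itemize}
I would verify this carefully, and note that $\alpha_1=0$ makes the required sum over $k$ a supremum, so no summability in $k$ is needed.
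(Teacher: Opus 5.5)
Your proposal is correct and follows the paper's route: the paper obtains this theorem by combining Theorem \ref{embeddingsq=infinity} with $\alpha_{1}=0$ (so the target is $\dot{K}_{\infty}^{0,\infty}(\Omega)=L^{\infty}(\Omega)$), Remark \ref{embeddingsq=infinity copy(4)} for $\alpha=n-\frac{n}{p}$, the density Theorem \ref{density}, and the Adams--Fournier completion argument, exactly as you do. Your worry about the endpoint is justified, since Theorem \ref{embeddingsq=infinity} does not literally cover $\alpha_{2}=n-\frac{n}{p}$ and the paper passes over this, but the check is easier than you expect: for $m>n$ the kernel $|x-y|^{m-n}$ is bounded by $\delta^{m-n}$ on $C_{x,\delta}$, so Lemma \ref{Sobolev-rep1} gives $|f(x)|\lesssim\sum_{|\beta|\le m}\|D^{\beta}f\|_{L^{1}(\Omega)}$, and H\"{o}lder's inequality on each $R_{i}$ yields $\|g\|_{L^{1}(\Omega)}\lesssim\|g\|_{\dot{K}_{p}^{n-\frac{n}{p},1}(\Omega)}$.
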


We now study the  case 
\begin{equation*}
m\ge \frac{n}{p}+\alpha_{2}-\alpha_{1}
\end{equation*}
in Theorem \ref{embeddingsfirst copy(1)}.

\begin{theorem}
	\label{embedingsp<q} Let $\Omega \subset \mathbb{R}^{n}$ be a domain
	satisfying the cone condition, and let $m\in \mathbb{N}$. Assume that 
	$1<p<q<\infty, 1\le r<\infty,$
	$-\frac{n}{q}<\alpha_{1}\le \alpha_{2}<n-\frac{n}{p},$
	and 
	\begin{equation*}
	m\ge \frac{n}{p}+\alpha_{2}-\alpha_{1}.
	\end{equation*}
	Then 
	\begin{equation}
	\dot{K}_{p,m}^{\alpha_{2},r}(\Omega) \hookrightarrow \dot{K}%
	_{q}^{\alpha_{1},r}(\Omega),  \label{emb10}
	\end{equation}
	and 
	\begin{equation*}
	K_{p,m}^{\alpha_{2},r}(\Omega) \hookrightarrow K_{q}^{\alpha_{1},r}(\Omega)
	\end{equation*}
	hold.
\end{theorem}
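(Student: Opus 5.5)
We will follow the scheme of the proofs of Theorems \ref{embedingsq=p} and \ref{embeddingsq=infinity}, working directly with the integral representation of Lemma \ref{Sobolev-rep1}. Interpolation cannot be used here, since $m\ge \frac{n}{p}+\alpha_{2}-\alpha_{1}$ means the exponent $p^{\ast}$ of Theorem \ref{embeddingsfirst copy(1)} does not exist.

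By similarity we treat only \eqref{emb10}. By Theorem \ref{density} we take $f\in C^{\infty}(\Omega)\cap\dot{K}_{p,m}^{\alpha_{2},r}(\Omega)$ and let $\delta$ be the height of the cone. We split $\|f\|_{\dot{K}_{q}^{\alpha_{1},r}(\Omega)}^{r}=\Lambda_{1}+\Lambda_{2}$ according to $\delta\le 2^{k-2}$ or $\delta>2^{k-2}$.

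\textbf{Estimate of $\Lambda_{1}$.} For $x\in R_{k}\cap\Omega$ with $\delta\le 2^{k-2}$ we have $C_{x,\delta}\subset\widetilde{R}_{k}\cap\Omega$. Lemma \ref{Sobolev-rep1} then bounds $|f(x)|$ by
\begin{equation*}
\sum_{|\beta|\le m} |D^{\beta}f|\chi_{\widetilde{R}_{k}\cap\Omega}\ast\bigl(\chi_{\overline{B(0,\delta)}}\,|\cdot|^{m-n}\bigr)(x),
\end{equation*}
where the lower-order terms carry the harmless factor $\delta^{|\beta|-n}$. We apply Young's inequality with $1+\frac1q=\frac1p+\frac1s$, which requires the kernel $\chi_{\overline{B(0,\delta)}}|\cdot|^{m-n}$ to lie in $L^{s}$, i.e. $(n-m)s<n$. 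This is exactly $m>\frac{n}{p}-\frac{n}{q}$, which holds because $m\ge \frac np+\alpha_2-\alpha_1\ge \frac np>\frac np-\frac nq$. (If $m\ge n$ the kernel is bounded on the ball and nothing is needed.) This gives $\|f\chi_{R_{k}\cap\Omega}\|_{q}\lesssim\sum_{|\beta|\le m}\|(D^{\beta}f)\chi_{\widetilde{R}_{k}\cap\Omega}\|_{p}$ with a constant depending only on $\delta$. Since $2^{k\alpha_{1}}\le 2^{k\alpha_{2}}$ for $2^{k}\ge 4\delta$, inequality \eqref{emb5} yields $\Lambda_{1}\lesssim\|f\|^{r}_{\dot{K}_{p,m}^{\alpha_{2},r}(\Omega)}$.

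\textbf{Estimate of $\Lambda_{2}$.} As in Theorem \ref{embeddingsq=infinity}, we write $J_{1,k}=J_{1,k}^{1}+J_{1,k}^{2}$ and $J_{2,k}=P_{1,k}+P_{2,k}$, and split $P_{2,k}$ further as in \eqref{term1}. The pieces are handled as follows.
\begin{itemize}
\item For $J_{1,k}^{1}$, H\"older's inequality and $\alpha_{2}<n-\frac np$ give the pointwise bound $2^{k(n-\frac np-\alpha_2)}\|f\|_{\dot{K}^{\alpha_2,\infty}_{p,m}}$. Multiplying by $2^{k(\alpha_1+\frac nq)}$ leaves an exponent $n-\frac np+\frac nq+\alpha_1-\alpha_2>0$, so the sum over $2^{k}<4\delta$ converges. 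Alternatively, keeping the sum over $i$, Lemma \ref{lq-inequality} gives the $\ell^r$ bound directly.
\item The terms $J_{1,k}^{2}$ and $\sum\digamma_{k,\beta}$ are controlled by Lemmas \ref{key1} and \ref{key1 copy(4)}. Their hypotheses, $\frac np-\frac nq+\alpha_2-\alpha_1\le m$ and $\alpha_1>-\frac nq$, hold here.
\item For $P_{1,k}$ we use Young's inequality with the kernel $\chi_{\overline{B(0,2^{k-2})}}|\cdot|^{m-n}$. Its $L^{s}$-norm is $\lesssim 2^{k(m-\frac np+\frac nq)}$ when $m<n$. Weighting by $2^{k\alpha_{1}}$ and trading $2^{k\alpha_1}$ for $2^{k\alpha_2}\cdot2^{k(\alpha_1-\alpha_2)}$ leaves the factor $2^{k(m-\frac np+\frac nq+\alpha_1-\alpha_2)}$. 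Its exponent is $\ge \frac nq>0$, so it is bounded for $2^{k}<4\delta$.
\item The annular term $J_{4,k}^{1}$ is estimated as in Theorem \ref{embedingsq=p}. Its $L^{q}(R_{k})$-norm gives $2^{k(m-n+\frac nq+\alpha_1)}\sum_{i\le k+4}2^{i(n-\frac np-\alpha_2)}2^{i\alpha_2}\|\cdot\|_p$. After factoring this is $2^{k(m-\frac np+\frac nq+\alpha_1-\alpha_2)}$ times an $\ell^{r}$-bounded sequence, by Lemma \ref{lq-inequality}, and the prefactor is bounded for $2^k<4\delta$.
\end{itemize}

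\textbf{Main obstacle.} The delicate point is the endpoint $m=\frac np+\alpha_2-\alpha_1$ in the Lemma \ref{key1} step. There $\mu=m-\frac np-\alpha_2=-\alpha_1$ may be negative, zero or positive. When $\mu<0$ we need the $\nu$-index bookkeeping of Lemma \ref{key1}: since $m>\frac np-\frac nq+\alpha_2-\alpha_1$, we get $\nu=\infty$, and that norm is dominated by the $\dot K^{\alpha_2,r}$ norm. When $\mu=0$ the logarithmic factor $(j-k)$ appears, and it is absorbed by $2^{kr(\alpha_1+\frac nq)}$. We must also handle the regime $m\ge n$, where the kernel bounds change to the $\delta^{m-n}$ form of Lemma \ref{key1 copy(4)}. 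We expect to verify each of these subcases explicitly, since they are the only places where the endpoint equality matters.
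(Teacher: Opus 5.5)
Your proof is correct. It has the same skeleton as the paper's: the split at $\delta\le 2^{k-2}$ versus $\delta>2^{k-2}$, a local convolution estimate on $\widetilde{R}_k$, H\"older for the annulus $2^{k-2}<|x-y|\le 2^{k+2}$, and Lemmas \ref{key1} and \ref{key1 copy(4)} for $\digamma_{k,\beta}$.

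The real difference is the local estimate. The paper moves the weight into the kernel, bounding the top-order term by $2^{k(\alpha_2-\alpha_1)}\,|D^{\beta}f|\chi_{\widetilde{R}_k\cap\Omega}\ast\chi_{\overline{B(0,\delta)}}\omega_{m,\alpha_1,\alpha_2}$. It then proves $L^p\to L^q$ boundedness of this operator by Marcinkiewicz interpolation between $L^{p_1}\to L^{p_1}$ and $L^{p_2}\to L^{\infty}$. That step needs $m>\frac{n}{p_2}+\alpha_2-\alpha_1$ for an auxiliary $p_2>p$. The paper's sup-plus-geometric-series treatment of $J^1_{4,k}$ also needs $m>\frac np-\frac nq+\alpha_2-\alpha_1$ strictly.

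You keep the kernel $|\cdot|^{m-n}$ and apply Young's inequality with $1+\frac1q=\frac1p+\frac1s$ directly. You pay for the weight afterwards: a $\delta$-dependent constant when $2^k\ge 4\delta$, and the factor $2^{k(m-\frac np+\frac nq+\alpha_1-\alpha_2)}$ when $2^k<4\delta$. On the annulus you use Lemma \ref{lq-inequality}. This is more elementary. It also shows that the hypothesis on $m$ enters only through $m>\frac np-\frac nq$ and $m\ge\frac np-\frac nq+\alpha_2-\alpha_1$. In the equality case of the latter, Lemma \ref{key1} gives $\nu=r$, which is still dominated by the $\dot{K}^{\alpha_2,r}_{p,m}$ norm. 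So your argument actually proves somewhat more than stated. The paper's route is heavier, and its advantage is only that it mirrors the kernel bounds already used in Theorems \ref{embedingsq=p} and \ref{embeddingsq=infinity}.

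A few minor points.
\begin{itemize}
\item Splitting $J_{1,k}$ is unnecessary. Moreover, Lemma \ref{key1 copy(4)} does not literally control $J^2_{1,k}$ when $m>n$, since that term carries no kernel $|x-y|^{m-n}$. Instead, bound $J_{1,k}$ as a whole by integrating over $B(0,5\delta)$, as the paper does; this uses only $\alpha_2<n-\frac np$ and $\alpha_1>-\frac nq$.
\item Your ``main obstacle'' is not an obstacle. The cases $\mu<0$, $\mu=0$, $\mu>0$ are exactly the three cases handled inside the proof of Lemma \ref{key1}, so checking its hypotheses, as you did, suffices.
\item Interpolation is not entirely excluded; only the route through $p^{\ast}$ is. The remark following this theorem in the paper recovers the strict case $m>\frac np+\alpha_2-\alpha_1$, within the range of Theorem \ref{embeddingsq=infinity}, by interpolating between Theorems \ref{embedingsq=p} and \ref{embeddingsq=infinity}.
\end{itemize}
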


\begin{proof}
	By similarity, it suffices to prove \eqref{emb10}. Let 
	$f\in C^{\infty}(\Omega)\cap \dot{K}_{p,m}^{\alpha_{2},r}(\Omega)$,
	and let $\delta$ denote the dimension of the cone $C$. We write 
	\begin{equation*}
	\|f\|_{\dot{K}_{q}^{\alpha_{1},r}(\Omega)}^{r} = \Delta_{1}+\Delta_{2},
	\end{equation*}
	where 
	\begin{equation*}
	\Delta_{1} = \sum_{\substack{ k\in\mathbb{Z}  \\ \delta\le 2^{k-2}}}
	2^{k\alpha_{1}r} \|f\chi_{R_{k}\cap\Omega}\|_{q}^{r},\quad\Delta_{2} = \sum_{\substack{ k\in\mathbb{Z}  \\ \delta>2^{k-2}}}
	2^{k\alpha_{1}r} \|f\chi_{R_{k}\cap\Omega}\|_{q}^{r}.
	\end{equation*}

	Let $J_{1,k}$ and $J_{2,k}$ be defined as in Theorem \ref{embeddingsfirst}.
	
	\medskip
	
	\noindent \textbf{Estimate of $\Delta_{1}$.} Let $x\in R_{k}\cap\Omega$,  $k\in\mathbb{Z}$,  $\delta\le 2^{k-2}$. Recall that 
	\begin{equation*}
	J_{1,k}(x) \le \sum_{|\beta|\le m-1} |D^{\beta}f| \chi_{\widetilde{R}%
		_{k}\cap\Omega} \ast \chi_{\overline{B(0,\delta)}}(x),
	\end{equation*}
	and 
	\begin{equation*}
	J_{2,k}(x) \lesssim 2^{k(\alpha_{2}-\alpha_{1})} \sum_{|\beta|=m}
	|D^{\beta}f| \chi_{\widetilde{R}_{k}\cap\Omega} \ast \chi_{\overline{%
			B(0,\delta)}} \omega_{m,\alpha_{1},\alpha_{2}}(x).
	\end{equation*}
	
	Let $g$ be a measurable function on $\Omega$, and define 
	\begin{equation*}
	T(g)(x) = |g| \ast \chi_{\overline{B(0,\delta)}}
	\varpi_{m,\alpha_{1},\alpha_{2}}(x),
	\end{equation*}
	where 
	\begin{equation*}
	\varpi_{m,\alpha_{1},\alpha_{2}} \equiv \omega_{m,\alpha_{1},\alpha_{2}} \quad 
	\text{or} \quad \varpi_{m,\alpha_{1},\alpha_{2}}\equiv 1.
	\end{equation*}
	
	Choose  $ 1<p_{1}<p<p_{2}<\infty $
	such that 
	\begin{equation*}
	\frac{1}{p} = \frac{\theta}{p_{1}} + \frac{1-\theta}{p_{2}}, \qquad \theta=%
	\frac{p_{1}}{q}<1.
	\end{equation*}
	
	By Young's inequality, and using the fact that 
	$m>\frac{n}{p_{2}}+\alpha_{2}-\alpha_{1}$
	when 
	$\varpi_{m,\alpha_{1},\alpha_{2}} = \omega_{m,\alpha_{1},\alpha_{2}},$
	we obtain 
	\begin{align*}
	\|T(g)\|_{\infty} &\lesssim \|\chi_{\overline{B(0,\delta)}}
	\omega_{m,\alpha_{1},\alpha_{2}}\|_{p_{2}^{\prime }} \|g\|_{p_{2}} \\
	&\lesssim \|g\|_{p_{2}}.
	\end{align*}
	Hence $T$ is bounded from $L^{p_{2}}(\mathbb{R}^{n})$ into $L^{\infty}(%
	\mathbb{R}^{n})$. Again by Young's inequality, together with 
	$m>\alpha_{2}-\alpha_{1},$
	we obtain 
	\begin{align*}
	\|T(g)\|_{p_{1}} &\lesssim \|\chi_{\overline{B(0,\delta)}}
	\omega_{m,\alpha_{1},\alpha_{2}}\|_{1} \|g\|_{p_{1}} \\
	&\lesssim \|g\|_{p_{1}}.
	\end{align*}
	Therefore $T$ is bounded from $L^{p_{1}}(\mathbb{R}^{n})$ into itself. By
	the Marcinkiewicz interpolation theorem, $T$ is bounded from $L^{p}(\mathbb{R%
	}^{n})$ into $L^{q}(\mathbb{R}^{n})$. Consequently, 
	\begin{equation*}
	\|J_{1,k}\chi_{R_{k}\cap\Omega}\|_{q} \lesssim \sum_{|\beta|\le m-1}
	\|(D^{\beta}f)\chi_{\widetilde{R}_{k}\cap\Omega}\|_{p},
	\end{equation*}
	and 
	\begin{equation*}
	\|J_{2,k}\chi_{R_{k}\cap\Omega}\|_{q} \lesssim 2^{k(\alpha_{2}-\alpha_{1})}
	\sum_{|\beta|=m} \|(D^{\beta}f)\chi_{\widetilde{R}_{k}\cap\Omega}\|_{p}.
	\end{equation*}
	Combining these estimates yields 
	\begin{equation*}
	\Delta_{1} \lesssim \|f\|_{\dot{K}_{p,m}^{\alpha_{2},r}(\Omega)}^{r}.
	\end{equation*}
	
	\medskip
	
	\noindent \textbf{Estimate of $\Delta_{2}$.} From the proof of Theorem \ref%
	{embeddingsfirst}, we already know that 
	\begin{equation*}
	\sum_{\substack{ k\in\mathbb{Z}  \\ \delta>2^{k+2}}} 2^{k\alpha_{1}r}
	\|J_{1,k}\chi_{R_{k}\cap\Omega}\|_{q}^{r} \lesssim \|f\|_{\dot{K}%
		_{p,m}^{\alpha_{2},r}(\Omega)}^{r},
	\end{equation*}
	since 
	$\alpha_{2}<n-\frac{n}{p} $  and $  \alpha_{1}>-\frac{n}{q}.$
	
	Let $J_{3,k}$ and $J_{4,k}$ be defined as in Theorem \ref{embedingsq=p}.
	Recall that 
	$J_{2,k}=J_{3,k}+J_{4,k}.$
	Since 
	$C_{x,2^{k-2}} \subset \widetilde{R}_{k}\cap\Omega,$
	we obtain 
	\begin{equation*}
	J_{3,k}(x) \lesssim 2^{k(\alpha_{2}-\alpha_{1})} \sum_{|\beta|=m}
	|D^{\beta}f| \chi_{\widetilde{R}_{k}\cap\Omega} \ast \chi_{\overline{%
			B(0,\delta)}} \omega_{m,\alpha_{1},\alpha_{2}}(x).
	\end{equation*}
	
	Arguing exactly as in the estimate of $\Delta_{1}$, we get 
	\begin{equation*}
	\big\| |D^{\beta}f| \chi_{\widetilde{R}_{k}\cap\Omega} \ast \chi_{\overline{%
			B(0,\delta)}} \omega_{m,\alpha_{1},\alpha_{2}} \big\|_{q} \lesssim
	\|(D^{\beta}f)\chi_{\widetilde{R}_{k}\cap\Omega}\|_{p}, \qquad |\beta|=m.
	\end{equation*}
	Hence, 
	\begin{equation}
	\sum_{\substack{ k\in\mathbb{Z}  \\ \delta>2^{k-2}}} 2^{k\alpha_{1}r}
	\|J_{3,k}\chi_{R_{k}\cap\Omega}\|_{q}^{r} \lesssim \|f\|_{\dot{K}%
		_{p,m}^{\alpha_{2},r}(\Omega)}^{r}.  \label{lambda21}
	\end{equation}
	
	Now write 
	$J_{4,k} = J_{4,k}^{1}+J_{4,k}^{2},$
	and recall that
	\begin{equation*}
	J_{4,k}^{1} \lesssim 2^{k(m-n)} \sum_{|\beta|=m} \sum_{i=-\infty}^{k+4}
	2^{i\left(n-\frac{n}{p}-\alpha_{2}\right)} 2^{i\alpha_{2}}
	\|(D^{\beta}f)\chi_{R_{i}\cap\Omega}\|_{p}.
	\end{equation*}
	
	Since $\alpha_{2}<n-\frac{n}{p},$ we obtain 
	\begin{align*}
	2^{k\alpha_{1}} \|J_{4,k}^{1}\chi_{R_{k}\cap\Omega}\|_{q} &\lesssim
	2^{k\left(m+\alpha_{1}-\alpha_{2}+\frac{n}{q}-\frac{n}{p}\right)} 
	\sum_{|\beta|=m}\sup_{i\in\mathbb{Z}} \left( 2^{i\alpha_{2}}
	\|(D^{\beta}f)\chi_{R_{i}\cap\Omega}\|_{p} \right) \\
	&\lesssim 2^{k\left(m+\alpha_{1}-\alpha_{2}+\frac{n}{q}-\frac{n}{p}\right)}
	\|f\|_{\dot{K}_{p,m}^{\alpha_{2},r}(\Omega)}.
	\end{align*}
	Therefore, 
	\begin{equation*}
	\sum_{\substack{ k\in\mathbb{Z}  \\ \delta>2^{k-2}}} 2^{k\alpha_{1}r}
	\|J_{4,k}^{1}\chi_{R_{k}\cap\Omega}\|_{q}^{r} \lesssim \|f\|_{\dot{K}%
		_{p,m}^{\alpha_{2},r}(\Omega)}^{r},
	\end{equation*}
	where we used the assumption
	$m> \frac{n}{p} + \alpha_{2} - \alpha_{1} - \frac{n}{q}.$
	
	Finally, Lemmas \ref{key1} and \ref{key1 copy(4)} imply that 
	\begin{equation*}
	\sum_{\substack{ k\in\mathbb{Z}  \\ \delta>2^{k-2}}} 2^{k\alpha_{1}r}
	\|J_{4,k}^{2}\chi_{R_{k}\cap\Omega}\|_{q}^{r} \lesssim \|f\|_{\dot{K}%
		_{p,m}^{\alpha_{2},r}(\Omega)}^{r}.
	\end{equation*}
	This completes the proof.
\end{proof}

\begin{remark}
	When 
	$m>\frac{n}{p}+\alpha _{2}-\alpha _{1},$
	Theorem \ref{embedingsp<q} can also be derived directly from Theorems \ref%
	{embedingsq=p} and \ref{embeddingsq=infinity}. Indeed, let 
	$f\in \dot{K}_{p,m}^{\alpha _{2},r}(\Omega ),$
	and set $\theta =\frac{p}{q}.$
	By the interpolation inequality \eqref{Interpolation}, we obtain 
	\begin{equation*}
	\Vert f\Vert _{\dot{K}_{q}^{\alpha _{1},r}(\Omega )}\leq \Vert f\Vert _{\dot{%
			K}_{p}^{\alpha _{1},r}(\Omega )}^{\theta }\,\Vert f\Vert _{\dot{K}_{\infty
		}^{\alpha _{1},r}(\Omega )}^{1-\theta }.
	\end{equation*}%
	Applying Theorems \ref{embedingsq=p} and \ref{embeddingsq=infinity}, we
	deduce that 
	\begin{equation*}
	\Vert f\Vert _{\dot{K}_{q}^{\alpha _{1},r}(\Omega )}\lesssim \Vert f\Vert _{%
		\dot{K}_{p,m}^{\alpha _{2},r}(\Omega )}.
	\end{equation*}%
	Hence, 
	\begin{equation*}
	\dot{K}_{p,m}^{\alpha _{2},r}(\Omega )\hookrightarrow \dot{K}_{q}^{\alpha
		_{1},r}(\Omega ).
	\end{equation*}
	
	The same argument applies to the nonhomogeneous Herz spaces.
\end{remark}

As a consequence of Theorem \ref{embedingsp<q}, we obtain the following
result.

\begin{theorem}
	\label{embedingsp<q copy(1)} Let $\Omega \subset \mathbb{R}^{n}$ be a domain
	satisfying the cone condition and let $m\in \mathbb{N}_{0}$. Let 
	$1<p<q<\infty,1\leq r<\infty,$
	and assume that 
	\begin{equation*}
	-\frac{n}{q}<\alpha \leq n-\frac{n}{p}, \qquad m\geq n-\alpha.
	\end{equation*}
	Then 
	\begin{equation*}
	\dot{K}_{p,m}^{n-\frac{n}{p},1}(\Omega) \hookrightarrow \dot{K}%
	_{q}^{\alpha,r}(\Omega)
	\end{equation*}
	and 
	\begin{equation*}
	K_{p,m}^{n-\frac{n}{p},1}(\Omega) \hookrightarrow K_{q}^{\alpha,r}(\Omega).
	\end{equation*}
	In particular, 
	\begin{equation*}
	\dot{K}_{p,m}^{n-\frac{n}{p},1}(\Omega) \hookrightarrow L^{q}(\Omega),\qquad m\geq n
	\end{equation*}
	and 
	\begin{equation*}
	K_{p,m}^{n-\frac{n}{p},1}(\Omega) \hookrightarrow L^{q}(\Omega), \qquad
	m\geq n.
	\end{equation*}
\end{theorem}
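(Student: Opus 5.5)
The plan is to obtain this as a direct specialization of Theorem \ref{embedingsp<q} with $\alpha_{2}=n-\frac{n}{p}$ and $\alpha_{1}=\alpha$. The only obstacle is that Theorem \ref{embedingsp<q} is stated under the strict inequality $\alpha_{2}<n-\frac{n}{p}$ and uses the space $\dot{K}_{p,m}^{\alpha_{2},r}$. So we must check that each step of its proof survives the endpoint $\alpha_{2}=n-\frac{n}{p}$ once the outer index $r$ on the right is replaced by $1$. Note first that the condition $m\geq \frac{n}{p}+\alpha_{2}-\alpha_{1}$ becomes exactly $m\geq n-\alpha$. Also, since $\alpha\le n-\frac{n}{p}$ and $m\ge n-\alpha\ge \frac{n}{p}>0$, we have $m\in\mathbb{N}$, so Lemma \ref{Sobolev-rep1} applies. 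By Theorem \ref{density}(i), which allows $\alpha=n-\frac{n}{p}$ with $q=1$, it suffices to treat $f\in C^{\infty}(\Omega)\cap \dot{K}_{p,m}^{n-\frac{n}{p},1}(\Omega)$.

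We split $\|f\|_{\dot{K}_{q}^{\alpha,r}(\Omega)}^{r}=\Delta_{1}+\Delta_{2}$ as in the proof of Theorem \ref{embedingsp<q}.

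For $\Delta_{1}$ the argument is local. The operator $T$ is bounded from $L^{p}$ to $L^{q}$ by Marcinkiewicz interpolation, which uses only $m>\alpha_{2}-\alpha_{1}$ and $m>\frac{n}{p_{2}}+\alpha_{2}-\alpha_{1}$ for some $p_{2}>p$. Both follow from $m\ge n-\alpha$ (the second after choosing $p_2$ close to $p$). This gives
\[
\Delta_{1}\lesssim \sum_{|\beta|\le m}\sum_{k} 2^{k\alpha_{2}r}\|(D^{\beta}f)\chi_{\widetilde{R}_{k}\cap\Omega}\|_{p}^{r},
\]
which is controlled by the $\dot{K}_{p,m}^{n-\frac{n}{p},r}$ norm. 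That norm is in turn bounded by the $\dot{K}_{p,m}^{n-\frac{n}{p},1}$ norm via \eqref{herz-emb}. The term $J_{3,k}$ in $\Delta_2$ is handled identically.

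The remaining terms in $\Delta_{2}$ are exactly where $\alpha_{2}<n-\frac{n}{p}$ was used, and we redo them as in the proof of Theorem \ref{embedingsq=p copy(1)}.

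For $J_{1,k}$ with $\delta>2^{k-2}$, Hölder's inequality gives
\[
J_{1,k}(x)\lesssim\sum_{|\beta|\le m-1}\sum_{i\le j}2^{i(n-\frac{n}{p})}\|(D^{\beta}f)\chi_{R_i\cap\Omega}\|_p\lesssim \|f\|_{\dot{K}_{p,m}^{n-\frac{n}{p},1}(\Omega)}.
\]
Summing over $k$ then converges because $\alpha>-\frac{n}{q}$.

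For $J_{4,k}^{1}$ we get $J_{4,k}^{1}\lesssim 2^{k(m-n)}\|f\|_{\dot{K}_{p,m}^{n-\frac{n}{p},1}}$, hence
\[
2^{k\alpha}\|J_{4,k}^1\chi_{R_k\cap\Omega}\|_q\lesssim 2^{k(m-n+\alpha+\frac{n}{q})}\|f\|.
\]
The sum over $k$ with $2^{k}<4\delta$ converges since $m-n+\alpha+\frac{n}{q}\ge \frac nq>0$.

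Finally, $J_{4,k}^{2}=\sum_{|\beta|=m}\digamma_{k,\beta}$ is handled by Lemma \ref{key1} when $m\le n$, and by Lemma \ref{key1 copy(4)} when $m>n$. Lemma \ref{key1 copy(4)} permits $\alpha_{2}=n-\frac{n}{p}$, and its bound uses only the $\dot{K}^{\alpha_2,\infty}$ norm, which is dominated by the $\dot{K}^{\alpha_2,1}$ norm. For Lemma \ref{key1}, the hypothesis $\frac{n}{p}-\frac{n}{q}+\alpha_{2}-\alpha_{1}=n-\frac nq-\alpha<m$ holds strictly, so $\nu=\infty$. Again the $\dot{K}^{\alpha_2,\infty}$ norm is dominated by the $\dot{K}^{\alpha_2,1}$ norm. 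The step I expect to need the most care is this one: checking that Lemma \ref{key1} holds verbatim at $\alpha_{2}=n-\frac{n}{p}$ in the case $\mu=m-n\le 0$, including the logarithmic Case 3 with $\mu=0$. The factor $(j-k)$ there is summable against $2^{kr(\alpha+\frac nq)}$, so it should go through.

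The nonhomogeneous statement follows identically, using Lemmas \ref{key1 copy(1)} and \ref{key1 copy(2)}. The particular case is $\alpha=0$: then $m\ge n$, and $\dot{K}_{q}^{0,r}\hookrightarrow L^{q}$ by \eqref{herz-emb} with $r\le q$ (e.g. $r=1$); similarly $K_q^{0,r}\hookrightarrow L^q$.
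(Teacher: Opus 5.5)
Your proposal is correct and follows the paper's route. The paper only asserts this result as a consequence of Theorem \ref{embedingsp<q}, and the endpoint modifications you make are the natural way to justify that claim: rerun that proof with $\alpha_2=n-\frac{n}{p}$ and $\alpha_1=\alpha$, keep the local terms ($\Delta_1$, $J_{3,k}$) via $\ell^1\hookrightarrow\ell^r$, redo $J_{1,k}$ and $J^{1}_{4,k}$ as in Theorem \ref{embedingsq=p copy(1)}, and treat $J^{2}_{4,k}$ with Lemmas \ref{key1} and \ref{key1 copy(4)}, which you correctly check still hold at this endpoint with $\nu=\infty$. Your observation that $m\ge n-\alpha\ge \frac{n}{p}>0$ forces $m\in\mathbb{N}$, so that Lemma \ref{Sobolev-rep1} applies, fills a detail the paper leaves implicit.
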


In the previous results, the case $q<p$ has not yet been considered. The
following theorem provides a positive answer in this situation.

\begin{theorem}
	\label{embedingsq<p} Let $\Omega \subset \mathbb{R}^{n}$ be a domain
	satisfying the cone condition and let $m\in \mathbb{N}$. Let 
	$1\leq q<p<\infty,  1\leq r<\infty,   \alpha_{1}>-\frac{n}{q},$
	and assume that 
	\begin{equation*}
	\alpha_{1}+\frac{n}{q}-\frac{n}{p} \leq \alpha_{2} < n-\frac{n}{p},
	\end{equation*}
	and 
	\begin{equation*}
	m\geq \frac{n}{p}-\frac{n}{q}+\alpha_{2}-\alpha_{1}.
	\end{equation*}
	Then 
	\begin{equation*}
	\dot{K}_{p,m}^{\alpha_{2},r}(\Omega) \hookrightarrow \dot{K}%
	_{q}^{\alpha_{1},r}(\Omega)
	\end{equation*}
	and 
	\begin{equation*}
	K_{p,m}^{\alpha_{2},r}(\Omega) \hookrightarrow K_{q}^{\alpha_{1},r}(\Omega)
	\end{equation*}
	hold.
\end{theorem}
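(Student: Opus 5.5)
The case $q<p$ is simplest to handle by a Hölder-type reduction to Theorem \ref{embedingsq=p}. On each annulus $R_{k}\cap\Omega$, Hölder's inequality together with \eqref{est-norm} gives
\begin{equation*}
\|f\chi_{R_{k}\cap\Omega}\|_{q}\lesssim 2^{kn(\frac{1}{q}-\frac{1}{p})}\|f\chi_{R_{k}\cap\Omega}\|_{p}.
\end{equation*}
Hence, with $\gamma=\alpha_{1}+\frac{n}{q}-\frac{n}{p}$,
\begin{equation*}
\|f\|_{\dot{K}_{q}^{\alpha_{1},r}(\Omega)}\lesssim \|f\|_{\dot{K}_{p}^{\gamma,r}(\Omega)}.
\end{equation*}
The problem is therefore reduced to proving $\dot{K}_{p,m}^{\alpha_{2},r}(\Omega)\hookrightarrow \dot{K}_{p}^{\gamma,r}(\Omega)$.

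For this second step I would invoke Theorem \ref{embedingsq=p} with $\alpha_{1}$ replaced by $\gamma$, and check its hypotheses in turn:
\begin{itemize}
\item $\gamma>-\frac{n}{p}$ is equivalent to $\alpha_{1}>-\frac{n}{q}$, which is assumed.
\item $\gamma\le\alpha_{2}$ is exactly the assumed lower bound $\alpha_{1}+\frac{n}{q}-\frac{n}{p}\le\alpha_{2}$.
\item $\alpha_{2}<n-\frac{n}{p}$ is assumed.
\item $m\ge\alpha_{2}-\gamma=\frac{n}{p}-\frac{n}{q}+\alpha_{2}-\alpha_{1}$ is precisely the assumed condition on $m$.
\end{itemize}
Theorem \ref{embedingsq=p} allows $1\le p<\infty$, so $q\ge 1$ imposes no further restriction. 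Combining the two displayed inequalities gives the homogeneous embedding.

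For the nonhomogeneous statement I would argue in the same way with the sets $\widehat{R}_{k}$. Hölder on $\widehat{R}_{0}=B_{0}$ costs only a constant, and on $\widehat{R}_{k}$, $k\ge 1$, it costs the factor $2^{kn(\frac{1}{q}-\frac{1}{p})}$. This yields $\|f\|_{K_{q}^{\alpha_{1},r}(\Omega)}\lesssim\|f\|_{K_{p}^{\gamma,r}(\Omega)}$, and the nonhomogeneous half of Theorem \ref{embedingsq=p} then finishes the proof.

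The main point requiring care is the verification of the hypotheses of Theorem \ref{embedingsq=p}, especially the borderline case $m=\alpha_{2}-\gamma$, which is covered by Case 2 of that proof. One should also note that if $\gamma=\alpha_{2}$ the embedding $\dot{K}_{p,m}^{\alpha_{2},r}(\Omega)\hookrightarrow\dot{K}_{p}^{\alpha_{2},r}(\Omega)$ holds trivially. Otherwise the argument is a direct reduction and I expect no real obstacle.
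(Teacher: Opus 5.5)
Your reduction is correct, and it takes a genuinely different and much shorter route than the paper. The paper does not invoke Theorem~\ref{embedingsq=p}. Instead it reruns the full decomposition $\Gamma_{1}+\Gamma_{2}$ with $L^{q}$-norms:
\begin{itemize}
\item For $\Gamma_{1}$ (annuli with $\delta\le 2^{k-2}$), it applies Young's inequality in $L^{p}$ to the representation of Lemma~\ref{Sobolev-rep1}. It then uses H\"older from $L^{q}$ to $L^{p}$ on $R_{k}$, which is exactly your H\"older step done locally.
\item For $\Gamma_{2}$, it bounds the near part $J_{3,k}$ by $2^{km}\sum_{|\beta|=m}\mathcal{M}\big((D^{\beta}f)\chi_{\tilde{R}_{k}\cap\Omega}\big)$ and uses the maximal inequality of Lemma~\ref{Maximal-Inq}; this is where $p>1$ enters.
\item It handles $J_{4,k}^{1}$ by H\"older and Lemma~\ref{lq-inequality}, and the far part $\digamma_{k,\beta}$ by Lemma~\ref{key1} with $q\neq p$.
\end{itemize}

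You instead show that the statement is a corollary of Theorem~\ref{embedingsq=p} together with the elementary inclusion $\dot{K}_{p}^{\gamma,r}(\Omega)\hookrightarrow\dot{K}_{q}^{\alpha_{1},r}(\Omega)$, where $\gamma=\alpha_{1}+\frac{n}{q}-\frac{n}{p}$ and $q<p$. Your check that the hypotheses translate one-to-one into those of Theorem~\ref{embedingsq=p} at weight $\gamma$ is right. It also explains why the hypotheses take exactly this form: passing to $L^{p}$ first loses nothing.

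A small bonus of your route is that the range $m>n$ is covered automatically, since the proof of Theorem~\ref{embedingsq=p} calls on Lemma~\ref{key1 copy(4)} there. The paper's direct proof cites only Lemma~\ref{key1}, whose hypotheses include $m\le n$.

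What the paper's approach buys is a self-contained argument that does not depend on Theorem~\ref{embedingsq=p}. In particular it does not rely on that theorem's borderline Case~2, which your reduction needs when $m=\alpha_{2}-\gamma$. It also uses the same machinery as Theorems~\ref{embeddingsfirst} and~\ref{embedingsp<q}, where $q>p$ and H\"older on annuli goes the wrong way.
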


\begin{proof}
	We use Theorem \ref{density}. Let 
	$f\in C^{\infty}(\Omega)\cap \dot{K}_{p,m}^{\alpha_{2},r}(\Omega)$, and let $\delta$ denote the dimension of the cone $C$. Set 
	\begin{equation*}
	\Gamma_{1} = \sum_{\substack{ k\in\mathbb{Z}  \\ \delta\leq 2^{k-2}}}
	2^{k\alpha_{1}r} \|f\chi_{R_{k}\cap\Omega}\|_{q}^{\,r},\qquad \Gamma_{2} = \sum_{\substack{ k\in\mathbb{Z}  \\ \delta>2^{k-2}}}
	2^{k\alpha_{1}r} \|f\chi_{R_{k}\cap\Omega}\|_{q}^{\,r}.
	\end{equation*}

	Lemma \ref{Sobolev-rep1} yields 
	\begin{align*}
	|f(x)| \lesssim\, & \sum_{|\beta|\leq m-1} |D^{\beta}f| \chi_{\tilde{R}%
		_{k}\cap\Omega} * \chi_{\overline{B(0,\delta)}}(x) 
	+ \sum_{|\beta|=m} |D^{\beta}f| \chi_{\tilde{R}_{k}\cap\Omega} * \chi_{%
		\overline{B(0,\delta)}} \omega_{m,0,0}(x)
	\end{align*}
	for every 
	$
	x\in R_{k}\cap\Omega,  k\in\mathbb{Z},  \delta\leq 2^{k-2},
	$
	where the implicit constant is independent of $k$. By Young's inequality, 
	\begin{align*}
	& \big\| |D^{\beta}f| \chi_{\tilde{R}_{k}\cap\Omega} * \chi_{\overline{%
			B(0,\delta)}} \omega_{m,0,0} \big\|_{p} 
	+ \big\| |D^{\beta}f| \chi_{\tilde{R}_{k}\cap\Omega} * \chi_{%
		\overline{B(0,\delta)}} \big\|_{p} \\
	\lesssim\, & \big\| (D^{\beta}f) \chi_{\tilde{R}_{k}\cap\Omega} \big\|_{p}.
	\end{align*}
	
	Using H\"{o}lder's inequality, we obtain 
	\begin{align*}
	\Gamma_{1} \lesssim\, & \sum_{|\beta|\leq m} \sum_{\substack{ k\in\mathbb{Z} 
			\\ \delta\leq 2^{k-2}}} 2^{(\alpha_{1}+\frac{n}{q}-\frac{n}{p})kr}
	\|(D^{\beta}f)\chi_{R_{k}\cap\Omega}\|_{p}^{\,r} \\
	\lesssim\, & \|f\|_{\dot{K}_{p,m}^{\alpha_{2},r}(\Omega)}^{r},
	\end{align*}
	since 
	$\alpha_{2}+\frac{n}{p} \geq \alpha_{1}+\frac{n}{q}.$
	
	We now estimate $\Gamma_{2}$. Let $J_{1,k}$ and $J_{2,k}$ be defined as in
	Theorem \ref{embeddingsfirst}. Recall that 
	\begin{equation*}
	\sum_{\substack{ k\in\mathbb{Z}  \\ \delta>2^{k+2}}} 2^{k\alpha_{1}r}
	\|J_{1,k}\chi_{R_{k}\cap\Omega}\|_{q}^{\,r} \lesssim \|f\|_{\dot{K}%
		_{p,m}^{\alpha_{2},r}(\Omega)}^{r},
	\end{equation*}
	because 
	$\alpha_{1}>-\frac{n}{q} $ and $ \alpha_{2}<n-\frac{n}{p}.$
	
	Moreover, 
	\begin{align*}
	J_{2,k}(x) =& \sum_{|\beta|=m} \int_{C_{x,2^{k-2}}} \frac{|D^{\beta}f(y)|}{%
		|x-y|^{n-m}} \,dy 
	+ \sum_{|\beta|=m} \int_{B_{x,\delta,k-4}} \frac{|D^{\beta}f(y)|}{%
		|x-y|^{n-m}} \,dy \\
	=:& J_{3,k}(x)+J_{4,k}(x).
	\end{align*}
	
	Observe that 
	\begin{align*}
	J_{3,k}(x) \leq\, & \sum_{|\beta|=m} \sum_{l=-\infty}^{k-2}
	\int_{2^{l-1}\leq |x-y|\leq 2^{l}} \frac{|D^{\beta}f(y)|}{|x-y|^{n-m}} \,dy
	\\
	\leq\, & \sum_{|\beta|=m} \sum_{l=-\infty}^{k-2} 2^{l(m-n)}
	\int_{2^{l-1}\leq |x-y|\leq 2^{l}} |D^{\beta}f(y)| \,dy \\
	\lesssim\, & 2^{km} \sum_{|\beta|=m} \mathcal{M} \big( (D^{\beta}f) \chi_{%
		\tilde{R}_{k}\cap\Omega} \big)(x),
	\end{align*}
	for every $x\in R_{k}\cap\Omega, k\in\mathbb{Z}, \delta>2^{k-2}$, where the
	implicit constant is independent of $k$.
	By H\"{o}lder's inequality and Lemma \ref{Maximal-Inq}, 
	\begin{align*}
	& \sum_{|\beta|=m} \sum_{\substack{ k\in\mathbb{Z}  \\ \delta>2^{k-2}}}
	2^{\alpha_{1}kr} \|J_{3,k}\chi_{R_{k}\cap\Omega}\|_{q}^{\,r} \\
	\lesssim\, & \sum_{|\beta|=m} \sum_{\substack{ k\in\mathbb{Z}  \\ %
			\delta>2^{k-2}}} 2^{(m-\frac{n}{p}+\alpha_{1}+\frac{n}{q})kr}
	\|(D^{\beta}f)\chi_{\tilde{R}_{k}\cap\Omega}\|_{p}^{\,r} \\
	\lesssim\, & \|f\|_{\dot{K}_{p,m}^{\alpha_{2},r}(\Omega)}^{r},
	\end{align*}
	because 
	$m\geq \frac{n}{p}-\frac{n}{q} +\alpha_{2}-\alpha_{1}.$
	
	Next, 
	\begin{align*}
	J_{4,k}(x) =& \sum_{|\beta|=m} \int_{B_{x,2^{k+2},k-4}} \frac{|D^{\beta}f(y)|%
	}{|x-y|^{n-m}} \,dy \\
	& + \sum_{|\beta|=m} \digamma_{k,\beta}(x) \\
	=:& J_{4,k}^{1}(x)+J_{4,k}^{2}(x).
	\end{align*}
	
	By H\"{o}lder's inequality, 
	\begin{align*}
	J_{4,k}^{1}(x) \leq\, & 2^{k(m-n)} \sum_{|\beta|=m} \sum_{i=-\infty}^{k+4}
	\int_{\Omega} |D^{\beta}f(y)| \chi_{R_{i}}(y) \,dy \\
	\lesssim\, & 2^{k(m-n)} \sum_{|\beta|=m} \sum_{i=-\infty}^{k+4} 2^{i(n-\frac{%
			n}{p}-\alpha_{2})} 2^{i\alpha_{2}}
	\|(D^{\beta}f)\chi_{R_{i}\cap\Omega}\|_{p}.
	\end{align*}
	
	Since 
	$m+\alpha_{1}+\frac{n}{q} \geq \frac{n}{p}+\alpha_{2},
	$
	we have $2^{k(m+\alpha_{1}+\frac{n}{q})} \lesssim 2^{k(\frac{n}{p}%
		+\alpha_{2})}$ for all $k\in\mathbb{Z}$ such that $\delta>2^{k-2}$.
	Therefore, 
	\begin{align*}
	2^{k\alpha_{1}} \|J_{4,k}^{1}\chi_{R_{k}\cap\Omega}\|_{q} \lesssim\, &
	\sum_{|\beta|=m} \sum_{i=-\infty}^{k+4} 2^{(i-k)(n-\frac{n}{p}-\alpha_{2})}
	2^{i\alpha_{2}} \|(D^{\beta}f)\chi_{R_{i}\cap\Omega}\|_{p}.
	\end{align*}
	
	Using Lemma \ref{lq-inequality} and the condition $\alpha_{2}<n-\frac{n}{p}$%
	, we obtain 
	\begin{equation*}
	\sum_{\substack{ k\in\mathbb{Z}  \\ \delta>2^{k-2}}} 2^{k\alpha_{1}r}
	\|J_{4,k}^{1}\chi_{R_{k}\cap\Omega}\|_{q}^{\,r} \lesssim \|f\|_{\dot{K}%
		_{p,m}^{\alpha_{2},r}(\Omega)}^{r}.
	\end{equation*}
	
	The estimate for $J_{4,k}^{2}$ follows from Lemma \ref{key1}. This completes
	the proof.
\end{proof}

\begin{remark}
	Theorems \ref{embedingsq=p}, \ref{embeddingsq=infinity}, and \ref%
	{embedingsq<p} were established in \cite[Theorem 3.14]{Dr-Sobolev} under the
	additional assumption 
	$\alpha_{2}<m-\frac{n}{p}.$
\end{remark}

\begin{remark}
	The results established in this paper contribute to a deeper understanding of Herz-type Sobolev spaces on domains and provide useful analytical tools for future research.
	
	Possible directions for further investigation include the study of pointwise multiplication, the Rellich--Kondrachov theorem extension operators, capacities, interpolation inequalities, applications to partial differential equations, and extensions to variable exponent and other generalized function spaces.
\end{remark}


\end{document}